\newtheorem{theorem}{Theorem}[section]
\newtheorem{conjecture}[theorem]{Conjecture}
\newtheorem{definition}[theorem]{Definition}
\newtheorem{question}[theorem]{Question}
\newtheorem{lemma}[theorem]{Lemma}
\newtheorem{corollary}[theorem]{Corollary}
\newtheorem{remark}[theorem]{Remark}
\newtheorem{claim}[theorem]{Claim}
\numberwithin{equation}{section}
\newcommand{\rest}{\restriction}
\newcommand{\dom}{{\rm dom}}
\newcommand{\lh}{{\rm lh}}
\def\a{\alpha}
\def\l{\lambda}
\renewcommand{\models}{\vDash}
\newcommand{\powerset}{{\wp}}
\def\P{{\mathcal{P} }}
\def\Q{{\mathcal{ Q}}}
\def\L{{\rm{L}}}
\def\R{{\mathcal R}}
\def\M{{\mathcal{M}}}
\def\N{{\mathcal{N}}}
\def\F{{\mathcal{F}}}
\def\T {{\mathcal{T}}}
\def\U{{\mathcal{U}}}
\def\F{{\mathcal{F}}}
\def\VT{{\vec{\mathcal{T}}}}
\newcommand{\cA}{\mathcal{A}}
\DeclareMathOperator{\Env}{Env}
\DeclareMathOperator{\bfEnv}{\bf{Env}}
\DeclareMathOperator{\Lp}{Lp}
\DeclareMathOperator{\Ord}{Ord}
\DeclareMathOperator{\OD}{OD}
\DeclareMathOperator{\bfDelta}{\bf{\Delta}}
\DeclareMathOperator{\bfGamma}{\bf{\Gamma}}
\def\a{\alpha}
\def\l{\lambda}
\def\P{{\mathcal{P} }}
\def\Q{{\mathcal{ Q}}}
\def\L{{\rm{L}}}
\def\R{{\mathcal R}}
\def\M{{\mathcal{M}}}
\def\N{{\mathcal{N}}}
\def\F{{\mathcal{F}}}
\def\T {{\mathcal{T}}}
\def\U{{\mathcal{U}}}
\def\G{{\mathcal{G}}}
\def\VT{{\vec{\mathcal{T}}}}
\newcommand{\ins}{\trianglelefteq}
\newcommand{\pins}{\triangleleft}
\newcommand{\crit}{\mathrm{crit}}
\newcommand{\RR}{\mathbb{R}}
\newcommand{\OR}{\text{OR}}
\newcommand{\J}{\mathcal J}
\newcommand{\un}{\cup}
\newcommand{\sub}{\subseteq}
\newcommand{\om}{\omega}
\newcommand{\ZF}{\mathsf{ZF}}
\newcommand{\AD}{\mathsf{AD}}
\newcommand{\ZFC}{\mathsf{ZFC}}
\newcommand{\es}{\mathbb{E}}
\newcommand{\Tt}{\mathcal{T}}
\newcommand{\sats}{\vDash}
\newcommand{\cut}{\backslash}
\newcommand{\Ult}{\mathrm{Ult}}
\newcommand{\inter}{\cap}
\newcommand{\BB}{\mathbb{B}}
\newcommand{\Coll}{\mathrm{Col}}
\renewcommand{\l}{\mathit{l}}
\renewcommand{\OR}{\textit{o}}
\newcommand{\nth}{\text{th}}
\newcommand{\HC}{\mathrm{HC}}
\newcommand{\BBB}{\mathfrak{B}}
\newcommand{\Ll}{\mathcal{L}}
\newcommand{\all}{\forall}
\newcommand{\ex}{\exists}
\newcommand{\Mbar}{\bar{\M}}
\newcommand{\MC}{\mathsf{MC}}
\newcommand{\MFsharp}{\mathfrak{M}}
\newcommand{\Jop}{\J^{\mathrm{m}}}
\newcommand{\Fop}{\mathcal{F}}
\newcommand{\g}{\mathrm{g}}
\newcommand{\param}{\mathfrak{P}}
\newcommand{\gTheta}{\mathsf{G}}
\newcommand{\Los}{\L{}o\'{s}}
\DeclareMathOperator{\HOD}{HOD}
\DeclareMathOperator{\Code}{Code}
\DeclareMathOperator{\Con}{Con}
\titleformat{\section}{\normalsize\centering}{\thesection.}{1em}{}
\titleformat{\subsection}{\normalsize\centering}{\thesubsection.}{1em}{}
\titleformat{\subsubsection}{\normalsize}{\thesubsubsection.}{1em}{}
\numberwithin{equation}{section}
\newcommand*{\TitleFont}{%
      \usefont{\encodingdefault}{\rmdefault}{b}{n}%
      \fontsize{12}{16}%
      \selectfont}
\begin{document}
\title{\TitleFont DETERMINACY FROM STRONG COMPACTNESS OF $\omega_1$}
\renewcommand{\thefootnote}{\fnsymbol{footnote}} 
\footnotetext{\emph{Key words}: strong compactness, supercompactness, core model induction, square sequences, coherent sequences, HOD, large cardinals, determinacy}
\footnotetext{\emph{2010 MSC}: 03E15, 03E45, 03E60}
\date{\today}
\renewcommand{\thefootnote}{\arabic{footnote}}
\author{\fontsize{11}{13}NAM TRANG\footnote{UC Irvine, Irvine, CA, USA. Email: ntrang@math.uci.edu}\\
TREVOR WILSON\footnote{Miami University, Oxford, Ohio, USA. Email: twilson@miamioh.edu}}
\maketitle
\begin{abstract}
In the absence of the Axiom of Choice, the ``small'' cardinal $\omega_1$ can exhibit properties more usually associated with large cardinals, such as strong compactness and supercompactness.
For a local version of strong compactness, we say that $\omega_1$ is $X$-strongly compact (where $X$ is any set) if there is a fine, countably complete measure on $\powerset_{\omega_1}(X)$.
Working in $\mathsf{ZF} + \mathsf{DC}$, we prove that the
$\powerset(\omega_1)$-strong compactness and $\powerset(\mathbb{R})$-strong compactness of $\omega_1$ are equiconsistent with $\mathsf{AD}$ and $\mathsf{AD}_\mathbb{R} + \mathsf{DC}$ respectively, where $\mathsf{AD}$ denotes the Axiom of Determinacy and $\mathsf{AD}_\mathbb{R}$ denotes the Axiom of Real Determinacy.
The $\powerset(\mathbb{R})$-supercompactness of 
$\omega_1$ is shown to be slightly stronger than $\mathsf{AD}_\mathbb{R} + \mathsf{DC}$, but its consistency strength is not computed precisely.
An equiconsistency result at the level of $\mathsf{AD}_\mathbb{R}$ without $\mathsf{DC}$ is also obtained.
\end{abstract}

\newpage
\tableofcontents
\newpage

\thispagestyle{empty}
\thispagestyle{empty}
\section{INTRODUCTION}\label{section:intro}

We assume $\mathsf{ZF} + \mathsf{DC}$
as our background theory unless otherwise stated. (However, we will sometimes weaken our choice principle to a fragment of $\mathsf{DC}$.)
In this setting, it is possible for $\omega_1$ to exhibit ``large cardinal'' properties such as strong compactness.
The appropriate definition of strong compactness is made in terms of measures (ultrafilters) on sets of the form $\powerset_{\omega_1}(X)$.

\begin{definition}
 Let $X$ be an uncountable set.
 A measure $\mu$ on $\powerset_{\omega_1}(X)$ is \emph{countably complete} if it is closed under countable intersections and \emph{fine} if it contains the set $\{\sigma \in \powerset_{\omega_1}(X) : x \in \sigma\}$ for all $x \in X$.
 We say that $\omega_1$ is \emph{$X$-strongly compact} if there is a countably complete fine measure on $\powerset_{\omega_1}(X)$.
\end{definition}

For uncountable sets $X$ and $Y$,
we will often use the elementary fact that if
$\omega_1$ is $X$-strongly compact and there is a surjection from $X$ to $Y$, then $\omega_1$ is $Y$-strongly compact.

In the absence of $\mathsf{AC}$, it may become necessary to consider degrees $X$ of strong compactness that are not wellordered.
The first and most important example is $X=\mathbb{R}$.
The theory $\mathsf{ZFC} + {}$``there is a measurable cardinal'' is equiconsistent with the theory  $\mathsf{ZF} + \mathsf{DC} + {}$``$\omega_1$ is $\mathbb{R}$-strongly compact.'' (For a proof of the forward direction, see Trang \cite{trang2012structureLRmu}.
The reverse direction is proved by noting that $\omega_1$ is $\omega_1$-strongly compact, hence measurable, and considering an inner model $L(\mu)$ where $\mu$ is a measure on $\omega_1$.)

Another way to obtain $\mathbb{R}$-strong compactness of $\omega_1$
that is more relevant to this paper is by the Axiom of Determinacy.
If $\mathsf{AD}$ holds then by Martin's cone theorem, for every set $A \in \powerset_{\omega_1}(\mathbb{R})$ the property $\{x \in \mathbb{R} : x \le_\text{T} d\} \in A$ either holds for a cone of Turing degrees $d$ or fails for a cone of Turing degrees $d$, giving a countably complete fine measure on $\powerset_{\omega_1}(\mathbb{R})$.

Besides $\mathbb{R}$, another relevant degree of strong compactness is the
cardinal $\Theta$, which is defined as the least ordinal that is not a surjective image of $\mathbb{R}$.
In other words, $\Theta$ is the successor of the continuum in the sense of surjections.
If the continuum can be wellordered then this is the same as the successor in the sense of injections (that is, $\mathfrak{c}^+$.)
However in general it can be much larger.
For example, if $\mathsf{AD}$ holds then $\Theta$ is strongly inaccessible by Moschovakis's coding lemma, but on the other hand there is no injection from $\omega_1$ into $\mathbb{R}$.

If $\omega_1$ is $\mathbb{R}$-strongly compact, then pushing forward a measure witnessing this by surjections, we see that
$\omega_1$ is $\lambda$-strongly compact for every uncountable cardinal $\lambda < \Theta$.
In general all we can say is $\Theta \ge \omega_2$ and so this does not given anything beyond measurability of $\omega_1$.
However, it does suggest two marginal strengthings of our hypothesis on $\omega_1$ with the potential to increase the consitency strength beyond measurability.  Namely, we may add the hypothesis ``$\omega_1$ is $\omega_2$-strongly compact'' or the hypothesis ``$\omega_1$ is $\Theta$-strongly compact.''  We consider both strengthenings and obtain equiconsistency results in both cases.

In order to state and obtain sharper results, we first recall some combinatorial consequences of strong compactness.
Let $\lambda$ be an infinite cardinal and let $\vec{C} = (C_\alpha : \alpha \in \lim(\lambda))$ be a sequence such that each set $C_\alpha$ is a club subset of $\alpha$.
The sequence $\vec{C}$ is \emph{coherent} if for all $\beta \in \lim(\lambda)$ and all $\alpha \in \lim(C_\beta)$ we have $C_\alpha = C_\beta \cap \alpha$.
A \emph{thread} for a coherent sequence $\vec{C}$ is a club subset $D \cup \lambda$ such that for all $\alpha \in \lim(D)$ we have $C_\alpha = D \cap \alpha$.
An infinite cardinal $\lambda$ is called \emph{threadable} if every coherent sequence of length $\lambda$ has a thread.\footnote{Threadability of $\lambda$ is also known as $\neg \square(\lambda)$.}

The following result is a well-known consequence of the ``discontinuous ultrapower'' characterization of strong compactness. However, without $\mathsf{AC}$ \Los's theorem may fail for ultrapowers of $V$, so we must verify that the argument can be done using ultrapowers of appropriate inner models instead.

\begin{lemma}\label{lemma:threadability-from-strong-compactness}
 Assume $\mathsf{ZF} + \mathsf{DC} + {}$``$\omega_1$ is $\lambda$-strongly compact'' where
 $\lambda$ is a cardinal of uncountable cofinality.  Then $\lambda$ is threadable.
\end{lemma}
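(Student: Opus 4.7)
The plan is to adapt the standard $\mathsf{ZFC}$ proof that $\lambda$-strong compactness of $\omega_1$ implies threadability of $\lambda$ (when $\cf(\lambda) > \omega$), which runs through the ``discontinuous ultrapower'' characterization. The additional wrinkle, as the excerpt explicitly flags, is that \Los's theorem can fail for ultrapowers of $V$ in the absence of $\mathsf{AC}$, so the whole ultrapower calculation must be carried out inside a $\mathsf{ZFC}$ inner model.

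First I would fix a coherent sequence $\vec{C} = (C_\alpha : \alpha \in \lim(\lambda))$ and a countably complete fine measure $\mu$ on $\powerset_{\omega_1}(\lambda)$. I would then choose a transitive inner model $W \models \mathsf{ZFC}$ containing $\vec{C}$ (for instance $W = L[\vec{C}, A]$ for a suitably chosen $A \subseteq \Ord$, or a $\HOD$-like model parametrized by $\vec{C}$) such that the restriction $\mu \cap W$ is a fine, countably complete $W$-ultrafilter on $\powerset_{\omega_1}(\lambda)^W$ and such that $\powerset_{\omega_1}(\lambda) \cap W$ has $\mu$-measure $1$. Form the ultrapower $j \colon W \to N := \Ult(W, \mu)$ using $W$-functions with domain in $\powerset_{\omega_1}(\lambda)$. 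Countable completeness of $\mu$ plus $\mathsf{DC}$ gives well-foundedness of $N$, and \Los's theorem holds for $W$-definable formulas.

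Next I would exhibit the key ``discontinuity.'' Define $f(\sigma) := \sup(\sigma \cap \lambda)$. Since $\mu$-a.e.\ $\sigma$ is countable and $\cf(\lambda) > \omega$, we get $f(\sigma) < \lambda$ a.e., so $[f]_\mu < j(\lambda)$; and by fineness $\alpha \in \sigma$ a.e.\ for each $\alpha < \lambda$, so $j(\alpha) \leq [f]_\mu$. Hence $\bar{\theta} := \sup j[\lambda] \leq [f]_\mu < j(\lambda)$. In $N$, $\bar{\theta}$ is a limit ordinal below $j(\lambda)$, so $\bar{C} := j(\vec{C})_{\bar{\theta}}$ is a club subset of $\bar{\theta}$, and by elementarity $j(\vec{C})$ is coherent in $N$: for every $\beta \in \lim(\bar{C})$, $j(\vec{C})_\beta = \bar{C} \cap \beta$.

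Finally I would pull $\bar{C}$ back to produce a thread. Let $S := \{\alpha < \lambda : j(\alpha) \in \lim(\bar{C})\}$ and $D := \bigcup \{C_\alpha : \alpha \in S\}$. For $\alpha < \alpha'$ both in $S$, coherence of $j(\vec{C})$ in $N$ gives $j(C_\alpha) = \bar{C} \cap j(\alpha) = j(C_{\alpha'}) \cap j(\alpha)$, hence $C_\alpha = C_{\alpha'} \cap \alpha$ in $V$; the pieces cohere, so $D$ is well-defined. A routine verification, using that $\vec{C}$ is coherent and that for every $\beta < \lambda$ one can pick $\alpha \in S$ above $\beta$, shows that $D$ is closed and that $D \cap \beta = C_\beta$ for every $\beta \in \lim(D)$. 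The remaining point is that $S$ is unbounded in $\lambda$; this is where one uses that $j[\lambda]$ is cofinal in $\bar{\theta}$, that $\lim(\bar{C})$ is club in $\bar{\theta}$ as computed in $N$, and that $\cf^N(\bar{\theta})$ inherits uncountable cofinality from $\cf^V(\lambda) > \omega$ (modulo a small argument about the behavior of $j$ on ordinals of cofinality $\omega$, where $j$ is continuous because $\crit(j) = \omega_1$).

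I expect the main obstacle to be the ultrapower step: arranging an inner model $W$ such that (i) $W \models \mathsf{ZFC}$, (ii) $\vec{C} \in W$, (iii) the $W$-version of $\powerset_{\omega_1}(\lambda)$ is large enough for $\mu$ to restrict to a fine $W$-ultrafilter, and (iv) the function $f$ above lies in $W$. Once this is set up — which is precisely the technical subtlety noted immediately before the lemma's statement — everything else is a routine elementarity-and-coherence computation transplanted from the classical $\mathsf{ZFC}$ argument.
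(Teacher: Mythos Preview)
Your overall skeleton (discontinuity at $\lambda$, pull back a club from $j(\vec{C})_{\sup j[\lambda]}$, take the union of the $C_\alpha$'s) matches the paper exactly. The gap is precisely where you flagged it: the requirement that $\powerset_{\omega_1}(\lambda)\cap W$ have $\mu$-measure $1$ cannot in general be arranged for any $\mathsf{ZFC}$ inner model $W$ containing $\vec{C}$, and the paper explicitly warns against this route. If $\powerset_{\omega_1}(\lambda)\cap W\notin\mu$ then $\mu\cap W$ is not even a filter on $\powerset_{\omega_1}(\lambda)^W$, so the $W$-ultrapower you describe does not get off the ground.

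The paper's fix is not to find such a $W$ but to bypass the need for one. Form the ultrapower $j:V\to\Ult(V,\mu)$ using \emph{all} functions in $V$; this map is well-founded by $\mathsf{DC}$ and countable completeness, and on ordinals it behaves well enough to make sense of $j(\vec{C})$. Now observe that the restriction $j\restriction L[\vec{C}]$ is a fully elementary embedding $L[\vec{C}]\to L[j(\vec{C})]$: the existential step of \Los\ goes through because $L[\vec{C}]$ carries a definable well-ordering, so Skolem witnesses can be chosen uniformly without any choice in $V$. This is exactly the ``ultrapowers of appropriate inner models'' maneuver mentioned just before the lemma. With that in hand, your steps (discontinuity via $[\sigma\mapsto\sup\sigma]_\mu$, set $\gamma=\sup j[\lambda]$, note $j[\lambda]$ is an $\omega$-club in $\gamma$ so meets $\lim(j(\vec{C})_\gamma)$ unboundedly, pull back to $S$, take $\bigcup_{\alpha\in S}C_\alpha$) go through verbatim, and all the coherence checks use only the elementarity of $j\restriction L[\vec{C}]$.
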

\begin{proof}
  Let $\vec{C} = (C_\alpha : \alpha \in \lim(\lambda))$ be a coherent sequence
 such that each set $C_\alpha$ is a club in $\alpha$.
 Let $\mu$ be a countably complete fine measure on $\powerset_{\omega_1}(\lambda)$ and let $j : V \to \Ult(V,\mu)$ be the ultrapower map corresponding to $\mu$.
 The restriction $j \restriction L[\vec{C}]$ is an elementary embedding from $L[\vec{C}]$ to $L[j(\vec{C})]$.
 Note that we must use all functions from $V$ in our ultrapower rather than only using functions from $L[\vec{C}]$
 because the measure $\mu$ might not concentrate on $\powerset_{\omega_1}(\lambda) \cap L[\vec{C}]$.
 Note also that $j$ is discontinuous at $\lambda$:
 for any ordinal $\alpha < \lambda$,
 we have $j(\alpha) < [\sigma \mapsto \sup \sigma]_\mu < j(\lambda)$ where the first inequality holds because $\mu$ is fine and the second inequality holds because $\lambda$ has uncountable cofinality.  
  
 Now the argument continues as usual.
 We define the ordinal $\gamma = \sup j[\lambda]$ and note that $\gamma < j(\lambda)$ and that $j[\lambda]$
 is an $\omega$-club in $\lambda$.\footnote{We seem to need $\mathsf{DC}$ in this argument to see that the ultrapower is wellfounded and in particular that $\sup j[\lambda]$ exists.}
 Therefore the set $j[\lambda] \cap \lim(j(\vec{C})_\gamma)$ is unbounded in $\gamma$,
 so its preimage $S = j^{-1}[\lim(j(\vec{C})_\gamma)]$ is unbounded in $\lambda$.
 Note that the club $C_\alpha$ is an initial segment of $C_\beta$ whenever $\alpha,\beta \in S$ and $\alpha < \beta$;
 this is easy to check using the elementarity of $j \restriction L[\vec{C}]$ and the coherence of $j(\vec{C})$.
 Therefore the union of clubs $\bigcup_{\alpha \in S} C_\alpha$ threads the sequence $\vec{C}$.
\end{proof}

The following lemma is almost an immediate consequence except that we want to weaken the hypothesis $\mathsf{DC}$ a bit.

\begin{lemma}\label{lemma:threadability-from-R-strong-compactness}
 Assume $\mathsf{ZF} + \mathsf{DC}_\mathbb{R} + {}$``$\omega_1$ is $\mathbb{R}$-strongly compact.''
 Let $\lambda < \Theta$ be a cardinal of uncountable cofinality.  Then $\lambda$ is threadable.
\end{lemma}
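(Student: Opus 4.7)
The plan is to reduce the lemma to Lemma~\ref{lemma:threadability-from-strong-compactness} by pushing the hypothesized measure on $\powerset_{\omega_1}(\mathbb{R})$ forward along a surjection $\pi \colon \mathbb{R} \twoheadrightarrow \lambda$ (which exists because $\lambda < \Theta$), and then re-running that argument while verifying that $\mathsf{DC}_\mathbb{R}$ suffices in place of $\mathsf{DC}$. If $\nu$ witnesses the $\mathbb{R}$-strong compactness of $\omega_1$, I would define $\mu$ on $\powerset_{\omega_1}(\lambda)$ by declaring $A \in \mu$ iff $\{\tau : \pi[\tau] \in A\} \in \nu$. Countable completeness transfers directly from $\nu$, and fineness follows because for each $\alpha < \lambda$ one has $\{\tau : \alpha \in \pi[\tau]\} \supseteq \{\tau : x \in \tau\}$ for any $x \in \pi^{-1}(\{\alpha\})$, and the latter is in $\nu$. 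Hence $\omega_1$ is $\lambda$-strongly compact via $\mu$.

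With $\mu$ in hand I would form $\Ult(L[\vec{C}], \mu)$ using all functions from $V$, let $j \colon L[\vec{C}] \to \Ult$ be the induced map, and mimic the argument of Lemma~\ref{lemma:threadability-from-strong-compactness}: fineness of $\mu$ together with $\cf(\lambda) > \omega$ gives the discontinuity $j(\alpha) < [\sigma \mapsto \sup \sigma]_\mu < j(\lambda)$ for every $\alpha < \lambda$; so $\gamma := \sup j[\lambda]$ lies strictly below $j(\lambda)$; $j[\lambda]$ is an $\omega$-club in $\gamma$, forcing $\cf_V(\gamma) > \omega$, whence its intersection with the club $\lim(j(\vec{C})_\gamma)$ is unbounded; coherence of $j(\vec{C})$ in $\Ult$ together with elementarity of $j \restriction L[\vec{C}]$ (which is justified using the definable wellorder of $L[\vec{C}]$ to pick canonical witnesses for existential statements, so no additional choice is needed for \Los) forces $C_\alpha = C_\beta \cap \alpha$ for $\alpha < \beta$ in $S := j^{-1}[\lim(j(\vec{C})_\gamma)]$, and $\bigcup_{\alpha \in S} C_\alpha$ is the desired thread.

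The main obstacle, as flagged in the footnote to Lemma~\ref{lemma:threadability-from-strong-compactness}, is that $\mathsf{DC}$ was used there to secure wellfoundedness of $\Ult$ and hence the existence of $\sup j[\lambda]$. The key to making do with only $\mathsf{DC}_\mathbb{R}$ is that the surjection $\pi$ lets us equivalently present the ultrapower as one by $\nu$, with a general function $f \colon \powerset_{\omega_1}(\lambda) \to L[\vec{C}]$ replaced by $\tau \mapsto f(\pi[\tau])$ indexed by $\powerset_{\omega_1}(\mathbb{R})$. A descending $\in$-chain in $\Ult$ witnessed by a sequence $\langle f_n \rangle$ of such functions then yields $A_n := \{\tau : f_{n+1}(\tau) \in f_n(\tau)\} \in \nu$, with nonempty countable intersection by $\omega$-completeness, and any $\tau$ in it gives a descending $\in$-chain $f_0(\tau) \ni f_1(\tau) \ni \cdots$ in $V$, contradicting foundation. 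The delicate step is extracting the sequence of representatives $\langle f_n \rangle$ from a descending chain of $\mu$-equivalence classes using only $\mathsf{DC}_\mathbb{R}$, and the real-parameterization afforded by $\pi$ is what makes this available.
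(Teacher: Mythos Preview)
Your pushforward of $\nu$ to a fine countably complete measure $\mu$ on $\powerset_{\omega_1}(\lambda)$ is correct, and the rerun of the ultrapower argument from Lemma~\ref{lemma:threadability-from-strong-compactness} is fine \emph{provided} the ultrapower is wellfounded. But your final paragraph, which is supposed to supply this, is only a gesture. You assert that ``the real-parameterization afforded by $\pi$ is what makes this available,'' yet $\pi:\mathbb{R}\twoheadrightarrow\lambda$ gives you no handle on the objects you actually need to choose: a descending chain in $\Ult$ is witnessed by functions $f_n:\powerset_{\omega_1}(\mathbb{R})\to L[\vec{C}]$ (or even just into the ordinals), and there is no surjection of $\mathbb{R}$ onto such functions in general. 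Replacing $f:\powerset_{\omega_1}(\lambda)\to L[\vec{C}]$ by $\tau\mapsto f(\pi[\tau])$ only reparameterizes the \emph{domain}; it does nothing to code the function itself by a real. So $\mathsf{DC}_\mathbb{R}$ does not obviously let you extract the sequence $\langle f_n\rangle$ from a descending chain of equivalence classes, and the wellfoundedness step remains unproved.

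The paper sidesteps this difficulty entirely by passing to an inner model where full $\mathsf{DC}$ holds. Given the coherent sequence $\vec{C}$, the surjection $f:\mathbb{R}\to\lambda$, and the measure $\mu$ on $\powerset_{\omega_1}(\mathbb{R})$, one works inside $M=L(\mathbb{R})[f,\mu,C]$ (with $\mu$ and $C=\{(\alpha,\beta):\alpha\in C_\beta\}$ used as predicates). All the hypotheses---$\mathsf{DC}_\mathbb{R}$, the fineness and countable completeness of $\mu\cap M$, and the fact that $\vec{C}$ is coherent---go down to $M$, and ``$\vec{C}$ has a thread'' comes back up. In $M$ every set is a surjective image of $\mathbb{R}\times\alpha$ for some ordinal $\alpha$, so by the standard argument $\mathsf{DC}_\mathbb{R}$ upgrades to full $\mathsf{DC}$ there; Lemma~\ref{lemma:threadability-from-strong-compactness} then applies verbatim inside $M$. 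This inner-model reduction is the missing idea in your attempt.
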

\begin{proof}
 Let $\vec{C}$ be a coherent sequence of length $\lambda$.
 First, note that we may pass to an inner model containing $\vec{C}$ where $\mathsf{DC}$ holds in addition to our other hypotheses.
 Namely, let $f: \mathbb{R} \to \lambda$ be a surjection,
 let $\mu$ be a fine, countably complete measure on $\powerset_{\omega_1}(\mathbb{R})$,
 let $C = \{(\alpha,\beta) : \alpha \in C_\beta\}$, and consider the model
 $M = L(\mathbb{R})[f,\mu,C]$, where the square brackets indicate that we are constructing from $f$, $\mu$ and $C$ as predicates.
 (In the case of $\mu$, this distinction is important: we are not putting all elements of $\mu$ into the model.)

 It can be easily verified that all of our hypotheses are downward absolute to the model $M$, and that our desired conclusion that $\vec{C}$ has a thread is upward absolute from $M$ to $V$.  In the model $M$ every set is a surjective image of $\mathbb{R}\times \alpha$ for some ordinal $\alpha$, so $\mathsf{DC}$ follows from $\mathsf{DC}_\mathbb{R}$ by a standard argument.
 Moreover, $\omega_1$ is $\lambda$-strongly compact in $M$ by pushing forward the measure $\mu$ (restricted to $M$) by the surjection $f$, so the desired result follows from Lemma \ref{lemma:threadability-from-strong-compactness}.
\end{proof}

A further combinatorial consequence of strong compactness of $\omega_1$ is the failure of Jensen's square principle $\square_{\omega_1}$.  In fact
$\neg \square_{\omega_1}$ follows from the assumption that $\omega_2$ is threadable or singular (note that successor cardinals may be singular in the absence of $\mathsf{AC}$.)

\begin{lemma}\label{lemma:failure-of-square-from-threadability}
 Assume $\mathsf{ZF}$.
 If $\omega_2$ is singular or threadable, then $\neg \square_{\omega_1}$.
\end{lemma}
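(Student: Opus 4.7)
I will prove the contrapositive: assume $\square_{\omega_1}$ holds, witnessed by a coherent sequence $\vec C=(C_\alpha:\alpha\in\lim(\omega_2))$ with $\operatorname{ot}(C_\alpha)\le\omega_1$, and derive that $\omega_2$ is neither singular nor threadable. The common ingredient is a uniform bound on threads: any thread $D$ for $\vec C$ satisfies $\operatorname{ot}(D)\le\omega_1+\omega$. Indeed, if $\alpha$ is the $\xi$-th element of $D$ with $\xi$ a limit ordinal, then $\alpha\in\lim(D)$ forces $C_\alpha=D\cap\alpha$, so $\xi=\operatorname{ot}(C_\alpha)\le\omega_1$; hence every limit ordinal below $\operatorname{ot}(D)$ is at most $\omega_1$, giving the bound.

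For the threadable case, $\vec C$ has a thread $D$ with $|D|\le\omega_1$ by the bound. Since $D$ is cofinal in $\omega_2$, this gives $\operatorname{cf}(\omega_2)\le\omega_1<\omega_2$, so $\omega_2$ is singular; the threadable case thus reduces to the singular case.

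For the singular case, which is the main content, let $\mu=\operatorname{cf}(\omega_2)\in\{\omega,\omega_1\}$ and fix a continuous strictly increasing cofinal sequence $(\alpha_\xi:\xi<\mu)$ in $\omega_2$ consisting of limit ordinals. My plan is to manufacture a thread for $\vec C$ that violates the bound. Inductively I would pick $\gamma_\xi>\alpha_\xi$ so that $\gamma_\xi\in\lim(C_{\gamma_{\xi'}})$ for all $\xi<\xi'<\mu$; by coherence this nests the $C_{\gamma_\xi}$'s as initial segments, and $D=\bigcup_\xi C_{\gamma_\xi}$ is a club in $\omega_2$ threading $\vec C$, with $\operatorname{ot}(D)$ the supremum of the strictly increasing sequence $(\operatorname{ot}(C_{\gamma_\xi}))_{\xi<\mu}$. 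When $\mu=\omega_1$, this supremum is $\omega_1$, and the final closure of $D$ in $\omega_2$ forces an extra limit-of-limits beyond the bound; when $\mu=\omega$ one needs a slightly more delicate argument, selecting the $\gamma_\xi$'s so that $\operatorname{ot}(C_{\gamma_\xi})\to\omega_1$ and then combining $D$ with a finite tail of $\omega_2$-cofinal points to exceed $\omega_1+\omega$, or alternatively invoking a walks-on-ordinals-style injection $\omega_2\hookrightarrow[\omega_1]^{<\omega}$ to directly contradict $|\omega_2|>\omega_1$.

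The main obstacle is the inductive construction of the $\gamma_\xi$'s: given $\gamma_\xi$, there is no automatic $\beta>\alpha_{\xi+1}$ with $\gamma_\xi\in\lim(C_\beta)$. My strategy is to press down a regressive $\vec C$-derived function (for instance $\beta\mapsto\sup(C_\beta\cap\gamma_\xi)$) on a cofinal slice of $\omega_2$ above $\alpha_{\xi+1}$ to obtain cofinally many candidate $\beta$'s, and then invoke $\mathsf{DC}$ to extract the chain. As a fallback, passing to the inner model $L[\vec C]$, which satisfies $\mathsf{ZFC}$, reduces the problem to the classical $\mathsf{ZFC}$ fact that $\square_{\omega_1}$ implies $\omega_2$ is regular, provided one verifies the absoluteness $\omega_2^V=\omega_2^{L[\vec C]}$; this absoluteness step, which uses that $\vec C$ already encodes enough of the cardinal structure of $\omega_2^V$, is where most of the difficulty of the singular case is concentrated.
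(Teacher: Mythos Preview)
Your treatment of the threadable case is correct and matches the paper's. The singular case, however, has a genuine gap.

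Your plan there is to construct a thread $D=\bigcup_\xi C_{\gamma_\xi}$ whose order type exceeds the bound $\omega_1+\omega$. But this cannot work: each $C_{\gamma_\xi}$ has order type $\le\omega_1$ by the $\square_{\omega_1}$ condition, so $\operatorname{ot}(D)=\sup_\xi\operatorname{ot}(C_{\gamma_\xi})\le\omega_1$. A thread of order type $\omega_1$ does not violate the bound $\omega_1+\omega$; it merely confirms that $\operatorname{cf}(\omega_2)\le\omega_1$, which is what you already assumed. Your remark about ``the final closure of $D$ in $\omega_2$'' forcing extra order type does not make sense here, since $D$ is already cofinal in $\omega_2$ and there is nothing further to close. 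Your fallback arguments also fail: the lemma assumes only $\mathsf{ZF}$, so $\mathsf{DC}$ is unavailable; and passing to $L[\vec C]$ does not help because the absoluteness $\omega_2^V=\omega_2^{L[\vec C]}$ is precisely what breaks when $\omega_2^V$ is singular (the inner model, satisfying $\mathsf{ZFC}$, must think its own $\omega_2$ is regular, so if $\omega_2^V$ is singular it cannot be $\omega_2^{L[\vec C]}$).

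The paper's argument for the singular case is entirely different and much more direct. It ignores coherence and uses only that each $C_\alpha$ has order type $\le\omega_1$: fix a cofinal $C_{\omega_2}\subset\omega_2$ of order type $\le\omega_1$, and recursively define surjections $f_\alpha:\omega_1\to\alpha$ for $\alpha\in[\omega_1,\omega_2]$, handling a limit $\alpha$ by splicing together the $f_\beta$ for $\beta\in C_\alpha$ via a fixed pairing $\omega_1\leftrightarrow\omega_1\times\omega_1$. This recursion requires no choice beyond what the sequence $\vec C$ (and the single chosen $C_{\omega_2}$) already provides, and it outputs a surjection $f_{\omega_2}:\omega_1\to\omega_2$, an immediate contradiction.
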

\begin{proof}
 Suppose toward a contradiction that $\omega_2$ is singular or threadable and we have a $\square_{\omega_1}$-sequence
 $(C_\alpha : \alpha \in \lim(\omega_2))$.
 If $\omega_2$ is singular, we do not need coherence of the sequence to reach a contradiction. Take any cofinal set $C_{\omega_2}$ in $\omega_2$ of order type $\le \omega_1$ and
 recursively define a sequence of functions $(f_\alpha : \alpha \in [\omega_1,\omega_2])$ such that each function $f_\alpha$ is a surjection from $\omega_1$ onto $\alpha$, using our small cofinal sets $C_\alpha$ at limit stages.
 Then the function $f_{\omega_2}$ is a surjection from $\omega_1$ onto $\omega_2$, a contradiction.
 On the other hand, if $\omega_2$ is regular and threadable, take a thread $C_{\omega_2}$ through the square sequence.  Then by the usual argument the order type of $C_{\omega_2}$ is at most $\omega_1 + \omega$, contradicting the regularity of $\omega_2$.
\end{proof}

Now we can state our equiconsistency results and prove their easier directions.

\begin{theorem}\label{theorem:AD-equiconsistency}
The following theories are equiconsistent:
\begin{enumerate}
\item\label{item-AD} $\mathsf{ZF} + \mathsf{DC} + \mathsf{AD}$.


\item\label{item-P-omega-1-strongly-compact} $\mathsf{ZF} + \mathsf{DC} + {}$``$\omega_1$ is $\powerset(\omega_1)$-strongly compact.''

\item\label{item-omega-2-strongly-compact} $\mathsf{ZF} + \mathsf{DC} + {}$``$\omega_1$ is $\mathbb{R}$-strongly compact and $\omega_2$-strongly compact.''

\item\label{item-not-square-omega-1}
$\mathsf{ZF} + \mathsf{DC} + {}$``$\omega_1$ is $\mathbb{R}$-strongly compact and $\neg\square_{\omega_1}$.''

\end{enumerate}
\end{theorem}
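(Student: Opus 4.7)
My plan is to establish the cycle of implications $(1) \Rightarrow (2) \Rightarrow (3) \Rightarrow (4)$ for the easier ``upper bound'' directions, which give that the consistency of each item implies the consistency of the next, and then to close the loop with the reverse direction deriving $\mathrm{Con}((1))$ from (4), which I expect to be the main technical obstacle and the central content of the rest of the paper.

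For $(1) \Rightarrow (2)$, I would pass to $L(\mathbb{R})$ inside a model of $\mathsf{ZF} + \mathsf{DC} + \mathsf{AD}$, where $\mathsf{AD}$ and $\mathsf{DC}$ still hold. There Moschovakis's coding lemma provides a surjection $f \colon \mathbb{R} \twoheadrightarrow \mathcal{P}(\omega_1)$ (each subset of $\omega_1$ is coded by a real, and the codes can be enumerated by reals). The pushforward of Martin's cone measure on $\mathcal{P}_{\omega_1}(\mathbb{R})$ along $\sigma \mapsto f[\sigma]$ is then a fine, countably complete measure on $\mathcal{P}_{\omega_1}(\mathcal{P}(\omega_1))$, establishing (2) in $L(\mathbb{R})$.

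For $(2) \Rightarrow (3)$, I would use the elementary fact noted after the definition that $X$-strong compactness propagates along surjections. Both $\mathbb{R}$ and $\omega_2$ are surjective images of $\mathcal{P}(\omega_1)$ in $\mathsf{ZF}$: for $\mathbb{R}$ by a trivial projection (via $\mathbb{R} \hookrightarrow \mathcal{P}(\omega) \hookrightarrow \mathcal{P}(\omega_1)$); for $\omega_2$ by sending each $A \subseteq \omega_1$ to the order type of the binary relation on $\omega_1$ coded by $A$ when this is a wellorder of type $<\omega_2$, and to $0$ otherwise. For $(3) \Rightarrow (4)$, if $\omega_2$ is regular then it has uncountable cofinality and is threadable by Lemma~\ref{lemma:threadability-from-strong-compactness}; if $\omega_2$ is singular no further argument is needed. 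In either case Lemma~\ref{lemma:failure-of-square-from-threadability} delivers $\neg \square_{\omega_1}$, and $\mathbb{R}$-strong compactness is carried over verbatim.

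The hard direction is the reverse: given $\mathsf{ZF} + \mathsf{DC} + {}$``$\omega_1$ is $\mathbb{R}$-strongly compact and $\neg \square_{\omega_1}$'', one must produce an inner model of $\mathsf{AD}$. This is presumably accomplished via a core model induction adapted to the choiceless setting, in which the failure of $\square_{\omega_1}$ serves as the key combinatorial obstruction preventing the induction from stalling below a mouse rich enough to yield $\mathsf{AD}$ in $L(\mathbb{R})$; this step, and the verification that the relevant mouse operators propagate in the absence of full $\mathsf{AC}$, is where I expect the deep work of the paper to lie.
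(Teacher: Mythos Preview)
Your proposal is correct and follows essentially the same approach as the paper: direct implications $(1)\Rightarrow(2)\Rightarrow(3)\Rightarrow(4)$ via Martin's measure plus the coding lemma, surjections from $\powerset(\omega_1)$, and Lemmas~\ref{lemma:threadability-from-strong-compactness} and~\ref{lemma:failure-of-square-from-threadability}, with the reverse direction $\Con(4)\Rightarrow\Con(1)$ deferred to the core model induction argument later in the paper. The only cosmetic difference is that the paper does not pass to $L(\mathbb{R})$ for $(1)\Rightarrow(2)$---the argument works directly in any model of $\mathsf{ZF}+\mathsf{DC}+\mathsf{AD}$---but your detour is harmless.
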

\begin{proof}
 \eqref{item-AD} $\implies$ \eqref{item-P-omega-1-strongly-compact}:
 Under $\mathsf{AD}$, Martin's cone theorem implies that $\omega_1$ is $\mathbb{R}$-strongly compact.
 There is a surjection from $\mathbb{R}$ onto $\powerset(\omega_1)$ by 
 Moschovakis's coding lemma, so $\omega_1$ is $\powerset(\omega_1)$-strongly compact as well.

 \eqref{item-P-omega-1-strongly-compact} $\implies$ \eqref{item-omega-2-strongly-compact}:
 This follows from the existence of surjections from $\powerset(\omega_1)$ onto $\mathbb{R}$ and $\omega_2$.
 
 \eqref{item-omega-2-strongly-compact} $\implies$ \eqref{item-not-square-omega-1}:
 This follows from Lemmas \ref{lemma:threadability-from-strong-compactness} and \ref{lemma:failure-of-square-from-threadability}.
 
 $\Con \eqref{item-not-square-omega-1} \implies \Con \eqref{item-AD}$:
 In a later section, we will show that $\eqref{item-not-square-omega-1} \implies \mathsf{AD}^{L(\mathbb{R})}$.
\end{proof}

Moving up the consistency strength hierarchy, the next natural target for equiconsistency is the theory
$\sf{ZF} + \sf{AD}_\mathbb{R}$. Here $\sf{AD}_\mathbb{R}$ denotes the Axiom of Determinacy for real games, which has higher consistency strength than $\mathsf{AD}$ and cannot hold in $L(\mathbb{R})$.
To get a model of $\mathsf{AD}_\mathbb{R}$ we will need to augment our hypothesis somehow.

The consistency strength of $\mathsf{AD}_\mathbb{R}$, the Axiom of Real Determinacy, is sensitive to $\mathsf{DC}$, so for our next result we must weaken $\mathsf{DC}$ somewhat.
(By contrast, the theory $\mathsf{ZF} + \mathsf{DC} + \mathsf{AD}$ is equiconsistent with $\mathsf{ZF} + \mathsf{AD}$ by a theorem of Kechris.)  By $\mathsf{DC}_{\powerset({\omega_1})}$ we will denote the fragment of $\mathsf{DC}$ that allows us to choose $\omega$-sequences of subsets of $\omega_1$.

\begin{theorem}\label{theorem:ADR-equiconsistency}
The following theories are equiconsistent:
\begin{enumerate}
\item\label{item-ADR}
$\mathsf{ZF} + \mathsf{AD}_\mathbb{R}$.

\item\label{item-R-strongly-compact-and-theta-singular}
$\mathsf{ZF} + \mathsf{DC}_{\powerset(\omega_1)} + {}$``$\omega_1$ is $\mathbb{R}$-strongly compact and $\Theta$ is singular.''
\end{enumerate}
\end{theorem}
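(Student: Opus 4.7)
The strategy is to treat the two directions separately, with $\Con(\ref{item-ADR}) \Rightarrow \Con(\ref{item-R-strongly-compact-and-theta-singular})$ being essentially routine and the reverse requiring a core model induction extending the one implicit in Theorem \ref{theorem:AD-equiconsistency}. For the forward direction, suppose $V \models \mathsf{ZF} + \mathsf{AD}_\mathbb{R}$. Martin's cone theorem applied to reals modulo Turing equivalence yields a fine countably complete measure $\mu$ on $\powerset_{\omega_1}(\mathbb{R})$, and a theorem of Solovay gives a cofinal map $f : \omega \to \Theta$. Form the inner model $N = L(\powerset(\omega_1))[\mu, f]$, treating $\mu$ and $f$ as predicates so that only their intersections with $N$ enter the construction. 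Then $\mathbb{R}$-strong compactness of $\omega_1$ persists in $N$ via $\mu \cap N$, singularity of $\Theta^N \le \Theta^V$ persists via $f$, and $\mathsf{DC}_{\powerset(\omega_1)}$ holds in $N$ because every set of $N$ is a surjective image of $\powerset(\omega_1) \times \alpha$ for some ordinal $\alpha$, exactly as in the proof of Lemma \ref{lemma:threadability-from-R-strong-compactness}.

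For the reverse direction, assume \eqref{item-R-strongly-compact-and-theta-singular}. The fragment $\mathsf{DC}_{\powerset(\omega_1)}$ should suffice to carry out Lemmas \ref{lemma:threadability-from-strong-compactness}--\ref{lemma:failure-of-square-from-threadability} for coherent sequences of length $\omega_2$, because the wellfoundedness of the ultrapower and the derivation of a contradiction from $\square_{\omega_1}$ only involve $\omega$-sequences of subsets of $\omega_1$. Hence $\neg \square_{\omega_1}$ holds, and the later-section work invoked in Theorem \ref{theorem:AD-equiconsistency} delivers $\mathsf{AD}^{L(\mathbb{R})}$. To climb from $\mathsf{AD}^{L(\mathbb{R})}$ up to $\mathsf{AD}_\mathbb{R}$, one runs a hod-mouse-style core model induction along the Solovay sequence, constructing at each successor stage a hod premouse for the next pointclass with iterability and branch condensation derived from (restrictions of) the fine measure $\mu$. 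Singularity of $\Theta$ is precisely the hypothesis needed to ensure that this induction reaches a Solovay-sequence limit of countable cofinality, rather than stabilising at a regular closure point; at such a limit, standard arguments yield a model of $\mathsf{AD}_\mathbb{R}$.

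The main obstacle is twofold. First, the core model induction must be executed with only $\mathsf{DC}_{\powerset(\omega_1)}$; the workaround, following the pattern of Lemma \ref{lemma:threadability-from-R-strong-compactness}, will be to pass at each stage of the induction to an inner model of the form $L(A)[\mu,\ldots]$ where $A$ codes the current pointclass, inside which full $\mathsf{DC}$ holds because every set is a surjective image of $A \times \Ord$. Second, the $\mathbb{R}$-strong compactness measure must actually drive the hod mouse construction past $\mathsf{AD}^{L(\mathbb{R})}$: for each $\theta$ in the Solovay sequence, one needs a fine countably complete measure on $\powerset_{\omega_1}$ of the $\theta$-th pointclass, which should be obtainable by pushing $\mu$ forward along a surjection from $\mathbb{R}$ (available because $\theta < \Theta$). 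The real delicacy lies in piecing these measures together coherently across the Solovay sequence and in showing that singularity of $\Theta$ forces the induction to reach $\mathsf{AD}_\mathbb{R}$ rather than terminating earlier; this is the crux of the argument, and is where the bulk of the later sections of the paper should be spent.
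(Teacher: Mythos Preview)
The forward direction contains a real error. You write that under $\mathsf{AD}_\mathbb{R}$, ``a theorem of Solovay gives a cofinal map $f : \omega \to \Theta$.'' This is false: Solovay's result is that $\Con(\mathsf{ZF} + \mathsf{AD}_\mathbb{R})$ implies $\Con(\mathsf{ZF} + \mathsf{AD}_\mathbb{R} + \cof(\Theta) = \omega)$, not that $\mathsf{AD}_\mathbb{R}$ outright implies $\cof(\Theta) = \omega$. Indeed, $\mathsf{AD}_\mathbb{R} + \mathsf{DC}$ implies $\cof(\Theta) > \omega$, so there are models of $\mathsf{AD}_\mathbb{R}$ admitting no such $f$. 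The correct move is to first pass (by Solovay) to a model of $\mathsf{AD}_\mathbb{R} + {}$``$\Theta$ is singular'' and argue there.

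Once you are in such a model, your passage to $N = L(\powerset(\omega_1))[\mu,f]$ is unnecessary and risky: you would need to check that $\Theta^N$ is still singular in $N$, which is not immediate since $\Theta^N$ may differ from $\Theta^V$. The paper avoids this entirely by observing that in any model of $\mathsf{AD}_\mathbb{R} + {}$``$\Theta$ is singular,'' all of \eqref{item-R-strongly-compact-and-theta-singular} already holds: Martin's measure gives $\mathbb{R}$-strong compactness of $\omega_1$, uniformization under $\mathsf{AD}_\mathbb{R}$ gives $\mathsf{DC}_\mathbb{R}$, and the coding lemma produces a surjection $\mathbb{R} \to \powerset(\omega_1)$, which upgrades $\mathsf{DC}_\mathbb{R}$ to $\mathsf{DC}_{\powerset(\omega_1)}$. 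No inner-model descent is needed.

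For the reverse direction your outline is broadly compatible with the paper's, but you misidentify the mechanism by which singularity of $\Theta$ is used. It is not used to force the Solovay sequence of the target model to have limit length of countable cofinality. Rather, at each stage of the induction (passing from a maximal model of $\Theta = \theta_\Sigma$ to a model of $\Theta > \theta_\Sigma$), singularity of $\Theta^V$ combined with $\mathbb{R}$-strong compactness yields a surjection $\mathbb{R} \to \bfEnv(\Gamma)$ for the relevant inductive-like pointclass $\Gamma$, hence $\bfEnv(\Gamma)$-strong compactness of $\omega_1$, hence a self-justifying system sealing the envelope. This is what drives each successor step; the limit length of the Solovay sequence of $\Omega^*$ then follows because the induction never stalls at a successor.
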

\begin{proof}
 $\Con\eqref{item-ADR} \implies \Con\eqref{item-R-strongly-compact-and-theta-singular}$:
 By Solovay \cite{solovay1978independence},
 if $\mathsf{ZF} + \mathsf{AD}_\mathbb{R}$ is consistent then so is
 $\mathsf{ZF} + \mathsf{AD}_\mathbb{R} + {}$``$\Theta$ is singular.'' (In particular Solovay showed that the cofinality of $\Theta$ can be countable, which implies the failure of $\mathsf{DC}$.)
 Under $\mathsf{AD}_\mathbb{R}$  we have that $\omega_1$ is $\mathbb{R}$-strongly compact by Martin's measure
 (this just follows from $\mathsf{AD}$)
 and we have $\mathsf{DC}_\mathbb{R}$ (this follows from uniformization for total relations on $\mathbb{R}$.)
 Moreover there is a surjection from $\mathbb{R}$ to $\powerset(\omega_1)$ by the coding lemma, so $\mathsf{DC}_{\mathbb{R}}$ can be strengthened to $\mathsf{DC}_{\powerset(\omega_1)}$.

 $\Con\eqref{item-R-strongly-compact-and-theta-singular} \implies \Con \eqref{item-ADR}$:
 In a later section, we will show that if statement \eqref{item-R-strongly-compact-and-theta-singular} holds, then
 statement \eqref{item-ADR} holds in an inner model of the form $L(\Omega^*, \mathbb{R})$ where $\Omega^* \subset \powerset(\mathbb{R})$.
 Note that statement \eqref{item-R-strongly-compact-and-theta-singular} implies that $\omega_2$ is either singular (if $\omega_2 = \Theta$)
 or threadable (if $\omega_2 < \Theta$, by Lemma \ref{lemma:threadability-from-R-strong-compactness}) so in either case we have 
 $\neg \square_{\omega_1}$ by Lemma \ref{lemma:failure-of-square-from-threadability}.
 Therefore we can make some use of the argument for $\Con \eqref{item-not-square-omega-1} \implies \Con \eqref{item-AD} $ of Theorem \ref{theorem:AD-equiconsistency} here, once we check that $\mathsf{DC}_{\powerset(\omega_1)}$ suffices in place of $\mathsf{DC}$ for this argument.
\end{proof}

Adding back full $\mathsf{DC}$, we will obtain an equiconsistency result at a higher level.

\begin{theorem}\label{theorem:ADR-DC-equiconsistency}
 The following theories are equiconsistent:
 \begin{enumerate}
  \item\label{item-ADR-DC}
  $\mathsf{ZF} + \mathsf{DC} + \mathsf{AD}_\mathbb{R} $.

  \item\label{item-PR-strongly-compact-and-DC}
  $\mathsf{ZF} + \mathsf{DC} + {}$``$\omega_1$ is $\powerset(\mathbb{R})$-strongly compact.''

  \item\label{item-R-strongly-compact-and-Theta-strongly-compact}
  $\mathsf{ZF} + \mathsf{DC} + {}$``$\omega_1$ is $\mathbb{R}$-strongly compact and $\Theta$-strongly compact.''

  \item\label{item-R-strongly-compact-and-theta-singular-and-DC}
  $\mathsf{ZF} + \mathsf{DC} + {}$``$\omega_1$ is $\mathbb{R}$-strongly compact and $\Theta$ is singular.''
 \end{enumerate}
\end{theorem}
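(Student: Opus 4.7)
The plan is to follow the template of Theorems \ref{theorem:AD-equiconsistency} and \ref{theorem:ADR-equiconsistency}: establish the chain of direct implications
\eqref{item-ADR-DC} $\Rightarrow$ \eqref{item-PR-strongly-compact-and-DC} $\Rightarrow$ \eqref{item-R-strongly-compact-and-Theta-strongly-compact} $\Rightarrow$ \eqref{item-R-strongly-compact-and-theta-singular-and-DC}
inside $\mathsf{ZF}+\mathsf{DC}$, and defer the reverse consistency implication
$\Con\eqref{item-R-strongly-compact-and-theta-singular-and-DC} \Rightarrow \Con\eqref{item-ADR-DC}$
to the core-model-induction portion of the paper, which closes the cycle.

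For \eqref{item-ADR-DC} $\Rightarrow$ \eqref{item-PR-strongly-compact-and-DC} I would invoke the theorem of Woodin that under $\mathsf{AD}_\mathbb{R} + \mathsf{DC}$, $\omega_1$ is $X$-supercompact for every set $X$; specialising to $X = \powerset(\mathbb{R})$ gives the conclusion. For \eqref{item-PR-strongly-compact-and-DC} $\Rightarrow$ \eqref{item-R-strongly-compact-and-Theta-strongly-compact} I would use the obvious surjection $\powerset(\mathbb{R}) \twoheadrightarrow \mathbb{R}$ together with a surjection $\powerset(\mathbb{R}) \twoheadrightarrow \Theta$ that sends any subset of $\mathbb{R}$ coding a prewellordering of $\mathbb{R}$ (via a fixed bijection $\mathbb{R}^2 \leftrightarrow \mathbb{R}$) to its order type and every other subset to $0$, and push forward the witnessing measure on $\powerset_{\omega_1}(\powerset(\mathbb{R}))$ along each surjection.

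For \eqref{item-R-strongly-compact-and-Theta-strongly-compact} $\Rightarrow$ \eqref{item-R-strongly-compact-and-theta-singular-and-DC} I would first observe that $\mathsf{DC}$ forces $\cof(\Theta) > \omega$, since otherwise $\Theta$ would be a countable union of surjective images of $\mathbb{R}$ and hence itself a surjective image of $\mathbb{R}$, contradicting its definition. Lemma \ref{lemma:threadability-from-strong-compactness} applied at $\lambda = \Theta$ then shows that $\Theta$ is threadable, in direct analogy with the step \eqref{item-omega-2-strongly-compact} $\Rightarrow$ \eqref{item-not-square-omega-1} of Theorem \ref{theorem:AD-equiconsistency}. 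The remaining task, upgrading ``threadable'' to ``singular'' at $\Theta$, is the delicate point of this direction: one would combine the threadability of $\Theta$ itself with the threadability of every cardinal of uncountable cofinality below $\Theta$ (via Lemma \ref{lemma:threadability-from-R-strong-compactness}) to rule out the possibility that $\Theta$ is regular.

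The main obstacle will be the reverse consistency implication $\Con\eqref{item-R-strongly-compact-and-theta-singular-and-DC} \Rightarrow \Con\eqref{item-ADR-DC}$. As in Theorem \ref{theorem:ADR-equiconsistency} this will be established by core model induction, producing an inner model of the form $L(\Omega^*, \mathbb{R})$ satisfying $\mathsf{AD}_\mathbb{R}$. The additional difficulty here, beyond Theorem \ref{theorem:ADR-equiconsistency}, is to verify that full $\mathsf{DC}$ (rather than merely $\mathsf{DC}_{\powerset(\omega_1)}$) holds in the resulting inner model. This will require a more careful choice-preservation analysis at each stage of the induction, and is where the bulk of the technical work in the later sections is expected to concentrate.
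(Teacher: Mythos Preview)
Your proposed chain $(1)\Rightarrow(2)\Rightarrow(3)\Rightarrow(4)$ of direct implications does not go through, and the paper in fact does \emph{not} argue this way.

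First, the step $(1)\Rightarrow(2)$: there is no theorem of Woodin asserting that $\mathsf{AD}_\mathbb{R}+\mathsf{DC}$ implies $\omega_1$ is $X$-supercompact for every set $X$. What Woodin proved is that under $\mathsf{AD}+\mathsf{DC}_\mathbb{R}$ there is a normal fine measure on $\powerset_{\omega_1}(\lambda)$ for each $\lambda<\Theta$, and under $\mathsf{AD}_\mathbb{R}$ one on $\powerset_{\omega_1}(\mathbb{R})$; neither gives a measure on $\powerset_{\omega_1}(\powerset(\mathbb{R}))$ in general. Indeed, the paper notes in its final section that full supercompactness of $\omega_1$ has consistency strength at the level of a proper class of Woodin limits of Woodins, far beyond $\mathsf{AD}_\mathbb{R}+\mathsf{DC}$. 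The paper instead proves only $\Con(1)\Rightarrow\Con(2)$, by passing to a \emph{minimal} model of $\mathsf{AD}_\mathbb{R}+\mathsf{DC}$, where Solovay's analysis gives $\cof(\Theta)=\omega_1$; one then writes $\powerset(\mathbb{R})$ as an $\omega_1$-increasing union of Wadge-initial segments $\Gamma_\alpha$, takes the unique normal fine measure $\mu_\alpha$ on each $\powerset_{\omega_1}(\Gamma_\alpha)$, and integrates these against a measure on $\omega_1$ to assemble a fine countably complete measure on $\powerset_{\omega_1}(\powerset(\mathbb{R}))$.

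Second, the step $(3)\Rightarrow(4)$: you cannot upgrade ``$\Theta$ is threadable'' to ``$\Theta$ is singular'' from the hypotheses of $(3)$. Threadable regular cardinals are perfectly possible (weakly compact cardinals, for instance, are threadable), and nothing in the hypothesis rules out $\Theta$ being regular. The paper does not attempt this; it treats $(3)$ and $(4)$ as parallel hypotheses and proves $\Con(3)\Rightarrow\Con(1)$ and $\Con(4)\Rightarrow\Con(1)$ separately via the core model induction, while $\Con(1)\Rightarrow\Con(4)$ comes directly from Solovay's minimal-model analysis. Your step $(2)\Rightarrow(3)$ is correct and matches the paper, as does the identification of the core model induction as the engine for the reverse direction.
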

\begin{proof}
 $\Con\eqref{item-ADR-DC} \implies \Con \eqref{item-PR-strongly-compact-and-DC}$:
 By Solovay \cite{solovay1978independence},
 under $\mathsf{ZF} + \mathsf{AD}_\mathbb{R}$ we have $\mathsf{DC}$ if and only if $\Theta$ has uncountable cofinality, and in a minimal model of $\mathsf{ZF} + \mathsf{DC} + \mathsf{AD}_\mathbb{R}$ we have that $\Theta$ is singular of cofinality $\omega_1$.
 Assume that we are in such a minimal model of $\mathsf{ZF} + \mathsf{DC} + \mathsf{AD}_\mathbb{R}$ and
 take a cofinal increasing function
 $\pi:\omega_1 \rightarrow \Theta$.

 We can express $\powerset(\mathbb{R})$ as an increasing union
 $\bigcup_{\alpha < \omega_1} \Gamma_\alpha$ where the pointclass $\Gamma_\alpha$ consists of all sets of reals of Wadge rank at most $\pi(\alpha)$.
 For each $\alpha < \omega_1$ there is a surjection from $\mathbb{R}$ onto $\Gamma_\alpha$, so $\omega_1$ is $\Gamma_\alpha$-strongly compact.
 Moreover, $\mathsf{AD}_\mathbb{R}$ implies that there is a uniform way to choose, for each $\alpha < \omega_1$, a countably complete fine measure $\mu_\alpha$ on $\powerset_{\omega_1}(\Gamma_\alpha)$ witnessing this fact (namely the unique normal fine measure; see Woodin \cite[Theorem 4]{woodin1983ad}.)

 Using a countably complete nonprincipal measure $\nu$ on $\omega_1$ (which exists because $\omega_1$ is $\omega_1$-strongly compact) we can assemble these measures $\mu_\alpha$ into a countably complete fine measure $\mu^*$ on $\powerset_{\omega_1}(\powerset(\mathbb{R}))$ as follows: for $A\subseteq \powerset_{\omega_1}(\powerset(\mathbb{R}))$, we say
 \begin{center}
  $A\in \mu^* \iff \forall^*_\nu \alpha\, A \cap \Gamma_\alpha \in \mu_\alpha$.
 \end{center}
 It's easy to verify that $\mu^*$ is countably complete and fine using the fact that the measure $\nu$ and the measures $\mu_\alpha$ are countably complete and nonprincipal/fine respectively. Therefore the measure $\mu^*$ witnesses that $\omega_1$ is $\powerset(\mathbb{R})$-strongly compact, so statement \eqref{item-PR-strongly-compact-and-DC} holds (in our minimal model of $\mathsf{ZF} + \mathsf{DC} + \mathsf{AD}_\mathbb{R}$.)

 \eqref{item-PR-strongly-compact-and-DC} $\implies$ \eqref{item-R-strongly-compact-and-Theta-strongly-compact}: This follows from the existence of surjections from $\powerset(\mathbb{R})$ onto $\mathbb{R}$ and $\Theta$.

 $\Con\eqref{item-ADR-DC} \implies \Con \eqref{item-R-strongly-compact-and-theta-singular-and-DC}$:
 This follows by the aforementioned result of Solovay that, in a minimal model of $\mathsf{ZF} + \mathsf{DC} + \mathsf{AD}_\mathbb{R}$ the cardinal $\Theta$ is singular of cofinality $\omega_1$ (and of course $\omega_1$ is $\mathbb{R}$-strongly compact by Martin's measure.)

 $\Con(\ref{item-R-strongly-compact-and-Theta-strongly-compact}) \vee \Con(\ref{item-R-strongly-compact-and-theta-singular-and-DC}) \implies \Con(\ref{item-ADR-DC})$:
 We
 will show in a later section that
 if either statement \eqref{item-R-strongly-compact-and-Theta-strongly-compact} or statement \eqref{item-R-strongly-compact-and-theta-singular-and-DC} holds, then
 statement \eqref{item-ADR} holds in an inner model of the form $L(\Omega^*, \mathbb{R})$ where $\Omega^* \subset \powerset(\mathbb{R})$.
 The proof of 
$\Con(\ref{item-R-strongly-compact-and-theta-singular-and-DC}) \implies \Con(\ref{item-ADR-DC})$
 is similar to the proof of $\Con\eqref{item-R-strongly-compact-and-theta-singular} \implies \Con \eqref{item-ADR}$ in Theorem \ref{theorem:ADR-DC-equiconsistency}, although one should note that the inner model $L(\Omega^*, \mathbb{R})$ does not simply absorb $\mathsf{DC}$ from $V$; a bit more argument is required.
\end{proof}

\section{FRAMEWORK FOR THE CORE MODEL INDUCTION}
This section is an adaptation of the framework for the core model induction developed in \cite{trang2012scales} and \cite{operator_mice}, which in turns build on earlier formulations in \cite{CMI}. For more detailed discussions on the notions defined below as well as results concerning them, see \cite{trang2012scales} and \cite{operator_mice}. The first subsection imports some terminology from the theory of hybrid mice developed in \cite{trang2012scales} and \cite{operator_mice}. The terminology in this subsection will be used in Subsection \ref{cmi_operators} to define core model induction operators and  will be needed in many other places in the paper. The reader may skip them on the first read and come back when needed. Subsection \ref{briefHodMice} summarizes the theory of hod mice developed in \cite{ATHM}. Subsection \ref{cmi_operators} defines core model induction operators which are the operators we will construct in this paper.

\subsection{$\Omega$-PREMICE, STRATEGY PREMICE, AND G-ORGANIZED $\Omega$-PREMICE}\label{g_organized}
For a complete theory of $\F$-premice for operators $\F$, the reader is advised to read \cite{operator_mice}; for a detailed treatment of strategy mice, the reader is advised to read \cite[Sections 2,3]{trang2012scales}. We will use the terminology from these sources from now on.\footnote{The theory of strategy mice can be developed as a special case of the general theory of operator mice in \cite{operator_mice} but the authors of the papers decided to define strategy mice as $\J$-structures as this approach seems more convenient and gave the right notation for proving strong condensation properties of strategy mice like \cite[Lemma 4.1]{trang2012scales}.}

The definition below is essentially \cite[Definition 3.8]{trang2012scales}. For explanations about the notations, see \cite[Sections 2,3]{trang2012scales}.

\begin{definition}\label{dfn:determines}
Let $t=(\Omega,\varphi,X,A,\kappa)$ be suitable (see \cite[Definition 3.4]{trang2012scales} and $\mathfrak{M}=\M_1^{X,\#}(A)$.
We say that $\mathfrak{M}$ $\rm{generically \ interprets}$ $\Omega$\footnote{In \cite[Definition 3.8]{trang2012scales}, the terminology is: $t$ determines itself on generic extensions. We will later define a notion of generic determination which is slightly different.} iff there
are formulas $\Phi,\Psi$ in
$\Ll^+$ and some $\gamma > \delta^\mathfrak{M}$ such that 
$\mathfrak{M}|\gamma
\vDash\Phi$ and for any non-dropping
$\Lambda^{X,\kappa}_\mathfrak{M}$-iterate $\N$ of
$\mathfrak{M}$ via a countable tree $\T$ based on $\mathfrak{M}|\delta^{\mathfrak{M}}$,\footnote{$\delta^{\mathfrak{M}}$ is the Woodin cardinal of $\mathfrak{M}$ and $\Lambda^{X,\kappa}_\MFsharp$ denotes the unique $X$-$(0,\kappa)$-iteration strategy for $\MFsharp$.} any $\N$-cardinal 
$\delta$, 
any
$\gamma\in\Ord$ such that $\N|\gamma\models\Phi\ \&\ $``$\delta$ is
Woodin'', and any $g$ which is set-generic over $\N|\gamma$ (with $g\in V$),
we have that $\R =_{\rm{def}}$$ (\N|\gamma)[g]$
is closed under $\Omega$, and $\Omega\rest\R$ is defined 
over
$\R$ by
$\Psi$. We say such a pair $(\Phi,\Psi)$ $\rm{generically \ determines }$
t (or just $\Omega$).

Let $A\in\HC$ and let $\Omega$ be either an operator or an iteration strategy.
We say that $(\Omega,A)$(or just $\Omega$) is $\rm{nice}$ iff $(\Omega,A)$ is 
suitable and $(t_{\Omega,A})_2$ generically interprets $\Omega$.\footnote{$t_{\Omega,A}$ is a $5$-tuple defined \cite[page 27]{trang2012scales} and $(t_{\Omega,A})_2$ is the third component of $t_{\Omega,A}$.} We say that $(\Phi,\Psi)$ 
$\rm{generically\ determines}$
$(\Omega,A)$ iff $(\Phi,\Psi)$ generically determines $t_{\Omega,A}$.
\end{definition}

We fix a nice $(\Omega,A)$ (or just nice $\Omega$; we will at times ignore $A$), $X = (t_{\Omega,A})_2$, $\MFsharp$, $\Lambda_{\MFsharp} = \Lambda$, $(\Phi,\Psi)$ for the rest of the section. We define $\M_1^X(A)$ from $\MFsharp$ in the standard way.

See \cite[Section 3]{trang2012scales} for a proof that if $\Omega=\Sigma$ is a strategy (of a hod mouse, a suitable mouse) with branch condensation and is fullness preserving with respect to mice in some sufficiently closed, determined pointclass $\Gamma$ or if $\Sigma$ is the unique strategy of a sound ($Y$)-mouse for some operator $Y$, $\M_1^{Y,\sharp}$ generically interprets $Y$, and $Y$ condenses finely (see \cite[Definition 3.18]{operator_mice}) then $\MFsharp$ generically interprets $\Omega$.

\begin{definition}[Sargsyan, \cite{ATHM}]\label{genGenTree}
Let $M$ be a transitive structure. Let $\dot{G}$ be the name for the generic 
$G\subseteq\Coll(\omega,M)$ and let $\dot{x}_{\dot{G}}$ be the
canonical name for the real coding $\{(n,m) \ | \ G(n) \in G(m)\}$, where we
identify $G$ with $\bigcup G$. The 
\textbf{tree
$\Tt_M$ for making $M$
generically generic}, is the iteration tree $\Tt$ on $\MFsharp$ of maximal 
length such
that:
\begin{enumerate}
\item $\Tt$ is via $\Lambda$ and is everywhere non-dropping.
\item $\Tt\rest\OR(M)+1$ is the tree given by linearly iterating the first total 
measure of $\MFsharp$ and its images.
\item Suppose $\lh(\Tt)\geq\OR(M)+2$ and let $\alpha+1\in(\OR(M),\lh(\Tt))$.
Let $\delta=\delta(\M^\Tt_\alpha)$ and let $\BB=\BB(M^\Tt_\alpha)$ be the 
extender algebra of
$M^\Tt_\alpha$ at $\delta$. Then $E^\mathcal{T}_\alpha$ is the extender $E$ with
least index in $M^\mathcal{T}_\alpha$ such that for some condition
$p\in\Coll(\omega,M)$, $p \Vdash$``There is a $\mathbb{B}$-axiom induced
by $E$ which fails for $\dot{x}_{\dot{G}}$''.
\end{enumerate}
Assuming that $\MFsharp$ is sufficiently iterable, then $\Tt_M$ exists and has 
successor length.
\end{definition}

Sargsyan noticed that one can feed in 
$\Fop$ into a structure $\N$ indirectly, by feeding in the branches for 
(initial segments of) $\Tt_\M$, for various $\M\ins\N$. The operator ${^\g\Omega}$, defined in \cite[Definition 3.42]{trang2012scales}, and 
used in building g-organized $\Omega$-premice, feeds in 
branches for such $\Tt_\M$'s. We will also ensure that being such a structure is 
first-order - other than wellfoundedness and the correctness of the branches - 
by allowing sufficient spacing between these branches (see \cite[Remark 3.37]{trang2012scales}). 

\cite{trang2012scales} then defines the notions of g-organized $\Omega$-premouse and $\Theta$-g-organized $\Omega$-premouse. The reader can again see \cite[Section 3]{trang2012scales} for a more extensive treatment of these notions.

$\Theta$-g-organized $\Omega$-mice over $\mathbb{R}$ are important in the scales analysis generalizing Steel's work in Lp$(\mathbb{R})$. 

\begin{definition}\label{dfn:self-scaled}
Let $Y\sub\mathbb{R}$. We say that 
$Y$ is $\rm{self-scaled}$ iff there are scales on $Y$ and $\RR\cut 
Y$ which are analytical (i.e., $\Sigma^1_n$ for some $n<\om$) in $Y$.
\end{definition}

\begin{definition}\label{Lp}
Suppose $\Omega$ is a nice (operator or iteration strategy) and is an iteration strategy and $Y\subseteq\mathbb{R}$ is self-scaled. We define 
$\Lp^{^\gTheta\Omega}(\mathbb{R},Y)$ as the stack of all $\Theta$-g-organized $\Omega$-mice $\N$ 
over  
$(H_{\om_1},Y)$ (with parameter $\MFsharp$). We also say ($\Theta$-g-organized) $\Omega$-premouse over 
($\mathbb{R}$,Y) to in fact
mean over ($H_{\omega_1}, Y$).\end{definition}
\begin{remark}\label{SamePR}
It's not hard to see that for any such $Y$ as in Definition \ref{Lp}, $\powerset(\mathbb{R})\cap \textrm{Lp}^{^\g\Omega}(\mathbb{R},Y) = \powerset(\mathbb{R})\cap \textrm{Lp}^{^\gTheta\Omega}(\mathbb{R},Y)$. Suppose $\M$ is an initial segment of the first hierarchy and $\M$ is $E$-active. Note that $\M\vDash ``\Theta$ exists" and $\M|\Theta$ is $\Omega$-closed. By induction below $\M|\Theta^\M$, $\M|\Theta^\M$ can be rearranged into an initial segment $\N'$ of the second hierarchy. Above $\Theta^\M$, we simply copy the $E$-sequence and $B$-sequence\footnote{The $E$-sequence is the extender sequence of $\M$ and the $B$-sequence codes fragments of the strategy of $\MFsharp$.} from $\M$ over to obtain an $\N\lhd \textrm{Lp}^{^\gTheta\Fop}(\mathbb{R},X)$ extending $\N'$. The converse is similar. Similarly, if $\Omega$ is such that Lp$^{\Omega}(\mathbb{R},Y)$ is well-defined and $\Omega$ relativizes well, then $\powerset(\mathbb{R})\cap \textrm{Lp}^{^\g\Omega}(\mathbb{R},Y)=\powerset(\mathbb{R})\cap \textrm{Lp}^{\Omega}(\mathbb{R},Y)$. See \cite[Remark 4.11]{trang2012scales}. 
\end{remark}
In core model induction applications, we often have a pair $(\P,\Sigma)$
where $\P$ is a hod premouse and $\Sigma$ is $\P$'s strategy with branch
condensation and is fullness preserving (relative to mice in some pointclass) or
$\P$ is a sound (hybrid) premouse projecting to some countable set $a$ and
$\Sigma$ is the unique (normal) $\omega_1+1$-strategy for $\P$.  Let $\Omega = \Sigma$, $A\in \HC$ transitive such that $\P\in\J_1(A)$, $X$ be defined from $(\Omega,A)$ as above, and suppose $\MFsharp = \M_1^{X,\sharp}(A)$ exists. \cite{trang2012scales} shows that $\Omega$
condenses finely and $\MFsharp$ generically interprets $(\Omega,A)$. Also, the core model induction will give us that the code of $\Omega$, Code$(\Omega)$ (under a natural coding of subsets of $\HC$ by subsets of $\mathbb{R}$) is self-scaled. Thus, we can 
define $\Lp^{^\gTheta\Omega}(\mathbb{R},\rm{Code}$$(\Omega))$ as above (assuming sufficient iterability of $\MFsharp$). A core model induction 
is then used to prove that there is a maximal constructibly closed initial segment $\M$ of
$\Lp^{^\gTheta\Omega}(\mathbb{R},\rm{Code}$$(\Omega))$ that satisfies $\textsf{AD}^+$. What's needed
to prove this is the scales analysis of $\Lp^{^\gTheta\Omega}(\mathbb{R},\rm{Code}$$(\Omega))$
from the optimal hypothesis (similar to those used by Steel; see
\cite{K(R)} and \cite{Scalesendgap}). This is carried out in \cite{trang2012scales}; we will not go into details here.

\subsection{A VERY BRIEF TALE OF HOD MICE}
\label{briefHodMice} 
In this paper, a hod premouse $\P$ is one defined as in \cite{ATHM}. The reader is advised to consult \cite{ATHM} for basic results and notations concerning hod premice and mice. Let us mention some basic first-order properties of a hod premouse $\P$. There are an ordinal $\lambda^\P$ and sequences $\langle(\P(\alpha),\Sigma^\P_\alpha) \ | \ \alpha < \lambda^\P\rangle$ and $\langle \delta^\P_\alpha \ | \ \alpha \leq \lambda^\P  \rangle$ such that 
\begin{enumerate}
\item $\langle \delta^\P_\alpha \ | \ \alpha \leq \lambda^\P  \rangle$ is increasing and continuous and if $\alpha$ is a successor ordinal then $\P \vDash \delta^\P_\alpha$ is Woodin;
\item $\P(0) = \rm{Lp}_\omega(\P|\delta_0)^\P$; for $\alpha < \lambda^\P$, $\P(\alpha+1) = (\rm{Lp}$$_\omega^{^g\Sigma^\P_\alpha}(\P|\delta_\alpha))^\P$; for limit $\alpha\leq \lambda^\P$, $\P(\alpha) = (\rm{Lp}$$_\omega^{^g\oplus_{\beta<\alpha}\Sigma^\P_\beta}(\P|\delta_\alpha))^\P$;
\item $\P \vDash \Sigma^\P_\alpha$ is a $(\omega,o(\P),o(\P))$\footnote{This just means $\Sigma^\P_\alpha$ acts on all stacks of $\omega$-maximal, normal trees in $\P$.}-strategy for $\P(\alpha)$ with hull condensation;
\item if $\alpha < \beta < \lambda^\P$ then $\Sigma^\P_\beta$ extends $\Sigma^\P_\alpha$.
\end{enumerate}

Hod mice in this paper are g-organized; this is so that $S$-constructions work out smoothly as in the pure $L[\es]$-case. We will write $\delta^\P$ for $\delta^\P_{\lambda^\P}$ and $\Sigma^\P=\oplus_{\beta<\lambda^\P}\Sigma^\P_{\beta}$. Note that $\P(0)$ is a pure extender model. Suppose $\P$ and $\Q$ are two hod premice. Then $\P\trianglelefteq_{hod}\Q$\index{$\trianglelefteq_{hod}$} if there is $\a\leq\l^\Q$ such that $\P=\Q(\a)$. We say then that $\P$ is a \textit{hod initial segment} of $\Q$. $(\P,\Sigma)$ is a \textit{hod pair} if $\P$ is a hod premouse and $\Sigma$ is a strategy for $\P$ (acting on countable stacks of countable normal trees) such that $\Sigma^\P \subseteq \Sigma$ and this fact is preserved under $\Sigma$-iterations. Typically, we will construct hod pairs $(\P,\Sigma)$ such that $\Sigma$ has hull condensation, branch condensation, and is $\Gamma$-fullness preserving for some pointclass $\Gamma$. 

The reader should consult \cite{ATHM} for the definition of $B(\Q,\Sigma)$, and $I(\Q,\Sigma)$. Roughly speaking, $B(\Q,\Sigma)$ is the collection of all hod pairs which are strict hod initial segments of a $\Sigma$-iterate of $\Q$ and $I(\Q,\Sigma)$ is the collection of all $\Sigma$-iterates of $\Sigma$. In the case $\lambda^\Q$ is limit, $\Gamma(\Q,\Sigma)$ is the collection of $A\subseteq \mathbb{R}$ such that $A$ is Wadge reducible to some $\Psi$ for which there is some $\R$ such that $(\R,\Psi)\in B(\Q,\Sigma)$. See \cite{ATHM} for the definition of $\Gamma(\Q,\Sigma)$ in the case $\lambda^\Q$ is a successor ordinal.

\cite{ATHM} constructs under $\textsf{AD}^+$ and the hypothesis that there are no models of ``$\textsf{AD}_\mathbb{R}+\Theta$ is regular"  hod pairs that are fullness preserving, positional, commuting, and have branch condensation. Such hod pairs are particularly important for our computation as they are points in the direct limit system giving rise to \textrm{HOD} of $\textsf{AD}^+$ models. Under $\sf{AD}^+$, for hod pairs $(\M_\Sigma, \Sigma)$, if $\Sigma$ is a strategy with branch condensation and $\VT$ is a stack on $\M_\Sigma$ with last model $\N$, $\Sigma_{\N, \VT}$ is independent of $\VT$. Therefore, later on we will omit the subscript $\VT$ from $\Sigma_{N, \VT}$ whenever $\Sigma$ is a strategy with branch condensation and $\M_\Sigma$ is a hod mouse. In a core model induction, we don't quite have, at the moment $(\M_\Sigma,\Sigma)$ is constructed, an $\textsf{AD}^+$-model $M$ such that $(\M_\Sigma,\Sigma)\in M$ but we do know that every $(\R,\Lambda)\in B(\M_\Sigma,\Sigma)$ belongs to such a model. We then can show (using our hypothesis) that $(\M_\Sigma,\Sigma)$ belongs to an $\textsf{AD}^+$-model.

\subsection{CORE MODEL INDUCTION OPERATORS}
\label{cmi_operators}
Let 
\begin{center}
$\Omega^* =  \{A \subseteq \mathbb{R} \ | \ L(A,\mathbb{R})\vDash \sf{AD}^+ \}$.
\end{center}

We assume, for contradiction that 
\begin{adjustwidth}{2cm}{2cm}

$(\dag): \ \ \ $ there is no model $M$ containing all reals and ordinals such that $M \vDash ``\sf{AD}$$_\mathbb{R} + \Theta$ is regular".

\end{adjustwidth}
Under this smallness assumption, by work of G. Sargsyan in \cite{ATHM}, $\Omega^*$ is a Wadge hierarchy and furthermore, if $M$ is a model of $\sf{AD}^+$ then $M$ is a model of Strong Mouse Capturing ($\sf{SMC}$). Operators that we construct in the core model induction will also have the following additional properties (besides being nice). 

In the following, a transitive structure $N$ is closed under an operator $\Omega$ if whenever $x\in \dom(\Omega)\cap N$, then $\Omega(x)\in N$.

\begin{definition}[relativizes well]\label{relativizeWell}
Let $\Omega$ be an operator (in the sense of \cite[Definition 3.20]{operator_mice}). We say that $\Omega$ $\rm{relativizes \ well}$ if there is a formula $\phi(x,y,z)$ such that for any $a,b\in \textrm{dom}(\Omega)$ such that $a\in L_1(b)$, whenever $N$ is a transitive model of $\sf{ZFC}^-$ such that $N$ is closed under $\Omega$, $a,b, \Fop(b)\in N$ then $\Fop(a)\in N$ and is the unique $x\in N$ such that $N\vDash \phi[x, a, \Fop(b)]$.
\end{definition}
\begin{definition}[determines itself on generic extensions]\label{detGenExts}
Suppose $\Omega$ is an operator. We say that $\Omega$ $\rm{determines \ itself \ on \ generic \ extensions}$ if there is a formula $\phi(x,y,z)$, a parameter $c$ such that for any transitive structure $N$ of $\sf{ZFC}^-$ such that $\omega_1\subset N$, $N$ contains $c$ and is closed under $\Omega$, for any generic extension $N[g]$ of $N$ in $V$, $\Omega\cap N[g]\in N[g]$ and is definable over $N[g]$ via $(\phi,c)$, i.e. for any $e\in N[g]\cap \dom(\Omega)$, $\Omega(e)=d$ if and only if $d$ is the unique $d'\in N[g]$ such that $N[g]\vDash \phi[c,d',e]$.
\end{definition}

To analyze $\Omega^*$, we adapt the framework for the core model induction developed above and the scales analysis in \cite{trang2012scales}, \cite{Scalesendgap}, and \cite{K(R)}. We are now in a position to introduce the core model induction operators that we will need in this paper. These are particular kinds of mouse operators (in the sense of \cite[3.43]{operator_mice}) that are constructed during the course of the core model induction. These operators can be shown to satisfy the sort of condensation described above, relativize well,  and determine themselves on generic extensions. 

Suppose $(\Omega,A)$ is nice ($\Omega$ can be a mouse operator or an iteration strategy).\footnote{From now on, we typically say: let $\Omega$ be a nice operator in place of this. So $\Omega$ is either a mouse operator in the sense of \cite{operator_mice} or an iteration strategy as in \cite{trang2012scales}.} Suppose $\Gamma$ is an inductive-like pointclass that is determined. Let $\MFsharp = \M_1^{X,\sharp}(A)$ where $X = (t_{(\Omega,A)})_2$; later on in the paper, we occasionally write $\M_1^{\Omega,\sharp}(A)$ for $\MFsharp$. Lp$^{^\g\Omega}(x)$ is defined as the stack of $^\g\Omega$-premice $\M$ over $x$ such that $\M$ is $x$-sound, there is some $n$ such that $\rho_{n+1}(\M)\leq o(x) < \rho_n(\M)$ and every countable, transitive $\M^*$ embeddable into $\M$ has an $(n,\omega_1+1)$-$^\g\Omega$-iteration strategy $\Delta$ for a coarse, transitive $x$.\footnote{Here $\rho_k(\M)$ denotes the $k$-th projectum of $\M$.} We define Lp$^{^\g\Omega,\Gamma}(x)$  similarly but demand additionally that $\Delta\in \Gamma$. For $\N$ a $^\g\Omega$-premouse, let Lp$^{^\g\Omega}
_+(\N)$ denotes the stack of all g-organized $\Omega$-premice
$\M$ such that either $\M = \N$, or $\N \lhd \M$, $\N$ is a strong cutpoint of
$\M$, $\M$ is $o(\N)$-sound, and there is $n <\omega$ such that $\rho_{n+1}(\M) \leq o(\N ) < \rho_n(M)$
and $\M$ is countably above-$o(\N)$, $Y$-$(n, \omega_1 + 1)$-iterable. We define Lp$_+^{^\g\Omega,\Gamma}
(\N)$ similarly. These notions can be generalized to $^\gTheta\Omega$ or any other operator in an obvious way (cf. \cite[Definition 2.43]{trang2012scales}).

\begin{definition}\label{dfn:C_Gamma}
Let $\Gamma$ be an inductive-like pointclass. For $x\in\mathbb{R}$, $C_\Gamma(x)$ denotes the set 
of all $y\in\mathbb{R}$ such that for some ordinal $\gamma<\omega_1$, $x$ (as a subset of $\omega$) is 
$\Delta_\Gamma(\{\gamma\})$.

Let $x\in\HC$ be such that $x$ is transitive and $f:\omega\to x$ a surjection. Then $c_f\in\mathbb{R}$ 
denotes the code for $(x,\in)$ determined by $f$.
And $C_\Gamma(x)$ denotes the set of all 
$y\in\HC\cap\powerset(x)$ such that for all surjections $f:\omega\to x$ we have $f^{-1}(y)\in 
C_\Gamma(c_f)$.
\end{definition}

\begin{definition}\label{dfn:k-suitable}
  Let $(\Omega,A)$ be as above, $t\in\HC$ with $\MFsharp\in\J_1(t)$. Let $1\leq k<\om$. A 
premouse $\N$ over $t$ is
\emph{$\Omega$-$\Gamma$-$k$-suitable} (or just \emph{$k$-suitable} if $\Gamma$ and $\Omega$ are clear from the context) iff there is a strictly increasing sequence
$\left<\delta_i\right>_{i<k}$ such that
\begin{enumerate}
 \item $\all\delta\in\N$, $\N\sats$``$\delta$ is Woodin'' if and only if $\ex 
i<k(\delta=\delta_i)$.
 \item $\OR(\N)=\sup_{i<\om}(\delta_{k-1}^{+i})^\N$.
\item\label{item:cutpoint} If $\N|\eta$ is a strong 
cutpoint of $\N$ then
$\N|(\eta^+)^\N=\Lp_+^{^\g\Omega,\Gamma}(\N|\eta)$.
 \item\label{item:Qstructure} Let $\xi<\OR(\N)$, where $\N\sats$``$\xi$ 
is 
not Woodin''. Then $C_\Gamma(\N|\xi)\sats$``$\xi$ is not Woodin''.
 \end{enumerate}
We write $\delta^\N_i=\delta_i$; also let $\delta_{-1}^\N=0$ and $\delta_k^\N=\OR(\N)$. \footnote{We could also define a suitable premouse $\N$ as a $\Theta$-g-organized $\Fop$-premouse and all the results that follow in this paper will be unaffected.}

\end{definition}

Let $\N$ be $1$-suitable and let $\xi\in\OR(\N)$ be a limit ordinal, such that 
$\N\sats$``$\xi$ isn't Woodin''. Let $Q\pins\N$ be the Q-structure for 
$\xi$. Let $\alpha$ be such that $\xi=\OR(\N|\alpha)$. If $\xi$ is a strong 
cutpoint of $\N$ then $Q\pins\Lp_+^{^\g\Omega,\Gamma}(\N|\xi)$ by \ref{item:cutpoint}.
Assume now that $\N$ is reasonably iterable. If $\xi$ is a strong cutpoint of 
$Q$, our mouse capturing hypothesis combined 
with \ref{item:Qstructure} gives that $Q\pins\Lp_+^{^\g\Omega,\Gamma}(\N|\xi)$. If $\xi$ 
is an $\N$-cardinal then indeed $\xi$ is a strong cutpoint of $Q$, since $\N$ 
has only finitely many Woodins. If $\xi$ is not a strong cutpoint of $Q$, then 
by definition, we do not have $Q\pins\Lp_+^{^\g\Omega,\Gamma}(\N|\xi)$. However, 
using 
$*$-translation (see \cite{DMATM}), one can find a level of 
$\Lp_+^{^\g\Omega,\Gamma}(\N|\xi)$ which corresponds to $Q$ (and this level is in $C_\Gamma(\N|\xi)$).

Suppose $\Omega$ is a nice operator and $\Sigma$ is an iteration strategy for a $\Omega$-$\Gamma$-$1$-suitable premouse $\P$ such that $\Sigma$ has branch condensation and is $\Gamma$-fullness preserving (for some pointclass $\Gamma$), then we say that $(\P,\Sigma)$ is a \textit{$\Omega$-$\Gamma$-suitable pair} or just \textit{$\Gamma$-suitable pair} or just \textit{suitable pair} if the pointclass and/or the operator $\Omega$ is clear from the context (this notion of suitability is not related to the one mentioned in Definition \ref{dfn:determines}).

The following definition gives examples of ``good operators''. This is not a standard definition and is given here for convenience more than anything. These are the kind of operators that the core model induction in this paper deals with. We by no means claim that these operators are all the useful model operators that one might consider. 

\begin{definition}[Core model induction operators]\label{cmi operator} \index{core model induction operators}Suppose $(\P, \Sigma)$ is a $\G$-$\Omega^*$-suitable pair for some nice operator $\G$ or a hod pair such that $\Sigma$ has branch condensation and is $\Omega$-fullness preserving. Let $\Omega = \Sigma$ (note that $\Omega$ is suitable). Assume $\rm{Code}(\Omega)$ is self-scaled. We say $J$ is a $\Sigma$-$\rm{core \ model \ induction \ operator}$ or just a $\Sigma$-$\rm{cmi\ operator}$ if one of the following holds:
\begin{enumerate}
 \item $J$ is a nice $\Omega$-mouse operator (or $\g$-organized $\Omega$-mouse operator) defined on a cone of $H_{\omega_1}$ above some $a\in H_{\omega_1}$. Furthermore, $J$ condenses finely, relativizes well and determines itself on generic extensions. 
 
\item  For some $\a\in \mathrm{OR}$ such that $\a$ ends either a weak or a strong gap in the sense of \cite{K(R)} and \cite{trang2012scales}, letting $M=\textrm{Lp}^{^\gTheta\Omega}(\mathbb{R},\rm{Code}(\Omega))||\a$ and $\Gamma = (\Sigma_1)^M$, $M\models \sf{AD}$$^++\sf{MC}$$(\Sigma)$.\footnote{\label{MC}$\textsf{MC}(\Sigma)$ stands for the Mouse Capturing relative to $\Sigma$ which says that for $x, y\in \mathbb{R}$, $x$ is $\mathrm{OD}(\Sigma, y)$ (or equivalently $x$ is $\mathrm{OD}(\Omega, y)$) iff $x$ is in some $^\g$-organized $\Omega$-mouse over $y$. $\sf{SMC}$ is the statement that for every hod pair $(\P,\Sigma)$ such that $\Sigma$ is fullness preserving and has branch condensation, then $\textsf{MC}(\Sigma)$ holds.} For some transitive $b\in H_{\omega_1}$ and some $1$-suitable (or more fully $\Omega$-$\Gamma$-$1$-suitable) $\Omega$-premouse $\Q$ over $b$, $J=\Lambda$, where $\Lambda$ is an $(\omega_1, \omega_1)$-iteration strategy for $\Q$ which is $\Gamma$-fullness preserving, has branch condensation and is guided by some self-justifying-system (sjs) $\vec{A}=(A_i: i<\omega)$ such that $\vec{A}\in OD_{b, \Sigma, x}^M$ for some real $x$ and $\vec{A}$ seals the gap that ends at $\alpha$\index{seal a gap}\footnote{This implies that $\vec{A}$ is Wadge cofinal in $\bf{Env}(\Gamma)$, where $\Gamma = \Sigma_1^{M}$. Note that $\bf{Env}(\Gamma) = \powerset(\mathbb{R})^M$ if $\alpha$ ends a weak gap and $\bf{Env}(\Gamma) = \powerset(\mathbb{R})^{\textrm{Lp}^\Sigma(\mathbb{R})|(\alpha+1)}$ if $\alpha$ ends a strong gap.}. 
\end{enumerate}
\end{definition}

\section{FROM $\Omega$ TO $\M_1^{\sharp,\Omega}$}\label{OneStep}

Suppose $(\P,\Sigma)$ is a $\G$-$\Omega^*$-suitable pair for some nice operator $\G$ such that $\Sigma$ has branch condensation and is $\Omega^*$-fullness preserving. (Recall that $\Omega^*$ is the pointclass of all sets of reals $A$ such that $L(A,\mathbb{R}) \models \mathsf{AD}^+$.)
As a special case we also allow $(\P,\Sigma) = (\emptyset, \emptyset)$; the analysis of this special case is enough to prove Theorem \ref{theorem:AD-equiconsistency}.
In this section we assume the strong hypothesis
\begin{quote}
 $\mathsf{ZF} + \mathsf{DC}_{\powerset(\omega_1)} + {}$``$\omega_1$ is $\mathbb{R}$-strongly compact and $\neg\square_{\omega_1}$.''
\end{quote}
Note that this follows from any of the hypotheses of Theorems \ref{theorem:AD-equiconsistency}, \ref{theorem:ADR-equiconsistency}, and \ref{theorem:ADR-DC-equiconsistency}.

Let $\Omega$ be a $\Sigma$-CMI operator.
(If $(\P,\Sigma) = (\emptyset, \emptyset)$ then $\Omega$ is an ordinary CMI operator of the kind typically used in proving $\mathsf{AD}^{L(\mathbb{R})}$.)
We will use our strong hypothesis to
obtain the $\mathcal{M}_1^{\Omega,\sharp}$ operator, which is the relativization of the $\mathcal{M}_1^{\sharp}$ operator to a fine-structural hierarchy where the levels are obtained by repeated applications of the $\Omega$ operator (rather than the $\text{rud}$ operator, as in ordinary mice. Basically, for each $x$ in dom$(\Omega)$, if $\Omega$ is a strategy, $\M_1^{\Omega,\sharp}(x)$ is $\M_1^{X,\sharp}(x)$, where $X=(\Omega,\varphi_{\rm{min}})$ and $\varphi_{\rm{min}}$ is defined as in \cite[Definition 3.2]{trang2012scales} and otherwise $\M_1^{\Omega,\sharp}(x)$ is defined as in \cite{operator_mice}.)

The argument is similar to that used to obtain the ordinary $\mathcal{M}_1^{\sharp}$ operator from the failure of square at a measurable cardinal in $\mathsf{ZFC}$.
The relativization of the standard arguments from $\mathcal{M}_1^{\sharp}$ to $\mathcal{M}_1^{\Omega,\sharp}$ presents no special problems, but working without the Axiom of Choice requires a bit of care because ultrapowers of $V$
may fail to satisfy \Los's theorem.
However, \Los's theorem does hold for ultrapowers of wellordered inner models of $V$, and more generally for ultraproducts of families of inner models that are uniformly wellordered in the sense that there is 
a function associating to each model a wellordering of that model.

The relevance of Jensen's square principle $\square_\kappa$ is that it
holds for all infinite cardinals $\kappa$ in
all Mitchell--Steel extender models (mice)
by Schimmerling and Zeman \cite[Theorem 2]{schimmerling2001square}.
The proof of this result is sufficiently abstract that it relativizes from mice to $\Omega$-mice in a straightforward manner.
Therefore if $\square_\kappa$ fails in $V$, we get a failure of covering:
the successor of $\kappa$ cannot be computed correctly by any $\Omega$-mouse.

Because we are not assuming the Axiom of Choice, we will not construct the core model in $V$ but rather in an inner model $H$ of $V$ satisfying $\mathsf{ZFC}$.  This model $H$ will be obtained as a kind of $\HOD$.
A method used by  Schindler and Steel \cite{MaxCM} to prove covering results for the core model
of $V$ can be adapted to the core model of $H$,
provided that we can show that $H$ is close enough to $V$ in the relevant sense.
We show this closeness by using Vop\v{e}nka's theorem, similar to 
Schindler \cite{schindler1999successive}.

The following lemma is the main result of this section.  It will form the ``successor step'' in the proofs of the main theorems.

\begin{lemma}\label{lemma-not-square-implies-M1Fsharp}
 Assume $\mathsf{ZF} + \mathsf{DC}_{\powerset(\omega_1)} + {}$``$\omega_1$ is $\mathbb{R}$-strongly compact and $\neg\square_{\omega_1}$.''
 Let $(\P,\Sigma)$ be a $\G$-$\Omega^*$-suitable pair for some nice operator $\G$, a hod pair such that $\Sigma$ has branch condensation and is $\Omega^*$-fullness preserving, or $(\emptyset,\emptyset)$.
 Let $\Omega$ be a $\Sigma$-CMI operator defined on a cone in $H(\omega_1)$ over some element $a \in H(\omega_1)$.
 Then for every element $x$ of this cone, $\mathcal{M}_1^{\Omega,\sharp}(x)$ exists.
\end{lemma}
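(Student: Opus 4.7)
The plan is to argue by contradiction: assume $\mathcal{M}_1^{\Omega,\sharp}(x)$ fails to exist for some $x$ in the cone. Under this assumption, one can form the $\Omega$-relativized core model $K^{\Omega}$ below one Woodin cardinal, using the $\Omega$-version of the Mitchell--Steel construction, and all the standard fine-structural properties (including the $\square_\kappa$-sequences built in \cite{schimmerling2001square}) relativize routinely from mice to $\Omega$-mice. The core model induction hypothesis (namely that $\Omega$ is a $\Sigma$-CMI operator, so condenses finely, relativizes well, and determines itself on generic extensions) is precisely what makes this relativization mechanical. So the only genuinely new issue is how to organize the covering argument without full $\mathsf{AC}$.

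Since $\mathsf{DC}$ may fail in $V$, I cannot build $K^{\Omega}$ and take ultrapowers in $V$ directly, so I would follow the strategy outlined in the text: pass to an inner model $H\models\mathsf{ZFC}$ that is ``close'' to $V$, build $K^{\Omega}$ inside $H$, and run the covering argument there. A natural candidate is $H=\mathrm{HOD}_{a,\P,\Sigma,\Omega,T}$ for an appropriate predicate $T$ coding fullness information, sitting above the relevant hereditarily countable data. By Vop\v{e}nka's theorem (applied inside $V$ to the class of ordinal-definable sets), any set in $V$ is generic over $H$ for an ordinal-definable Vop\v{e}nka-style forcing in $H$; this is the key point that will let me transfer the large-cardinal property of $\omega_1$ down to $H$, following Schindler \cite{schindler1999successive}. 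In particular, the countably complete fine measure on $\powerset_{\omega_1}(\mathbb{R})$ given by $\mathbb{R}$-strong compactness restricts and pushes forward to give, in $H$, a countably complete fine measure on $\powerset_{\omega_1^V}(\lambda)$ for the relevant $H$-cardinal $\lambda$ obtained as the Vop\v{e}nka image of $\mathbb{R}$; this is used to witness measurability/strong-compactness-like behavior of $\omega_1^V$ in $H$.

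With $\mathsf{ZFC}$ in $H$ and this fine measure in hand, I would then run the Schindler--Steel covering argument of \cite{MaxCM} inside $H$ to conclude that $K^{\Omega,H}$ computes the successor of $\omega_1^V$ correctly, i.e., $(\omega_1^{V+})^{K^{\Omega,H}} = (\omega_1^{V+})^{H}$. Here is where $\mathsf{DC}_{\powerset(\omega_1)}$ is needed: \L os's theorem for the relevant ultrapowers of wellordered inner models requires countable choice for families indexed by $\powerset_{\omega_1}(\mathbb{R})$ of subsets of (essentially) $\powerset(\omega_1)$, which is exactly what this fragment of $\mathsf{DC}$ provides. By Schimmerling--Zeman \cite{schimmerling2001square} applied in $H$, $K^{\Omega,H}\models\square_{\omega_1^V}$, and the covering computation lifts the $\square$-sequence to one that is genuinely a $\square_{\omega_1}$-sequence in $V$ (because its coherence and clubness are absolute, and its length is correctly computed).

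This contradicts the hypothesis $\neg\square_{\omega_1}$ in $V$, completing the proof. The main obstacle is precisely the passage to $H$ and the verification that the Schindler--Steel covering argument, together with the Vop\v{e}nka translation of the $\mathbb{R}$-strong compactness measure, yields the correct computation of $\omega_2^V$ inside $H$; this is where one must be careful, since $H$ does not a priori see all of $\powerset_{\omega_1}(\mathbb{R})$, and one must check that the pushforward measure is both fine and countably complete from $H$'s perspective using only the choice principle $\mathsf{DC}_{\powerset(\omega_1)}$ available to us. The relativization of these covering-and-square arguments from mice to $\Omega$-mice is routine given the hypotheses on $\Omega$, so no additional work is required beyond substituting $\Omega$-levels for rudimentary levels throughout.
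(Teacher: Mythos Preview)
Your proposal has the right ingredients (Schimmerling--Zeman square in $K^\Omega$, Vop\v{e}nka genericity, the Schindler--Steel maximality argument) but the architecture has a genuine gap.

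You work with a single global model $H=\HOD_{a,\P,\Sigma,\Omega,T}$ of $\mathsf{ZFC}$ and aim to show that the $\square_{\omega_1^V}$-sequence of $K^{\Omega,H}$ is a $\square_{\omega_1}$-sequence in $V$. For this you need $(\omega_1^{V+})^{K^{\Omega,H}}=\omega_2^V$. Your covering argument, even if it goes through, only yields $(\omega_1^{V+})^{K^{\Omega,H}}=(\omega_1^{V+})^{H}$; there is no reason a global $\HOD$-type model should compute $\omega_2^V$ correctly, so ``its length is correctly computed'' is unjustified. Relatedly, the fine measure $\mu$ on $\powerset_{\omega_1}(\mathbb{R})$ is not ordinal-definable and hence not in $H$, so one cannot literally ``run the covering argument inside $H$'' using $\mu$; you are forced to use $\mu$ externally, and then you must explain why the resulting ultrapower of $H$ is elementary and why the Vop\v{e}nka forcing is small enough not to disturb the core model.

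The paper resolves both problems by replacing your single $H$ with a $\mu$-indexed family $H_\sigma=\HOD_{\{\Omega,x\}}^{L^{\Omega^\sharp}(\sigma)}$ for countable $\sigma\subset\mathbb{R}$, and taking the ultraproduct $H=[\sigma\mapsto H_\sigma]_\mu$. Each $L^{\Omega^\sharp}(\sigma)$ already has a large cardinal (its least indiscernible $\xi_\sigma$), so no measure need be pushed into $H_\sigma$; and the Vop\v{e}nka poset of $H_\sigma$ for \emph{countable} sets of \emph{countable} ordinals is small, lying below $\xi_\sigma$. In the ultraproduct this becomes a poset in $(V_\Xi)^H$ that makes every $A\subset\omega_1^V$ generic over $H$ without destroying $K$. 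The argument then runs in the opposite direction from yours: from $\neg\square_{\omega_1}$ in $V$ one deduces $(\kappa^{+})^{j(K)}<\kappa^{+V}$ (failure of covering, not covering), internalizes this failure to $j(H)[g]$ via the small Vop\v{e}nka forcing, and then applies $j$ once more together with the maximality theorem of \cite{MaxCM} to produce a Shelah cardinal in the core model, contradicting the case hypothesis. One must also separately treat the case that $\mathcal{M}_1^{\Omega,\sharp}(x)$ already exists in $H_\sigma$ for $\mu$-almost every $\sigma$: here fineness of $\mu$ and Vop\v{e}nka genericity are used to verify that the candidate is genuinely $\omega_1$-iterable in $V$.
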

\begin{proof}
 First, note that we may assume without loss of generality that full $\mathsf{DC}$ holds, by passing to the inner model $L(\powerset(\omega_1),\Sigma,\Omega)[\mu]$ where we are constructing relative to a predicate $\mu$ for a fine countable complete measure on $\powerset_{\omega_1}(\mathbb{R})$.
 The hypothesis and conclusion are absolute to this inner model. In particular the model satisfies $\neg\square_{\omega_1}$ because it computes $\omega_2$ correctly, and it satisfies $\mathsf{DC}_{\powerset(\omega_1)}$ because it contains all countable sequences from $\powerset(\omega_1)$. In the inner model, this fragment of $\mathsf{DC}$ implies full $\mathsf{DC}$ by a standard argument using the fact that every set is the surjective image of $\powerset(\omega_1) \times \alpha$ for some ordinal $\alpha$.
 Therefore we may safely use $\mathsf{DC}$ in the argument that follows.
 
 Note that because $\omega_1$ is measurable, the operators $\Omega^\sharp$ and $\Omega^{\sharp^\sharp}$ are also defined on the cone in $H(\omega_1)$ over $a$.
 Let $x \in H(\omega_1)$ be in the cone over $a$.
 Take a countably complete fine measure $\mu$ on $\powerset_{\omega_1}(\mathbb{R})$.
 For $\mu$-almost every set $\sigma$ we have $x \in \sigma$ and we can define the inner model
 \[H_\sigma = \HOD_{\{\Omega,x\}}^{L^{\Omega^{\sharp}}(\sigma)}.\]
 
 A few remarks on notation:
 The model $L^{\Omega^{\sharp}}(\sigma)$ is the proper class model that is obtained by iterating
 the top measure of $\Omega^{\sharp^\sharp}(\sigma)$ out of the universe.
 It is closed under its version of $\Omega$ even above the point $\omega_1^V$ up to which $\Omega$ was originally defined; however, we will only ever use the $\Omega$ operator of the model $L^{\Omega^{\sharp}}(\sigma)$ up to the least indiscernible of that model, which is the critical point of the top measure of $\Omega^{\sharp^\sharp}(\sigma)$ and is countable in $V$. 
 By the parameter $\Omega$ in the definition of $H_\sigma$, we really mean the restriction of $\Omega$ to the model $L^{\Omega^{\sharp}}(\sigma)$, which is amenable to that model because $\Omega$ relativizes well. There will not be any incompatibility between the various restrictions and extensions of $\Omega$ that we use, so we denote them all by ``$\Omega$''.
  
 Let $\xi_\sigma$ denote the least indiscernible of $L^{\Omega^{\sharp}}(\sigma)$.
 Note that in the model $H_\sigma$ we can do core model theory below $\xi_\sigma$: it is well-known that the existence of an external measure can substitute for measurability of $\xi_\sigma$ in this regard.
 The operator $\Omega$ is amenable to $H_\sigma$ (again because it relativizes well)
 and we can attempt the $K^{c,\Omega}(x)$ construction in $H_\sigma$ up to the cardinal $\xi_\sigma$.
 This is like the ordinary $K^c$ construction, except relativized to $\Omega$ and built over the set $x$ (see \cite[Definition 3.28]{operator_mice} and \cite[Definition 2.46]{trang2012scales}).
 By the $K^\Omega$ existence dichotomy (see Schindler and Steel \cite{CMI}) applied in the various models $H_\sigma$, one of the following two cases holds:
 
 \begin{enumerate}
  \item\label{item-M-1-F-sharp} For $\mu$-almost every set $\sigma \in \powerset_{\omega_1}(\mathbb{R})$, the model $H_\sigma$ satisfies the statement that $\mathcal{M}_1^{\Omega,\sharp}(x)$ exists and is $\xi_\sigma$-iterable by the (unique) $\Omega^\sharp$-guided strategy.
  \item\label{item-K} For $\mu$-almost every set $\sigma \in \powerset_{\omega_1}(\mathbb{R})$, the model $K_\sigma$, defined as the core model $(K^{\Omega}(x))^{H_\sigma}$ built up to $\xi_\sigma$, exists and has no Woodin cardinals.
 \end{enumerate}

 \begin{claim}
  If case \eqref{item-M-1-F-sharp} of the $K^\Omega$ existence dichotomy holds, then $\mathcal{M}_1^{\Omega,\sharp}(x)$ exists in $V$.
 \end{claim}
 \begin{proof}
   For $\mu$-almost every set $\sigma \in \powerset_{\omega_1}(\mathbb{R})$, the
  premouse $(\mathcal{M}_1^{\Omega,\sharp}(x))^{H_\sigma}$ exists by the case hypothesis.
  It is sound and projects to $x$, so it codes itself as a subset of $x$, which is countable.
  Therefore by the countable completeness of $\mu$ we can fix a single $\Omega$-premouse $\mathcal{M}$ over $x$ such that $\mathcal{M} = (\mathcal{M}_1^{\Omega,\sharp}(x))^{H_\sigma}$ for $\mu$-almost every set $\sigma$.
  We will show that $\mathcal{M}$ is $\omega_1$-iterable in $V$ by the (unique) $\Omega^\sharp$-guided iteration strategy.  Then $(\omega_1+1)$-iterablity will follow by the measurability of $\omega_1$.
  
  Let $\mathcal{T}$ be a countable  $\Omega^\sharp$-guided putative iteration tree on $\mathcal{M}$ in $V$,
  where by ``putative'' we mean that its last model, if it has one, may fail to be an $\Omega$-premouse. (Note that an $\Omega$-premouse is required in particular to be wellfounded, and this is the only requirement if $\Omega = \operatorname{rud}$.)
  We want to show that if $\mathcal{T}$ has successor length, then its
  last model is an $\Omega$-premouse, and if it has limit length, then it has a
  cofinal branch $b$ such that $\mathcal{M}^\mathcal{T}_b$ is an $\Omega$-premouse and $\mathcal{Q}(b,\mathcal{T}) \unlhd \Omega^\sharp(\mathcal{M}(\mathcal{T}))$.
  
  Take a real $t$ that codes $\mathcal{T}$.
  Then for $\mu$-almost every set $\sigma$ we have $t \in \sigma$
  by the fineness of $\mu$.
  Fix a set $\sigma$ such that
  $H_\sigma$ satisfies the statement ``$\mathcal{M}_1^{\Omega,\sharp}(x)$ exists and is $\xi_\sigma$-iterable,'' $(\mathcal{M}_1^{\Omega,\sharp}(x))^{H_\sigma} = \mathcal{M}$, and $t \in \sigma$.
  By Vopenka's theorem applied in the model $L^{\Omega^{\sharp}}(\sigma)$,
  the real $t$ is contained in a generic extension $H_\sigma[g]$ of $H_\sigma$.
  In fact because $\xi_\sigma$ is inaccessible in $L^{\Omega^{\sharp}}(\sigma)$ 
  the poset from the proof of Vopenka's theorem (see, for example, Jech \cite[Theorem\ 15.46]{Jech}) 
  is in $(V_{\xi_\sigma})^{H_\sigma}$.

  In $H_\sigma$ the $\Omega$-premouse $\mathcal{M}$ is $\xi_\sigma$-iterable by the $\Omega^\sharp$-guided strategy, by our assumptions.
  Because the $\Omega^\sharp$ operator condenses well and 
  determines itself on generic extensions in the sense of Schindler and Steel \cite[Definition 1.4.10]{CMI},\footnote{In the ``gap in scales'' case, the proof that the $\Omega^\sharp$ operator determines itself on generic extensions is given by Schindler and Steel \cite[Section 5.6, proof of Claim 1 in case $n=0$]{CMI}.  The proof in the other cases is a straightforward induction.}
  a standard argument (see Schindler and Steel \cite[Lemma 2.7.2]{CMI}) shows that
  $\mathcal{M}$ is still $\xi_\sigma$-iterable in $H_\sigma[g]$ by
  the $\Omega^\sharp$-guided iteration strategy there.

  The model $H_\sigma[g]$ sees that the tree $\mathcal{T}$ is $\Omega^\sharp$-guided. Therefore in $H_\sigma[g]$,
  if $\mathcal{T}$ has successor length, then the last model of $\mathcal{T}$ is a wellfounded $\Omega$-premouse, and if $\mathcal{T}$ has limit length, then it has a cofinal branch $b$ such that $\mathcal{M}_b^\mathcal{T}$ is an $\Omega$-premouse and
  $\mathcal{Q}(b,\mathcal{T}) \unlhd \Omega^\sharp(\mathcal{M}(\mathcal{T}))$.
  In either case this fact about $\mathcal{T}$ is absolute to $V$, giving the desired iterability.
 \end{proof}
 
 \begin{claim}
  Case \eqref{item-K} of the $K^\Omega$ existence dichotomy cannot hold.  
 \end{claim}
 \begin{proof}
 This case is where the hypothesis $\neg \square_{\omega_1}$ is used.
 Because $H_\sigma$ is defined as the $\HOD_{\{\Omega,x\}}$ of $L^{\Omega^{\sharp}}(\sigma)$,
 we can define the Vop\v{e}nka poset $\mathbb{P}_\sigma \in H_\sigma$ to make 
 every countable set of countable ordinals in $L^{\Omega^{\sharp}}(\sigma)$ generic over $H_\sigma$.
 For a countable set of countable ordinals $a$ of $L^{\Omega^{\sharp}}(\sigma)$,
 let $g_{\sigma,a}$ denote the $H_\sigma$-generic filter over $\mathbb{P}_\sigma$ induced by $a$, which has the property that $a \in H_\sigma[g_{\sigma,a}]$.\footnote{Unlike in case \eqref{item-M-1-F-sharp}, it is important here that the Vop\v{e}nka generic filter $g_{\sigma,a}$ is induced by $a$ itself and does not depend on the choice of a real coding $a$.}
 Note that $H_\sigma \in (V_{\xi_\sigma})^{H_\sigma}$ because $\xi_\sigma$ is inaccessible in $L^{\Omega^{\sharp}}(\sigma)$.

 Define the ultraproducts
 \begin{align*}
  H &= [\sigma \mapsto H_\sigma]_\mu &   \Xi &= [\sigma \mapsto \xi_\sigma]_\mu \\
  K &= [\sigma \mapsto K_\sigma]_\mu &   \mathbb{P} &= [\sigma \mapsto \mathbb{P}_\sigma]_\mu.
 \end{align*}
 Every countable set of countable ordinals $a$ in $V$
 is seen as a countable set of countable ordinals in $L^{\Omega^{\sharp}}(\sigma)$
 for $\mu$-almost every $\sigma$ (by fineness applied to a real coding $a$)
 so  we can define the ultraproduct
 \[ g_a = [\sigma \mapsto g_{\sigma,a}]_\mu.\]
 Then applying \Los's theorem to uniformly wellordered families of structures is enough to establish the following facts.\footnote{If the measure $\mu$ were normal, then \Los's theorem could be applied to the models $L^{\Omega^{\sharp}}(\sigma)$ themselves to yield a model $L^{\Omega^{\sharp}}(\mathbb{R})$ in which $H$, $K$, $\Xi$, and $\mathbb{P}$ could then be defined.  But this is not possible in general, for example under $\mathsf{AD} + V=L(\mathbb{R})$, where the hypothesis of the lemma holds for $\Omega = \operatorname{rud}$ but $\mathbb{R}^\sharp$ does not exist.}
 \begin{itemize}
  \item $H$ is an inner model of $\mathsf{ZFC}$ with a cardinal $\Xi > \omega_1^V$ that is large enough to do core model theory below it.
  \item $K$ is the core model of $H$ built up to $\Xi$, and it has no Woodin cardinals.
  \item $\mathbb{P} \in (V_\Xi)^H$ is a forcing poset.
  \item To each countable set of countable ordinals $a$ in $V$ we have assigned an $H$-generic filter $g_a \subset \mathbb{P}$ such that $a \in H[g_a]$.
 \end{itemize}

 Now write $\kappa = \omega_1^V$ and define the $\mu$-ultrapower map
 \[j : V \to \Ult(V,\mu),\quad \crit(j) = \kappa.\]
 Recall that $j$ itself is not elementary, but its restrictions to wellordered inner models are elementary.
 (We remark that one could use any ultrapower map with critical point $\kappa$ here; the measurability of $\omega_1^V$ suffices for the following argument in place of $\mathbb{R}$-strong compactness of $\omega_1^V$, although it is not clear that it would suffice for the previous argument.)
 
 Note that to every set $A \subset \kappa$ in $V$ we
 can assign a $j(H)$-generic filter $g_A \subset j(\mathbb{P})$ such that $A \in j(H)[g_A]$.
 To see this, consider the sequence of generic filters $\vec{g}_A = (g_{A \cap \alpha} : \alpha < \kappa)$, use the elementarity of the map 
 $j \restriction L[H,A,\vec{g}]$, and define $g_A = j(\vec{g}_A)_\kappa$.

 Because $\square_\kappa$ fails in $V$, we have 
 \[(\kappa^+)^{j(K)} < (\kappa^+)^V\]
 by a result of Schimmerling and Zeman \cite[Theorem 2]{schimmerling2001square} relativized to the operator $j(\Omega)$ and applied to the model $j(K)$, which is the core model of $j(H)$.
 
 Take a set $A \subset \kappa$ in $V$ coding a wellordering of $\kappa$ of order type $(\kappa^+)^{j(K)}$
 and define $g = g_A$.
 Because $A \in j(H)[g]$ we get
 \[ j(H)[g] \models (\kappa^+)^{j(K)} < \kappa^+.\]
 Because $g$ was added by a small forcing below the large cardinal $j(\Xi)$ where $j(K)$ was constructed, we have that $j(K)$ is still the core model of $j(H)[g]$.\footnote{To make sense of the core model of $j(H)[g]$ we are using the fact that $j(H)$'s version of the operator $\Omega$ determines itself on generic extensions.}
 Therefore (and this is the crucial point) the model $j(H)[g]$ sees the failure of covering for its own core model at $\kappa$, so we can apply the map $j$ once more to get a contradiction by a standard argument, outlined below.

 Consider the restriction
 \[j \restriction j(H)[g] : j(H)[g] \to j(j(H))[j(g)],\]
 which is an elementary embedding.
 Because the domain $j(H)[g]$ satisfies $(\kappa^+)^{j(K)} < \kappa^+$, the further restriction $j \restriction \powerset(\kappa)^{j(K)}$ is in the codomain $j(j(H))[j(g)]$
 by a standard argument due to Kunen.
 Therefore
 we have 
 \[F \in j(j(H))[j(g)]\]
 where $F$ is the
 $(\kappa,j(\kappa))$-extender over $j(K)$ derived from the map $j \restriction \powerset(\kappa)^{j(K)}$.
 Note that $K|\kappa = j(K)|\kappa$, and $\kappa$ is an inaccessible cardinal in both $\mathsf{ZFC}$ models $K$ and $j(K)$ because it is a measurable cardinal in $V$.  Therefore $j(K)|j(\kappa) = j(j(K))|j(\kappa)$, and $j(\kappa)$ is an inaccessible cardinal in both models $j(K)$ and $j(j(K))$, so we have
 \begin{align*}
  (\kappa^+)^{j(K)} &= (\kappa^+)^{j(j(K))} < j(\kappa) \quad \text{and}\\
  \powerset(\kappa)^{j(K)} &= \powerset(\kappa)^{j(j(K))}.
 \end{align*}
 Therefore the extender $F$
 can also be considered as an extender over $j(j(K))$,
 and it coheres with $j(j(K))$.
 Note that $j(j(K))$ is the core model of $j(j(H))[j(g)]$
 
 This extender $F$ has superstrong type, and we can apply
 the maximality property of the core model \cite[Theorem 2.3]{MaxCM} in the model $j(j(H))[j(g)]$
 to show that every proper initial segment $F \restriction \nu$ of $F$, where $\nu < j(\kappa)$, is on the sequence of the core model $j(j(K))$.
 Then in the core model $j(j(K))$, these initial segments will witness that $\kappa$ is a Shelah cardinal.
 This will contradict our case hypothesis, which says that there are no Woodin cardinals in $K$.
 
 Let $\mathcal{M} = j(j(K))$ and let $F \restriction \nu$, where $\nu < j(\kappa)$, be a proper initial segment of $F$.
 We want to see that $F \restriction \nu$ is on the sequence of the core model $j(j(K))$.
 Without loss of generality we may assume that $\nu$ is at least the common $\kappa^+$ of the models $j(K)$ and $\mathcal{M}$.
 It suffices to show that
 the pair $(\mathcal{M}, F \restriction \nu)$ is weakly countably certified \cite[Definition 2.2]{MaxCM}.
 Working in the model $j(H)[g]$, take a transitive, power admissible set $N$ such that $N^\omega \subset N$,
 $V_\kappa \cup j(K)|((\kappa^+)^{j(K)}+1) \subset N$, and $\left|N\right| = \kappa$.
 Stepping out to $V$ for a moment and applying Kunen's argument again, we have
 \[G \in j(j(H))[j(g)]\]
 where $G$ is the
 $(\kappa,j(\kappa))$-extender over $N$ derived from $j \restriction \powerset(\kappa)^N$.
 Now in the model $j(j(H))[j(g)]$ it is easy to verify that the pair
 $(N,G)$ is a weak $\mathcal{A}$-certificate \cite[Definition 2.1]{MaxCM}
 for $(\mathcal{M},F \restriction \nu)$
 whenever $\mathcal{A}$ is a countable subset of $\bigcup_{n<\omega} \powerset([\kappa]^n) \cap \mathcal{M}|\nu$,\footnote{Or indeed if $\mathcal{A}$ is equal to $\bigcup_{n<\omega} \powerset([\kappa]^n) \cap \mathcal{M}|\nu$ itself; we don't need countability, and we don't need to choose the certificate $(N,G)$ differently depending on $\mathcal{A}$ (or on $\nu$, for that matter.)}
 noting that $\mathcal{M}|\nu$, $\mathcal{M}$, and $j(K)$ all have the same subsets of $[\kappa]^n$ (because $\nu$ is greater than or equal to the common $\kappa^+$ of $j(K)$ and $\mathcal{M}$.)
\end{proof}

We have shown that if case \eqref{item-M-1-F-sharp} of the $K^\Omega$ existence dichotomy holds, then the conclusion of the lemma holds, and we have shown that case \eqref{item-K} contradicts the hypothesis of the lemma, so the proof of the lemma is complete.
\end{proof}

We remark that because $\Omega$ is a $\Sigma$-CMI operator, the operator  $\mathcal{M}_1^{\Omega,\sharp}$ given by the lemma is also a $\Sigma$-CMI operator.

\begin{corollary}\label{corollary:not-square-implies-PD}
 Assume $\mathsf{ZF} + \mathsf{DC}_{\powerset(\omega_1)} + {}$``$\omega_1$ is $\mathbb{R}$-strongly compact and $\neg\square_{\omega_1}$.'' Then $\mathsf{PD}$ holds.
\end{corollary}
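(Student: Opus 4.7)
The plan is to apply Lemma \ref{lemma-not-square-implies-M1Fsharp} repeatedly, working with the trivial hod pair $(\P,\Sigma) = (\emptyset,\emptyset)$, so that $\Sigma$-CMI operators are just ordinary CMI operators. The hypothesis of the corollary is precisely the hypothesis of the lemma, so we are free to iterate.

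I would proceed by induction on $n < \omega$ to show that $\mathcal{M}_n^{\sharp}(x)$ exists and is $(\omega_1 + 1)$-iterable for every real $x$. For the base case, note that the trivial operator (rudimentary closure) is a CMI operator defined on a cone in $H_{\omega_1}$, so the lemma immediately yields $\mathcal{M}_1^{\sharp}(x)$ for every $x \in H_{\omega_1}$. For the inductive step, assume $\mathcal{M}_n^{\sharp}$ exists on a cone. By the remark following Lemma \ref{lemma-not-square-implies-M1Fsharp}, the operator $\Omega = \mathcal{M}_n^{\sharp}$ obtained from the previous step is itself a CMI operator (it is a nice mouse operator, condenses finely, relativizes well, and determines itself on generic extensions, by the standard fine-structural analysis). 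Applying Lemma \ref{lemma-not-square-implies-M1Fsharp} once more with this $\Omega$ produces $\mathcal{M}_1^{\Omega,\sharp}(x)$, which is precisely $\mathcal{M}_{n+1}^{\sharp}(x)$ (a sharp for a premouse with $n+1$ Woodin cardinals), on the cone.

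Once we have $\mathcal{M}_n^{\sharp}(x)$ existing and being $(\omega_1 + 1)$-iterable for every $n < \omega$ and every real $x$, $\mathsf{PD}$ follows by the standard theorem of Martin--Steel and Woodin: $n$ Woodin cardinals with a measurable above them (which is provided by the sharp) yield determinacy of $\boldsymbol{\Pi}^{1}_{n+1}$ sets. Since this holds for every $n$, every projective set is determined, i.e., $\mathsf{PD}$ holds.

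The main technical point of the argument is verifying that the passage from $\mathcal{M}_n^{\sharp}$ to $\mathcal{M}_{n+1}^{\sharp}$ is genuinely iterable — that is, that the operator $\mathcal{M}_n^{\sharp}$ continues to satisfy the CMI-operator requirements needed to reapply Lemma \ref{lemma-not-square-implies-M1Fsharp}. This is where the proof of the lemma is genuinely used: one needs that the $\Omega^\sharp$-guided strategy in the relevant $\HOD$-like inner models absorbs back to $V$, and this in turn requires that $\mathcal{M}_n^{\sharp}$ relativizes well and determines itself on generic extensions. These properties are established inductively along with iterability using the standard comparison and condensation machinery for $\Omega$-mice from \cite{operator_mice}, so no essentially new ideas are needed beyond those already developed for the lemma.
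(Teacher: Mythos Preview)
Your proposal is correct and follows essentially the same approach as the paper: induct on $n$, applying Lemma~\ref{lemma-not-square-implies-M1Fsharp} with $(\P,\Sigma)=(\emptyset,\emptyset)$ and $\Omega=\mathcal{M}_n^\sharp$ at each step, then invoke the standard derivation of $\mathsf{PD}$ from the existence of all $\mathcal{M}_n^\sharp$. One small inaccuracy: $\mathcal{M}_1^{\Omega,\sharp}$ for $\Omega=\mathcal{M}_n^\sharp$ is not \emph{precisely} $\mathcal{M}_{n+1}^\sharp$ but rather a hybrid premouse that is strictly stronger (its levels are closed under $\mathcal{M}_n^\sharp$); the paper phrases this as ``stronger than $\mathcal{M}_{n+1}^\sharp$,'' which is what you actually need.
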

\begin{proof}
 We show by induction on $n < \omega$ that the $\mathcal{M}_n^\sharp$ operator is total on $H(\omega_1)$.
 The base case is the $\mathcal{M}_0^\sharp$ operator, meaning the ordinary sharp operator, which is total on $H(\omega_1)$ because $\omega_1$ is measurable.
 For the induction step we apply Lemma \ref{lemma-not-square-implies-M1Fsharp}
 to go from the operator $\Omega = \mathcal{M}_n^\sharp$ to the operator $\mathcal{M}_1^{\Omega,\sharp}$, which is stronger than $\mathcal{M}_{n+1}^\sharp$.
 It follows from the existence of $\mathcal{M}_n^\sharp(x)$ for every $n < \omega$ and $x \in \mathbb{R}$
 that Projective Determinacy holds.
\end{proof}

In the next section we will strengthen this conclusion to $\mathsf{AD}^{L(\mathbb{R})}$ and thereby obtain an equiconsistency result (Theorem \ref{theorem:AD-equiconsistency}.)
\section{THE MAXIMAL MODEL OF ``$\sf{AD}$$^+ + \Theta=\theta_\Sigma$"}\label{CMIInsideLp}

Throughout this section, we assume the hypothesis of Lemma \ref{lemma-not-square-implies-M1Fsharp}, namely we assume

\begin{quote}
 $\mathsf{ZF} + \mathsf{DC}_{\powerset(\omega_1)} + {}$``$\omega_1$ is $\mathbb{R}$-strongly compact and $\neg\square_{\omega_1}$.''
\end{quote}
Suppose $(\P,\Sigma)$ is a $\G$-$\Omega^*$-suitable pair for some nice operator $\G$ such that $\Sigma$ has branch condensation and is $\Omega^*$-fullness preserving.
As a special case we also allow $(\P,\Sigma) = (\emptyset, \emptyset)$; the analysis of this special case is enough to prove Theorem \ref{theorem:AD-equiconsistency}.
We first define the ``maximal pointclass of $\mathsf{AD}^+ + \Theta = \theta_\Sigma$".
\begin{definition}\label{def:max_sigma}
Let $(\P,\Sigma)$ be as above. Let
\[\Omega_\Sigma = \bigcup\{\powerset(\mathbb{R})\cap L(A,\mathbb{R}) \mid A\subseteq \mathbb{R},  L(A,\mathbb{R})\vDash \mathsf{AD}^+ + \Theta = \theta_\Sigma + \mathsf{MC}(\Sigma)\}.\]
\end{definition}

We note that by $(\dag)$, $\Omega_\Sigma$ is a Wadge hierarchy. In the case $(\P,\Sigma)= (\emptyset,\emptyset)$,
substitute $\theta_0$ for $\theta_\Sigma$ and ordinary mouse capturing $\mathsf{MC}$ for $\mathsf{MC}(\Sigma)$.
In this section, we will prove that 
\begin{equation}\label{eqn:maximal_model}
L(\Omega_\Sigma,\mathbb{R})\cap \powerset(\mathbb{R}) = \Omega_\Sigma.
\end{equation}
This has the consequence that $L(\Omega_\Sigma,\mathbb{R}) \vDash \sf{AD}$$^+ + \Theta=\theta_\Sigma$. The model $L(\Omega_\Sigma,\mathbb{R})$ is called the ``maximal model of $\sf{AD}$$^+ + \Theta=\theta_\Sigma$".

Let $\Omega=\Sigma$. The proof of \eqref{eqn:maximal_model} depends on understanding models of $\mathsf{ZF} + \mathsf{AD}^+ + V = L(\powerset(\mathbb{R})) + \Theta = \theta_\Sigma + \mathsf{MC}(\Sigma)$ as hybrid mice over $\mathbb{R}$, $\Theta$-$\g$-organized as in Section \ref{g_organized}. (In the case $(\mathcal{P},\Sigma) = (\emptyset,\emptyset)$, we consider ordinary mice over $\mathbb{R}$, namely levels of $\Lp(\mathbb{R})$, and we do not need $\Theta$-$\g$-organization by Remark \ref{SamePR}. To keep the notations uniform in this section, we will use the notation Lp$^{^\gTheta\Omega}(\mathbb{R},\rm{Code}(\Omega))$ to denote Lp$(\mathbb{R})$ in the case $(\P,\Sigma)=(\emptyset,\emptyset)$.)

$\Omega$ is suitable and $\M_1^{\Omega,\sharp}$ generically interprets $\Omega$. \footnote{By results of \cite{trang2012scales}, $\M_1^{\Omega,\sharp}$ generically interprets $\Omega$ for $(\P,\Sigma)$ being a $\mathcal{G}$-$\Omega$-suitable pair or a hod pair with $\Sigma$ having branch condensation and is $\Omega$-fullness preserving.} Again, we suppress parameter $A$ and let $X$ be defined from $(\Omega,A)$ as before. 
Let $\Lambda$ be the unique $\omega_1+1$-$\Omega$-iteration strategy for $\M_1^{\Omega,\sharp}$.
It can be shown to follow from the hypotheses of Theorems \ref{theorem:ADR-equiconsistency} and \ref{theorem:ADR-DC-equiconsistency}
(in particular using the fact that every uncountable regular cardinal $\le \Theta$ is threadable) that
the iteration strategy $\Lambda$ can be extended to a unique $\Theta+1$-iteration strategy with branch condensation, which we will also call $\Lambda$. (This ``strategy extension'' step is not necessary for the case $(\mathcal{P},\Sigma) = (\emptyset,\emptyset)$, so we postpone its proof until Section \ref{section:Theta-gt-theta-Sigma}.)

As in \cite{trang2012scales}, we use $\Lambda$ to define Lp$^{^\gTheta\Omega}(\mathbb{R},\rm{Code}(\Omega))$. The only thing to check is that $\Theta+1$-iterability is sufficient to run the definition of Lp$^{^\gTheta\Omega}(\mathbb{R},\rm{Code}(\Omega))$ in \cite{trang2012scales}. Suppose by induction, we have defined a level $\M \lhd \rm{Lp}^{^\gTheta\Omega}(\mathbb{R},\rm{Code}(\Omega))$ (in general, the following argument works for any transitive structure $M$ containing $\mathbb{R}$ such that there is a surjection from $\mathbb{R}$ onto $M$) and without loss of generality, we assume $\M$ is a tree activation level $\N_{\alpha+1}$ and we are trying to define the level $\M_{\alpha+1}$ (in the notation of \cite[Definition 3.38]{trang2012scales}); this just means that $\M_{\alpha+1}$ is the first level above $\M$ by which we have fed in all necessary branch information about $\T_\M$. It comes down to defining $\T_\M$ as in Definition \ref{genGenTree}. Working in the model $N = L(\M,\mathbb{R}, f)[\Sigma]$ \footnote{By ``$\Sigma$", we mean the set $\{(\T,\beta) : \beta \in \Sigma(\T)\}$.}, where $f$ is a surjection from $\mathbb{R}$ onto $\M$, we need to see that the genericity iteration that defines $\T_\M$ terminates in less than $\Theta$ many steps. Suppose not, letting $\T\in N$ be the corresponding tree of length $\Theta+1$. In $N$, letting $\gamma$ be a large regular cardinal $>\Theta$, we can construct some $X\prec L_{\gamma}(\M,\mathbb{R},f)[\Sigma]$ that contains all relevant objects (in particular, $\mathbb{R}\cup\M\cup\{\M\}\subset X$) and there is a surjection from $\mathbb{R}$ onto $X$. Let $\pi: M_X\rightarrow X$ be the uncollapse map and let $\xi = \rm{crt}(\pi)$; then $\xi < \Theta$ and $\pi(\xi)\leq \Theta$. We note that $\pi$ can be canonically extended to a map $\pi^+: M_X[G]\rightarrow L_{\gamma}(\M,\mathbb{R},f)[\Sigma][G]$, where $G\subseteq Col(\omega,\mathbb{R})$ is $L(\M,\mathbb{R},f)[\Sigma]$-generic.  We also note that since $\M\cup\{\M\}\subset X$, $\xi > o(\M)$. We can then use standard arguments (cf. \cite[Theorem 3.11]{steel2010outline}), where $X[G]$ plays the role of the countable hull $X$ there, to conclude that lh$(\T)<\Theta$. Contradiction. So $\T_\M$ is defined and has length $< \Theta$.

To prove \eqref{eqn:maximal_model}, we need the following definition.
\begin{definition}\label{def:sLp}
We define $\rm{sLp}^{^\gTheta\Omega}(\mathbb{R},\rm{Code}(\Omega))$ to be the union of those $\M\lhd \rm{Lp}^{^\gTheta\Omega}(\mathbb{R},\rm{Code}(\Omega))$ such that whenever $\pi:\M^*\rightarrow \M$ is elementary, $\P\in \pi^{-1}(\rm{HC})$, and $\M^*$ is countable, transitive, then $\M^*$ is $X$-$\omega_1+1$-iterable with unique strategy $\Lambda$ such that $\Lambda\rest \HC \in \M$.
\end{definition}

We note that $\rm{sLp}^{^\gTheta\Omega}(\mathbb{R},\rm{Code}(\Omega))$ is an initial segment of $\rm{Lp}^{^\gTheta\Omega}(\mathbb{R},\rm{Code}(\Omega))$ \footnote{The initial segment may be strict.}and $\rm{sLp}^{^\gTheta\Omega}(\mathbb{R},\rm{Code}(\Omega))$ is trivially constructibly closed. Also, $\rm{sLp}^{^\gTheta\Omega}(\mathbb{R},\rm{Code}(\Omega))\vDash \Theta =\theta_\Sigma$ and the extender sequence of $\rm{sLp}^{^\gTheta\Omega}(\mathbb{R},\rm{Code}(\Omega))$ is definable over $\rm{sLp}^{^\gTheta\Omega}(\mathbb{R},\rm{Code}(\Omega))$ from $\Omega$, which in turns is definable from $\Sigma$. In this section, we outline the core model induction up to the ``last gap" of $\rm{sLp}^{^\gTheta\Omega}(\mathbb{R},\rm{Code}(\Omega))$. This will show that 
\begin{equation}\label{eqn:sLp_is_maximal_model}
\rm{sLp}^{^\gTheta\Omega}(\mathbb{R},\rm{Code}(\Omega)) \vDash \sf{AD}^+ + \textsf{MC}(\rm{\Sigma}). \footnote{Ordinal definability from $\Sigma$ in the definition of $\sf{MC}(\Sigma)$ is in the language of set theory, not in the language of $\rm{sLp}^{^\gTheta\Omega}(\mathbb{R},\rm{Code}(\Omega))$, but by the paragraph above \ref{def:sLp}, this will not make a difference.}
\end{equation}
From \cite[Theorem 17.1]{DMATM} and \cite{sargsyan2014Rmice}, we know that if $M\vDash ``V = L(\powerset(\mathbb{R})) + \sf{AD}$$^+ + \MC(\Sigma) + \Theta=\theta_\Sigma$", then $M \vDash ``V= L(\rm{sLp}$$^{^\gTheta\Omega}(\mathbb{R},\rm{Code}(\Omega)))$". This and equation \ref{eqn:sLp_is_maximal_model} imply equation \ref{eqn:maximal_model}. It then suffices to prove equation \ref{eqn:sLp_is_maximal_model}. The rest of the section is devoted to this task.

The following definitions are obvious generalizations of those defined in \cite{CMI}.

\begin{definition}
We say that the coarse mouse witness condition $W^{*,^\g\Omega}_\gamma$ holds if, whenever $U\subseteq \mathbb{R}$ and both $U$ and its complement have scales in $\rm{Lp}$$^{^\gTheta\Omega}(\mathbb{R},\rm{Code}(\Omega))|\gamma$, then for all $k< \omega$ and $x \in \mathbb{R}$ there is a coarse $(k,U)$-Woodin $^\g\Omega$-mouse\footnote{This is the same as the usual notion of a $(k,U)$-Woodin mouse, except that we demand the mouse is a $\g$-organized $\Omega$-mouse.} containing $x$ with an $(\omega_1 + 1)$-iteration $^\g\Omega$-strategy\footnote{In our context, where $\omega_1$ is measurable, this is equivalent to $\omega_1$-iterability.} whose restriction to $H_{\omega_1}$ is in $\rm{Lp}$$^{^\gTheta\Omega}(\mathbb{R},\rm{Code}(\Omega))|\gamma$.
\end{definition}
\begin{remark}
By the proof of \cite[Lemma 3.3.5]{CMI}, $W^{*,^\g\Omega}_\gamma$ implies $\rm{Lp}$$^{^\gTheta\Omega}(\mathbb{R},\rm{Code}(\Omega))|\gamma \vDash \sf{AD}$.
\end{remark}
\begin{definition}
An ordinal $\gamma$ is a \textbf{critical ordinal} in $\rm{Lp}$$^{^\gTheta\Omega}(\mathbb{R},\rm{Code}(\Omega))$ if there is some $U \subseteq \mathbb{R}$
such that $U$ and $\mathbb{R} \backslash U$ have scales in $\rm{Lp}$$^{^\gTheta\Omega}(\mathbb{R},\rm{Code}(\Omega))|(\gamma + 1)$ but not in $\rm{Lp}$$^{^\gTheta\Omega}(\mathbb{R},\rm{Code}(\Omega))|\gamma$. In other words, $\gamma$
is critical in $\rm{Lp}$$^{^\gTheta\Omega}(\mathbb{R},\rm{Code}(\Omega))$ just in case $W^{*,^\g\Omega}_{\gamma+1}$ does not follow trivially from $W^{*,^\g\Omega}_{\gamma}$.
\end{definition}

To any $\Sigma_1$ formula $\theta(v)$ in the language of Lp$^{^\gTheta\Omega}(\mathbb{R},\rm{Code}(\Omega))$ we associate formulae $\theta_k(v)$ for $k \in \omega$, such that $\theta_k$ is $\Sigma_k$, and for any $\gamma$ and any real $x$,
\begin{center}
Lp$^{^\gTheta\Omega}(\mathbb{R},\rm{Code}(\Omega))|(\gamma+1) \vDash \theta[x] \Leftrightarrow \exists k < \omega$ Lp$^{^\gTheta\Omega}(\mathbb{R},\rm{Code}(\Omega))|\gamma \vDash \theta_k[x].$
\end{center}
\begin{definition}\label{prewitness}
Suppose $\theta(v)$ is a $\Sigma_1$ formula (in the language of set theory expanded by a name for $\mathbb{R}$ and a predicate for $^\gTheta\Omega$), and $z$ is a real; then a \textbf{$\langle\theta,z\rangle$-prewitness} is an $\omega$-sound $g$-organized $\Omega$-premouse $N$ over $z$ in which there are $\delta_0 < \dots < \delta_9$, $S$, and $T$ such that $N$ satisfies the formulae expressing
\begin{enumerate}[(a)]
\item $\sf{ZFC}$,
\item $\delta_0, ..., \delta_9$ are Woodin,
\item $S$ and $T$ are trees on some $\omega\times \eta$ which are absolutely complementing in $V^{\rm{Col}(\omega,\delta_9)}$, and
\item For some $k <\omega$, $p[T]$ is the $\Sigma_{k+3}$-theory (in the language with names for each real and predicate for $^\gTheta\Omega$) of Lp$^{^\gTheta\Omega}(\mathbb{R},\rm{Code}(\Omega))|\gamma$, where $\gamma$ is least such that Lp$^{^\gTheta\Omega}(\mathbb{R},\rm{Code}(\Omega))|\gamma \vDash \theta_k[z]$.
\end{enumerate}
If $N$ is also $(\omega, \omega_1, \omega_1 + 1)$-iterable (as a $\g$-organized $\Omega$-mouse), then we call it a \textbf{$\langle\theta, z\rangle$-witness}.
\end{definition}

\begin{definition}\label{def:fine_mouse_witness_cond}
We say that the fine mouse witness condition $W_\gamma^{^g\Omega}$ holds if whenever $\theta(v)$ is a $\Sigma_1$ formula (in the language $\mathcal{L}^+$ of $\g$-organized $\Omega$-premice (cf. \cite{trang2012scales}), $z$ is a real, and Lp$^{^\gTheta\Omega}(\mathbb{R},\rm{Code}(\Omega))|\gamma\vDash \theta[z]$, then there is a $\langle\theta,z\rangle$-witness $\N$ whose $^{^g}\Omega$-iteration strategy, when restricted to countable trees on $\N$, is in Lp$^{^\gTheta\Omega}(\mathbb{R},\rm{Code}(\Omega))|\gamma$.
\end{definition}

\begin{lemma}
$W_\gamma^{*,^g\Omega}\rightarrow W_\gamma^{^g\Omega}$ for limit $\gamma$.
\end{lemma}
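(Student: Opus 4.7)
The plan is to adapt the standard argument of Schindler--Steel \cite{CMI} for converting coarse mouse witnesses into fine ones at limit levels, now carried out inside the $g$-organized $\Omega$-hierarchy as developed in \cite{trang2012scales} and \cite{operator_mice}. Fix a $\Sigma_1$ formula $\theta(v)$ in $\mathcal{L}^+$, a real $z$, and assume $\mathrm{Lp}^{^\gTheta\Omega}(\mathbb{R}, \Code(\Omega))|\gamma \vDash \theta[z]$. Because $\gamma$ is a limit, using the sequence $(\theta_k)_{k<\omega}$ associated to $\theta$ one can choose the least pair $(k, \gamma_0)$ with $\gamma_0 < \gamma$ and $\mathrm{Lp}^{^\gTheta\Omega}(\mathbb{R}, \Code(\Omega))|\gamma_0 \vDash \theta_k[z]$. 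Let $U \subseteq \mathbb{R}$ code the $\Sigma_{k+3}$-theory of $\mathrm{Lp}^{^\gTheta\Omega}(\mathbb{R}, \Code(\Omega))|\gamma_0$ in the language with a name for each real and a predicate for $^\gTheta\Omega$. By the scale analysis of the $^\gTheta\Omega$-hierarchy (as in \cite{K(R)}, \cite{Scalesendgap}, and \cite{trang2012scales}), both $U$ and $\mathbb{R} \setminus U$ admit scales inside $\mathrm{Lp}^{^\gTheta\Omega}(\mathbb{R}, \Code(\Omega))|\gamma$.

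Next, invoke the hypothesis $W^{*,^g\Omega}_\gamma$ to produce a coarse $(11, U)$-Woodin $g$-organized $\Omega$-mouse $N$ containing $z$ whose $(\omega_1 + 1)$-iteration strategy $\Psi$ satisfies $\Psi \restriction H_{\omega_1} \in \mathrm{Lp}^{^\gTheta\Omega}(\mathbb{R}, \Code(\Omega))|\gamma$. Unpacking the definition, $N$ has eleven Woodin cardinals $\delta_0 < \cdots < \delta_{10}$ and absolutely complementing trees $S^{*}, T^{*} \in N$ on $\omega \times \eta$, for some $\eta < \delta_{10}$, such that $p[T^{*}] = U$ in $N^{\mathrm{Col}(\omega, \delta_{10})}$.

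Working inside $N$, run the fully-backgrounded fine-structural $K^{c, {^g}\Omega}$-construction over $z$ below $\delta_{10}$, using extenders on the sequence of $N$ with critical points above $\eta$ as background certificates, in the style of \cite{operator_mice}. Since $\Omega$ relativizes well, determines itself on generic extensions, and condenses finely, the output $\mathcal{N}$ is a $g$-organized $\Omega$-premouse in which the ten Woodin cardinals $\delta_0 < \cdots < \delta_9$ of $N$ are preserved; a standard universality argument ensures that they are not collapsed by the construction. Absorbing $S^{*}, T^{*}$ into $\mathcal{N}$ (or re-deriving complementing trees from the $S^{*}, T^{*}$ using the top Woodin of $\mathcal{N}$) yields the trees $S, T$ required by clause (d) of Definition \ref{prewitness}. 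Passing to an $\omega$-sound level projecting to $z$ then gives a $\langle\theta, z\rangle$-prewitness $\mathcal{M}$.

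Finally, iterability of $\mathcal{M}$ is obtained by the usual background-certificate lifting argument: any countable iteration tree on $\mathcal{M}$ lifts to a countable tree on $N$, and a cofinal branch through the latter (provided by $\Psi$) induces a cofinal branch through the former. Consequently a countable-tree $^g\Omega$-iteration strategy $\Lambda^{*}$ for $\mathcal{M}$ is definable from $\Psi \restriction H_{\omega_1}$, from $\Omega \restriction H_{\omega_1}$, and from the $K^{c, {^g}\Omega}$-construction, so $\Lambda^{*} \in \mathrm{Lp}^{^\gTheta\Omega}(\mathbb{R}, \Code(\Omega))|\gamma$, upgrading $\mathcal{M}$ to a genuine $\langle\theta, z\rangle$-witness with the required strategy-definability. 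The main technical obstacle is twofold: first, verifying that the $K^{c, {^g}\Omega}$-construction inside $N$ does not collapse any of $\delta_0, \dots, \delta_9$, and second, that the output is indeed a $g$-organized $\Omega$-mouse (rather than merely a $\g$-organized premouse using partial fragments of $\Omega$). Both points are handled by the condensation and self-determination properties of $\Omega$ summarized in Section \ref{cmi_operators} together with the universality/covering framework of \cite{operator_mice}.
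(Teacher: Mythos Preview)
Your proposal is correct and follows exactly the approach the paper invokes: the paper's own ``proof'' is a one-line reference to \cite[Lemma~3.5.4]{CMI}, noting only that the adaptation goes through because $g$-organized $\Omega$-mice behave well under set-generic extensions (i.e., $\P[h]$ can be rearranged as a $g$-organized $\Omega$-mouse over $h$). Your write-up is a faithful expansion of that reference; the one point you might highlight more explicitly is this generic-extension rearrangement property, since it is precisely what the paper singles out as the reason the CMI argument transfers to the $g$-organized setting.
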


The proof of the above lemma is a straightforward adaptation of that of \cite[Lemma 3.5.4]{CMI}. One main point is the use of the $g$-organization: $g$-organized $\Omega$ mice behave well with respect to generic extensions in the sense that if $\P$ is a $g$-organized $\Omega$ mouse and $h$ is set generic over $\P$ then $\P[h]$ can be rearranged to a $g$-organized $\Omega$ mouse over $h$.

The induction is guided by the pattern of scales in Lp$^{^\gTheta\Omega}(\mathbb{R},\rm{Code}(\Omega))$ as analyzed in \cite{trang2012scales}. To show $\sf{AD}^+ + \sf{SMC}$ holds in $\rm{sLp}^{^\gTheta\Omega}(\mathbb{R},\rm{Code}(\Omega))$, we show $\rm{sLp}^{^\gTheta\Omega}(\mathbb{R},\rm{Code}(\Omega))\vDash \forall \alpha (\alpha \textrm{ is critical} \rightarrow W_\alpha^{*,^g\Omega}$). Our plan is to show $W^{*,^\g\Omega}_{\alpha+1}$ assuming $W^{*,^\g\Omega}_{\alpha}$ for $\alpha$ critical. Lemma \ref{lemma-not-square-implies-M1Fsharp} and the subsequent corollary provide the base case for our induction. For $\alpha > 0$, we have three cases: 
\begin{enumerate}
\item $\alpha$ is a successor of a critical ordinal or $\alpha$ is a limit of critical ordinals and cof$(\alpha)$ = $\omega$; 
\item $\alpha$ is inadmissible, limit of critical ordinals, cof$(\alpha) > \omega$
\item $\alpha$ ends a weak gap or successor of an ordinal that ends a strong gap. Say the gap is $[\gamma,\alpha^*]$, where $\alpha^*= \alpha$ if the gap is weak and $\alpha^*+1=\alpha$ if the gap is strong. Furthermore, $\rm{sLp}^{^\gTheta\Omega}(\mathbb{R},\rm{Code}(\Omega))|\alpha \vDash \sf{MC}$$(\Sigma) + \sf{AD}^+ + \Theta = \theta_\Sigma$.
\end{enumerate}
\indent We deal with the easy case (case 1) first. In this case, let $\Gamma = \Sigma_1(\rm{sLp}^{^\gTheta\Omega}(\mathbb{R},\rm{Code}(\Omega))|\alpha)$. Then $C_{\Gamma} = \bigcup_{n<\omega}C_{\Gamma_n}$ for some increasing sequence of scaled pointclasses $\langle \Gamma_n \ | \ n < \omega\rangle$. By $W^{*,^g\Omega}_\alpha$, for each $n$, we have $\Sigma$-cmi operators $\langle J^n_m \ | \ m < \omega\rangle$ that collectively witness $\sf{AD}$$^{\Gamma_n}$. Say each $J^n_m$ defined on a cone above some fixed $a\in \HC$. The desired mouse operator $J_0$ is defined as follows: For each transitive and self-wellordered $A \in \HC$ coding $a$, $J_0(A)$ is the shortest initial segment $\M \triangleleft \rm{Lp}^{^\g\Omega}(A)$ such that $\M \vDash \sf{ZFC}^-$ and $\M$ is closed under $J^m_n$ for all $m,n$. $J_0$ is total and trivially relativizes well and determines itself on generic extensions because the $J^m_n$'s have these properties. We then use Lemma \ref{lemma-not-square-implies-M1Fsharp} to get that $J_1 = \M_1^{\sharp,J_0}$ is defined on the cone above $a$ by arguments in the previous section. Inductively, we get that $J_{n+1}= \M_1^{\sharp,J_n}$ is defined on the cone above $a$ for all $n$ and one easily gets that these operators are $\Sigma$-cmi operators. By Lemma 4.1.3 of \cite{CMI}, this implies $W^{*,^\g\Omega}_{\alpha+1}$.
\\
\indent Now we're on to the case where $\alpha$ is inadmissible and cof$(\alpha) > \omega$. Let $\phi(v_0,v_1)$ be a $\Sigma_1$ formula and $x\in \mathbb{R}$ be such that 
\begin{equation*}
\forall y\in \mathbb{R} \ \exists \beta < \alpha \ \rm{sLp}^{^\gTheta\Omega}(\mathbb{R},\rm{Code}(\Omega))|\beta\vDash \phi[x,y],
\end{equation*}
and letting $\beta(x,y)$ be the least such $\beta$,
\begin{equation*}
\alpha = \textrm{sup}\{\beta(x,y) \ | \ y \in \mathbb{R}\}.
\end{equation*}
We first define $J_0$ on transitive and self-wellordered $A \in \HC$ coding $x$. For $n < \omega$, let
\begin{equation*}
\phi^*(n) \equiv \exists \gamma (\textrm{Lp}^{^\gTheta\Omega}(\mathbb{R},\rm{Code}(\Omega))|\gamma \vDash \forall i\in \omega(i>0 \Rightarrow \phi((v)_0,(v)_1) \wedge (\gamma+\omega n) \textrm{ exists})).
\end{equation*}
For such an $A$ as above, let $\M$ be an $A$-premouse and $G$ be a Col$(\omega,A)$-generic over $\M$, then $\M[G]$ can be regarded as a $\g$-organized $\Omega$-mouse over $z(G,A)$ where $z(G,A)$ is a real coding $G,A$ and is obtained from $G,A$ in some simple fashion.\footnote{This is one of the main reasons that we consider $^g\Omega$-mice; this is so that generic extensions of $^g\Omega$-mice can be rearranged to $^g\Omega$-mice.} Also, let $\sigma_A$ be a term defined uniformly (in $\M$) from $A, x$ such that
\begin{equation*}
(\sigma_A^G)_0 = x
\end{equation*}
and
\begin{equation*}
\{(\sigma_A^G)_i \ | \ i > 0\} = \{ \rho^G \ | \ \rho \in L_1(A) \wedge \rho^G \in \mathbb{R}\}.
\end{equation*}
Let $\varphi$ be a sentence in the language of $A$-premice such that for any $A$-premouse $\M$, $\M \vDash \varphi$ iff whenever $G$ is $\M$-generic for $Col(\omega,A)$, then for any $n$ there is a $\gamma < o(\M)$ such that
\begin{equation*}
\M[z(G,A)]|\gamma \textrm{ is a } \langle\phi^*_n,\rho_A^G\rangle\textrm{-prewitness}.
\end{equation*}

Then $J_0(A)$ is the shortest initial segment of $\rm{Lp}^{^\g\Omega}(A)$ which satisfies $\varphi$, if it exists, and is undefined otherwise. Using the fact that $W_\alpha^{^g\Omega}$ holds, we get that  $J_0(A)$ exists for all $A\in \HC$ coding $x$ because $\alpha$ has uncountable cofinality and there are only countably many $\langle\phi^*_n,\rho_A^G\rangle$.  Also we can then define $J_n$ as before. It's easy to show again that the $J_n$'s relativize well and determine themselves on generic extensions; and so they are $\Sigma$-cmi operators. This implies $W_{\alpha+1}^{*,^g\Omega}$.
\\
\indent Lastly, we consider the gap case. Using the notations as in (3) above, let $\Gamma = \Sigma_1^{\rm{sLp}^{^\gTheta\Omega}(\mathbb{R},\rm{Code}(\Omega))|\gamma}$ and $T$ be a tree projecting to a universal $\Gamma$ set. If $[\gamma,\alpha^*]$ is a weak gap, by the scales analysis at the end of a weak gap from \cite{Scalesendgap} and \cite{trang2012scales}, we can construct a self-justifying system (sjs) $\mathcal{A}$ Wadge-cofinal in $\powerset(\mathbb{R})\cap \rm{sLp}$$^{^\gTheta\Omega}(\mathbb{R},\rm{Code}(\Omega))|\alpha^*$.\footnote{This means $\mathcal{A}$ is a countable collection containing a universal $\Sigma_1^{\rm{sLp}^{^\gTheta\Omega}(\mathbb{R})|\gamma}$ set, closed under complements and whenever $A\in\mathcal{A}$, then there is a scale whose individual norms are coded by sets in $\mathcal{A}$.} If $[\gamma,\alpha^*]$ is a strong gap, by the Kechris-Woodin theorem, $\sf{AD}^+$ holds in $\rm{sLp}$$^{^\gTheta\Omega}(\mathbb{R},\rm{Code}(\Omega))|\alpha$, and again by results of \cite{Scalesendgap}, \cite{trang2012scales}, and \cite{wilson2015envelope}, we also get a self-justifying-system $\mathcal{A}$ Wadge-cofinal in $\rm{sLp}$$^{^\gTheta\Omega}(\mathbb{R},\rm{Code}(\Omega))|\alpha\cap \powerset(\mathbb{R})$.  From $\mathcal{A}$ and arguments in \cite[Section 5]{CMI}, there is a pair $(\Q,\Lambda)$ such that $\Q$ is $\Gamma$-suitable and $\Lambda$ is the $(\omega_1,\omega_1)$-strategy for $\Q$ guided by $\mathcal{A}$. Let $J_0 = \Lambda$. 
\begin{claim}\label{FDetGenExts}
$J_0$ determines itself on generic extensions.
\end{claim}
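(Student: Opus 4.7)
The plan is to verify the claim by showing that the sjs-guidance rule defining $\Lambda$ from $\mathcal{A}$ is uniformly first-order expressible in any $\sf{ZFC}^-$ model that contains the sjs data, and that this expression remains correct in set-generic extensions. The parameter witnessing self-determination will be $c = (\Q, \vec{T})$, where $\vec{T} = \langle T_i, U_i : i < \omega\rangle$ is a system of trees Suslin-representing the sets $A_i \in \mathcal{A}$ and their complements in the surrounding $\sf{AD}^+$-model, together with the $A_i$-term relations these trees induce on $\Gamma$-suitable premice.

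First, I would recall the defining property of $\Lambda$: for a countable tree $\T$ of limit length on a $\Lambda$-iterate of $\Q$, $\Lambda(\T)$ is the unique cofinal branch $b$ such that $\M^\T_b$ is $\Gamma$-suitable and, for each $i$, the $\M^\T_b$-term relation $\tau_i^{\M^\T_b}$ for $A_i$ agrees, via $i^\T_b$, with the tail term relation read off along $\T$ (the standard ``branch identification'' from \cite{CMI} and \cite{ATHM}). Each $\tau_i^\R$ on a $\Gamma$-suitable $\R$ is computable from $\R$ and $T_i$ via the usual ``fullness'' formula, so the entire guidance condition is first-order over any model containing $\vec{T}$ and closed under $\Lambda$.

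Second, for the generic absoluteness step, I would use that, because $\mathcal{A}$ is a sjs Wadge-cofinal in a determined pointclass inside our $\sf{AD}^+$-background, each $A_i$ is Suslin–co-Suslin via $(T_i, U_i)$ with the pair $(T_i, U_i)$ absolutely complementing after any set forcing of bounded size in $N$; equivalently, each $A_i$ is universally Baire in the relevant range. Consequently, in any set-generic extension $N[g]$, the same trees $T_i$ continue to compute the correct extensions of the term relations $\tau_i^\R$ on $\Lambda$-iterates $\R$ of $\Q$ that lie in $N[g]$. Combining this with the first step, define $\Phi(\T, b)$ over $N[g]$ to assert that $b$ is a cofinal branch of $\T$, that $\M^\T_b$ is $\Gamma$-suitable (as checked against $\vec{T}$), and that for each $i$, $i^\T_b$ maps the tail term relation computed along $\T$ to $\tau_i^{\M^\T_b}$. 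Then $\Phi$ defines $\Lambda \cap N[g]$ from $c$ over $N[g]$; uniqueness of such $b$ is the sjs-guidance property, which is inherited by $\Lambda$ from branch condensation of the members of $\mathcal{A}$.

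The main obstacle will be the second step, namely verifying the required generic absoluteness of $\vec{T}$: that $p[T_i]$ and $p[U_i]$ remain complementing after set forcing over $N$, and that the term-relation reading is preserved. This is where we essentially use that $\mathcal{A}$ is self-justifying and Wadge-cofinal in $\bfEnv(\Gamma)$ inside an $\sf{AD}^+$-model, which yields the needed homogeneously/universally Baire representations of the $A_i$, together with the standard arguments of \cite{ATHM} showing that sjs-guided strategies on suitable premice transfer correctly to generic extensions. Once this is in hand, the definition of $\Lambda \cap N[g]$ via $\Phi$ is routine, and the claim follows.
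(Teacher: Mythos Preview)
Your approach differs substantially from the paper's, and the obstacle you flag in your second step is real and not discharged in your sketch.

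The paper argues as follows. The parameter is the single tree $T$ projecting to a universal $\Gamma$ set (already fixed earlier in the section), not a full system $\vec T$ for the sjs. Given a transitive $N\models\sf{ZFC}^-$ with $\omega_1^V\subset N$, $T\in N$, and $N$ closed under $J_0$ (hence under $\Lambda$), and a limit-length tree $\T\in N[g]$ according to $\Lambda$, the paper splits on whether $\T$ is short or maximal. If $\T$ is short, $T$ is used to locate the $\Q$-structure $\Q(\T)$, which determines $b=\Lambda(\T)$ in $N[g]$. If $\T$ is maximal, the paper invokes \emph{boolean comparison} (cf.\ \cite[Section 5.4]{CMI}): since $\omega_1^V$ is inaccessible in $N$, one produces inside $N$ a nondropping $\Lambda$-tree $\U$ of length ${<}\,\omega_1^V$ with last model $\M^\U$ and branch embedding $\pi^\U$, such that $\Lambda(\T)$ is the unique cofinal branch $b$ for which there exists $\tau:\M^\T_b\to\M^\U$ with $\pi^\U=\tau\circ\pi^\T_b$. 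This factoring condition is first-order in $N[g]$ once $\U\in N$ is fixed.

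Your route instead tries to re-express the sjs-guidance rule directly in $N[g]$ via pairs $(T_i,U_i)$ for each $A_i$. This needs (a) that these pairs remain complementing after arbitrary set forcing over any such $N$, and (b) that ``$\M^\T_b$ is $\Gamma$-suitable'' together with the term-relation computation is first-order in $N[g]$ from your parameter. You identify (a) as the main obstacle but only gesture at universally Baire representations; and (b) hides fullness conditions (agreement with $\Lp_+^{^\g\Omega,\Gamma}$ at strong cutpoints, correctness of $C_\Gamma$) that are not obviously recoverable from $\vec T$ alone. The paper's boolean-comparison argument sidesteps both: it needs only the one tree $T$ for the short case, and in the maximal case it replaces any direct verification of suitability or term relations in $N[g]$ by the existence of a factor map into a fixed $\Lambda$-iterate already living in the ground model $N$.
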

\begin{proof}
Let $\kappa=_{\rm{def}}\omega_1^V \subset N$ be a transitive structure of $\sf{ZFC}^-$ such that $N$ is closed under $J_0$ (and hence under $\Lambda$); we may assume also that $T\in N$. We simply describe a procedure that determines $\Lambda$ on generic extensions of $N$; the reader may gladly verify that this is enough to prove the claim. Let $g\in V$ be generic over $N$ and let $\T$ be a tree according to $\Lambda$ of limit length in $N[g]$ (the argument for stacks is similar). If $\T$ is short, using $T$, we can find the $\Q$-structure $\Q(\T)$ for $\T$ and this in turns determines the branch $b=\Lambda(\T)\in N[g]$. Suppose $\T$ is maximal. By boolean comparison (cf. \cite[Section 5.4]{CMI}), we can find a tree $\U\in N$ according to $\Lambda$ of length $< \kappa$ \footnote{This uses that $\kappa$ is inaccessible in $N$.} such that 
\begin{enumerate}[(i)]
\item $\U$ is nondropping with last model $\M^\U$ and branch embedding $\pi^\U$;
\item $\Lambda(\T)=b$ is the unique branch in $N[g]$ with last model $\M^\T$ and branch embedding $\pi^\T$ such that there is an embedding $\tau:\M^\T\rightarrow \M^\U$ $\pi^\U =\tau\circ \pi^\T$. 
\end{enumerate}
\end{proof}
Furthermore, $J_0$ is suitable (we can construct $\M_1^{J_0,\sharp}$ by arguments in the previous section) and $\M_1^{J_0,\sharp}$ generically interprets $J_0$ by \cite[Lemma 4.8]{trang2012scales}. Note that $J_0$ and $\cA$ are projectively equivalent in any reasonable coding. We can use Lemma \ref{lemma-not-square-implies-M1Fsharp} to show $W^{*,^g\Omega}_{\alpha+1}$ by constructing a sequence of operators $(J_n : n<\omega)$, where $J_{n+1} = \M_1^{J_n,\sharp}$ for all $n$. \footnote{These operators, again, can be shown to be $\Sigma$-cmi operators. Here and elsewhere, we suppress the formula $\varphi_{\rm{min}}$ defined in \cite[Definition 3.2]{trang2012scales} from the definition of $J_1= \M_1^{J_0,\sharp}$; to be entirely correct, according to \cite{trang2012scales}, $J_1$ should be $\M_1^{(J_0,\varphi_{\rm{min}}),\sharp}$.}

It now follows easily that we can strengthen the conclusion of Corollary \ref{corollary:not-square-implies-PD} to obtain the following result.

\begin{corollary}\label{corollary:not-square-implies-AD-LR}
 Assume $\mathsf{ZF} + \mathsf{DC}_{\powerset(\omega_1)} + {}$``$\omega_1$ is $\mathbb{R}$-strongly compact and $\neg\square_{\omega_1}$.'' Then $\mathsf{AD}$ holds in $L(\mathbb{R})$.
\end{corollary}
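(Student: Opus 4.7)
The plan is to apply the core model induction developed in this section in the trivial case $(\mathcal{P},\Sigma) = (\emptyset,\emptyset)$, where $\Sigma$-cmi operators reduce to ordinary cmi operators, $^\g\Omega$-premice reduce to ordinary premice, and by Remark \ref{SamePR} the hierarchy $\mathrm{Lp}^{^\gTheta\Omega}(\mathbb{R},\mathrm{Code}(\Omega))$ can be identified with the usual $\mathrm{Lp}(\mathbb{R})$. The relevant maximal pointclass is
\[\Omega_\emptyset = \bigcup\{\powerset(\mathbb{R})\cap L(A,\mathbb{R}) : A\subseteq\mathbb{R},\ L(A,\mathbb{R}) \vDash \mathsf{AD}^+\},\]
and the target, via equation (4.1) specialized to $\Sigma = \emptyset$, is $L(\Omega_\emptyset,\mathbb{R}) \vDash \mathsf{AD}^+$.

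First, the base case of the induction is supplied by Corollary \ref{corollary:not-square-implies-PD}, which gives that $\mathcal{M}_n^\sharp$ is total on $H(\omega_1)$ for every $n<\omega$, and hence $W_\alpha^{*,^g\Omega}$ holds through the projective stages. The induction step then proceeds through the three cases set out above: at successor and countable-cofinality-limit-of-criticals steps we assemble the previously produced cmi operators into a single $J_0$ and iterate via $J_{n+1} = \mathcal{M}_1^{J_n,\sharp}$; at uncountable-cofinality inadmissible limit steps we define $J_0$ by a $\Sigma_1$-reflection argument through $\mathrm{Lp}(\mathbb{R})|\alpha$; and at gap endpoints we construct a $1$-suitable premouse $\mathcal{Q}$ together with a strategy $\Lambda$ guided by a self-justifying system Wadge-cofinal in the pointclass of the gap, and set $J_0 = \Lambda$. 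In every case the key passage $J_n \mapsto J_{n+1} = \mathcal{M}_1^{J_n,\sharp}$ is justified by Lemma \ref{lemma-not-square-implies-M1Fsharp}, which is exactly where the hypotheses ``$\omega_1$ is $\mathbb{R}$-strongly compact'' and $\neg\square_{\omega_1}$ are used.

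Carrying the induction through every critical stage yields that the scaled portion of $\mathrm{Lp}(\mathbb{R})$ satisfies $\mathsf{AD}^+ + \mathsf{MC}$, and hence by \cite{DMATM} also $L(\Omega_\emptyset,\mathbb{R}) \vDash \mathsf{AD}^+$. Since $L(\mathbb{R}) \subseteq L(\Omega_\emptyset,\mathbb{R})$ as transitive classes with the same reals, every $A \in L(\mathbb{R}) \cap \powerset(\mathbb{R}^\omega)$ is determined in $L(\Omega_\emptyset,\mathbb{R})$; as winning strategies for integer games are coded by reals and hence automatically lie in $L(\mathbb{R})$, and since ``$\sigma$ wins on $A$'' is absolute between transitive classes containing $\sigma$ and $A$, we conclude $L(\mathbb{R}) \vDash \mathsf{AD}$.

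I expect the main obstacle to be the gap-endpoint step: constructing the guided strategy $\Lambda$ and verifying that it is a cmi operator to which Lemma \ref{lemma-not-square-implies-M1Fsharp} applies---specifically, that $\Lambda$ determines itself on generic extensions (as in Claim \ref{FDetGenExts}) and that $\mathcal{M}_1^{\Lambda,\sharp}$ generically interprets it. These verifications are delicate, but they are handled uniformly by the machinery of $\g$-organized hybrid mice developed in \cite{trang2012scales}, here trivialized to the ordinary-mouse setting.
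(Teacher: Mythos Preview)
Your proposal is correct and follows exactly the approach the paper intends: the corollary is stated in the paper with no proof beyond ``it now follows easily,'' and what you have written is precisely the specialization of Section~4 to $(\mathcal{P},\Sigma)=(\emptyset,\emptyset)$ together with the downward-absoluteness argument for $\mathsf{AD}$ to $L(\mathbb{R})$. One small slip: your displayed definition of $\Omega_\emptyset$ omits the clauses $\Theta=\theta_0$ and $\mathsf{MC}$ from Definition~\ref{def:max_sigma} (what you wrote is the paper's $\Omega^*$, not $\Omega_\Sigma$), but this does not affect the argument, since for the conclusion you only need \emph{some} model of $\mathsf{AD}^+$ containing all reals and all sets of reals of $L(\mathbb{R})$---and $\mathrm{sLp}(\mathbb{R})$ already does that directly, as the passive levels $J_\alpha(\mathbb{R})$ projecting to $\mathbb{R}$ are trivially self-iterable and hence lie in $\mathrm{sLp}(\mathbb{R})$.
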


This corollary completes the proof of Theorem \ref{theorem:AD-equiconsistency}.  It also forms a significant first step in the proofs of Theorems \ref{theorem:ADR-equiconsistency} and \ref{theorem:ADR-DC-equiconsistency}.

\section{A MODEL OF $\mathsf{AD}^+ + \Theta > \theta_\Sigma$}\label{section:Theta-gt-theta-Sigma}

Suppose $(\P,\Sigma)$ is a $\G$-$\Omega^*$-suitable pair for some nice operator $\G$ such that $\Sigma$ has branch condensation and is $\Omega^*$-fullness preserving.
As a special case we also allow $(\P,\Sigma) = (\emptyset, \emptyset)$.
In the previous section we showed (under our strong hypotheses plus a smallness assumption)
that there is a maximal model of $\mathsf{AD}^+ + V = L(\powerset(\mathbb{R})) + \Theta = \theta_\Sigma$ containing all reals and ordinals.
This model has the form $L(\Omega_\Sigma,\mathbb{R})$ where
$L(\Omega_\Sigma,\mathbb{R}) \cap \powerset(\mathbb{R}) = \Omega_\Sigma$.
In this section, we will go just beyond this model to obtain a model of $\mathsf{AD}^+ + \Theta > \theta_\Sigma$ containing all reals and ordinals.

Define the pointclass 
\[\Gamma = (\Sigma^2_1(\Code(\Sigma))^{\Omega^*}.\]
Note that
we have $\Gamma = (\Sigma^2_1(\Code(\Sigma))^{\Omega_\Sigma}$; this is because if a set of reals $A \in \Omega^*$ witnesses a $\Sigma^2_1(\Code(\Sigma))$ fact about a real $x$, then there is a set of reals in 
$\Delta^2_1(\Code(\Sigma), x)^{L(A,\mathbb{R})}$ witnessing the same fact about $x$ by Woodin's $\Delta^2_1$ basis theorem relativized to $x$ and $\Code(\Sigma)$ and applied in the model $L(A,\mathbb{R})$, and such a set of reals can be shown to be in $\Omega_\Sigma$.

Recall from Section \ref{CMIInsideLp} that (under our smallness assumption) the maximal model $L(\Omega_\Sigma,\mathbb{R})$ of $\mathsf{AD}^+ + \Theta = \theta_\Sigma$ is, up to its $\Theta$, a hybrid mouse over $\mathbb{R}$ of the form $\rm{sLp}^{^\gTheta\Omega}(\mathbb{R},\rm{Code}(\Omega))$
where we have defined the operator $\Omega=\Sigma$. We remind the reader that Code$(\Omega)$ is self-scaled.

In particular we have
\[ \Omega_\Sigma = \powerset(\mathbb{R}) \cap \rm{sLp}^{^\gTheta\Omega}(\mathbb{R},\rm{Code}(\Omega)),\]
so we can reformulate our pointclass as
\[ \Gamma = (\Sigma^2_1)^{\rm{sLp}^{^\gTheta\Omega}(\mathbb{R},\rm{Code}(\Omega))} = (\Sigma^2_1)^{\rm{sLp}^{^\gTheta\Omega}(\mathbb{R},\rm{Code}(\Omega))|\alpha} \]
where $\alpha = ({\bf \delta}^2_1)^{\rm{sLp}^{^\gTheta\Omega}(\mathbb{R},\rm{Code}(\Omega))}$ is the ordinal beginning the last gap of $\rm{sLp}^{^\gTheta\Omega}(\mathbb{R},\rm{Code}(\Omega))$.
(Recall that by $\Sigma^2_1$ we mean to include $\Omega$, or equivalently $\Sigma$, as a parameter. By self-iterability it makes no difference whether we also include the extender sequence as a parameter.)

Like the pointclass considered in the ``gap in scales'' case of the core model induction in Section \ref{CMIInsideLp}, the pointclass $\Gamma$
is an inductive-like pointclass with the scale property.
Our next task is to find the next scaled pointclass, or (what is roughly equivalent) to build a scale on a complete $\check{\Gamma}$ set.
Unlike in Section \ref{CMIInsideLp}, this next scaled pointclass cannot be found within $\rm{sLp}^{^\gTheta\Omega}(\mathbb{R},\rm{Code}(\Omega))$.  The reason is that 
the complete $\check{\Gamma}$ set $\big\{(x,y) \in \mathbb{R} \times \mathbb{R} : y \notin \OD_x^{\rm{sLp}^{^\gTheta\Omega}(\mathbb{R},\rm{Code}(\Omega))}\big\}$ cannot have any uniformization in $\rm{sLp}^{^\gTheta\Omega}(\mathbb{R},\rm{Code}(\Omega))$, and therefore cannot have any scale in $\rm{sLp}^{^\gTheta\Omega}(\mathbb{R},\rm{Code}(\Omega))$, by a standard argument.

We will use our strong hypotheses (as in Theorems \ref{theorem:ADR-equiconsistency} and \ref{theorem:ADR-DC-equiconsistency})
to build a scale on a complete $\check{\Gamma}$ set.
Each prewellordering of this scale will be in $L(\Omega_\Sigma,\mathbb{R})$, or equivalently in $\rm{sLp}^{^\gTheta\Omega}(\mathbb{R},\rm{Code}(\Omega))$, although the sequence of prewellorderings cannot be, as we just saw.

More directly, what we will show is that the prewellorderings are in a pointclass $\Env(\Gamma)$, the envelope of $\Gamma$.
This notion was used by Martin to identify the next scaled pointclass after an inductive-like scaled pointclass in the $\mathsf{AD}$ context; see Jackson \cite{Jackson}.
We will need its adaptation to the partial determinacy context as defined in the second author's thesis \cite{wilson2012contributions} (see also the subsequent paper \cite{wilson2015envelope}.)

It turns out that $\Env(\Gamma) \subset L(\Omega_\Sigma,\mathbb{R})$, and in fact $\Env(\Gamma)$ consists exactly of the sets of reals that are ordinal-definable from $\Sigma$ in the model $L(\Omega_\Sigma,\mathbb{R})$, but we will not be able to see this until later.
For now we must use the following ``local'' definition of the envelope in terms of the ambiguous pointclass $\Delta_\Gamma = \Gamma \cap \check{\Gamma}$ and in terms of the notion of ``$\Delta_\Gamma$ in an ordinal parameter.''
This notion can be defined in general, but here we can take the following characterization as a definition: a set of reals is \emph{$\Delta_\Gamma$ in an ordinal parameter} if and only if $\Delta_1$-definable over $\rm{sLp}^{^\gTheta\Omega}(\mathbb{R},\rm{Code}(\Omega))|\alpha$ from ordinals (and $\Omega$, or equivalently $\Sigma$.)

%
%

\begin{definition}
The envelope of $\Gamma$, denoted by $\Env(\Gamma)$, is the pointclass consisting of all pointsets $A$ such that, for every countable $\sigma \subset \mathbb{R}$, there is a pointset $A'$ that is $\Delta_\Gamma$ in an ordinal parameter and satisfies $A \cap \sigma = A' \cap \sigma$.

The boldface pointclass $\bfEnv(\Gamma)$ is defined similarly but allowing a real parameter. That is, $A \in \bfEnv(\Gamma)$ if there is a real $x$ such that for every countable $\sigma \subset \mathbb{R}$ there is a pointset $A'$ that is $\Delta_\Gamma(x)$ in an ordinal parameter and satisfies $A \cap \sigma = A' \cap \sigma$.
\end{definition}

The following fact about envelopes is crucial for our argument.
It is essentially proved in the thesis
\cite{wilson2012contributions} (which deals with generic large cardinal properties of $\omega_1$ in $\mathsf{ZFC}$ rather than with large cardinal properties of $\omega_1$, but the argument carries over to the present context.)
An easier version with ``scale'' replaced by ``semiscale'' is proved in the paper \cite{wilson2015envelope}, and a special case of the scale construction appears in another paper \cite{wilson2015scales}.

\begin{lemma}[Wilson]\label{lemma:scale-from-strong-compactness}
 Assume $\mathsf{ZF} + \mathsf{DC}$. Let $\Gamma$ be an inductive-like pointclass with the scale property.
 Suppose that $\omega_1$ is $\Env(\Gamma)$-strongly compact.
 Then there is a scale on a  universal $\check{\Gamma}$ set, each of whose prewellorderings is in $\Env(\Gamma)$.
\end{lemma}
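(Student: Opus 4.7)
The plan is to construct a scale on a universal $\check\Gamma$ set $U$ by a variant of Moschovakis's second periodicity construction, replacing the use of determinacy of auxiliary games by the strong compactness hypothesis on $\omega_1$. Since $\Gamma$ has the scale property, I would first fix a $\Gamma$-scale $\vec\psi$ on a universal $\Gamma$ set $P$, write $U = \mathbb{R}\setminus P$, and exploit the inductive-like structure to express $U$ in a form involving a universal quantifier over reals or over ``potential witnesses.'' This gives, for each $x\in U$, a natural ``pointwise supremum'' of the $\psi_n$-norms against all potential real witnesses; the difficulty is that this supremum is too large, and the resulting relation is not obviously in $\Env(\Gamma)$.

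To get around this I would localize: for each countable $\sigma\subseteq\mathbb R$, replace the quantifier over all reals by the quantifier over reals in $\sigma$, producing a norm $\phi^\sigma_n$ whose associated prewellordering restricted to $\sigma$ is computable from sets that are $\Delta_\Gamma$ in an ordinal parameter (this is exactly where the envelope-representatives of $U$ and of the $\psi_n$'s, whose existence on $\sigma$ is guaranteed by $U\in\Env(\Gamma)$ after a suitable coding, are used). Then, taking a fine, countably complete measure $\mu$ on $\powerset_{\omega_1}(\Env(\Gamma))$ and pushing it forward to a fine, countably complete measure $\bar\mu$ on $\powerset_{\omega_1}(\mathbb R)$ along an appropriate map, I would define the global norm as an ultrapower-style average
\[
 \phi_n(x) \;=\; [\sigma\mapsto\phi^\sigma_n(x)]_{\bar\mu},
\]
computed inside a wellordered inner model that knows about $\Env(\Gamma)$ and $\mu$, so that \Los's theorem applies to ordinal-valued functions.

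The key verifications are then: (i) that $\phi_n$ is a norm whose associated prewellordering lies in $\Env(\Gamma)$; and (ii) lower semicontinuity, giving the scale property. For (i), the prewellordering $\{(x,y):\phi_n(x)\le\phi_n(y)\}$, restricted to any countable $\tau\subseteq\mathbb R$, is by fineness and the definition of $\bar\mu$ determined by the local prewellorderings $\phi^\sigma_n\restriction\tau$ for $\bar\mu$-almost every $\sigma\supseteq\tau$; these locals are $\Delta_\Gamma$ in an ordinal parameter by construction, so the global relation is locally approximable by such sets on every countable $\tau$, i.e.\ it lies in $\Env(\Gamma)$. For (ii), given a sequence $x_i\to x$ with each $x_i\in U$ and $\phi_n(x_i)$ eventually constant, countable completeness of $\bar\mu$ lets me pass to a single $\sigma$ containing $\{x_i:i<\omega\}\cup\{x\}$ on which all of the countably many required local witnessings hold simultaneously; the classical lower-semicontinuity argument for $\phi^\sigma_n$ then transfers to the global $\phi_n$.

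The main obstacle will be making the local norms $\phi^\sigma_n$ cohere under the measure. The definition of $\Env(\Gamma)$ only guarantees, for each countable $\sigma$, the existence of some $\Delta_\Gamma$-in-an-ordinal approximation to $U$ (and to each $\psi_n$) on $\sigma$, with no uniform choice of such approximations in $\sigma$. Arranging these approximations coherently enough that (a) the ultrapower of ordinals by $\bar\mu$ is wellfounded (here $\mathsf{DC}$ together with countable completeness is needed), (b) \Los's theorem can be applied in a wellordered inner model containing $\mu$ and sufficient envelope data, and (c) the resulting prewellordering is itself envelope-approximable on every countable $\tau$, is the combinatorial heart of the argument and is essentially what is carried out in \cite{wilson2012contributions} and \cite{wilson2015envelope,wilson2015scales}.
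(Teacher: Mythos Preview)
The paper does not give its own proof of this lemma; it states the result and refers to Wilson's thesis \cite{wilson2012contributions} (with the semiscale version in \cite{wilson2015envelope} and a special case in \cite{wilson2015scales}). So there is no in-paper argument to compare against directly, only the cited sources.

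Your sketch is along the right lines and matches the approach in those references: localize to countable $\sigma\subseteq\mathbb{R}$, produce for each such $\sigma$ norms whose prewellorderings are $\Delta_\Gamma$-in-an-ordinal on $\sigma$, integrate via the fine countably complete measure, and use countable completeness for lower semicontinuity. The verification that the resulting global prewellorderings lie in $\Env(\Gamma)$ is exactly as you say: restrict to an arbitrary countable $\tau$, use fineness to find $\sigma\supseteq\tau$ in the measure, and read off a $\Delta_\Gamma$-in-an-ordinal approximation.

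One small correction of emphasis: calling this ``a variant of Moschovakis's second periodicity'' is a bit misleading. Second periodicity produces a scale on $\forall^{\mathbb{R}}P$ from a scale on $P$ via game comparison, and the determinacy of those comparison games is what one would have to replace. The construction in Wilson's work is not really a second-periodicity argument in that sense; it is closer to Martin's analysis of the envelope (the ``next scaled pointclass'' construction under $\mathsf{AD}$), adapted to the partial-determinacy setting by using the measure in place of the Martin measure / full determinacy. The local norms are built directly from the $\Gamma$-scale on the complement together with the envelope approximations, not from winning strategies in comparison games. This does not affect the correctness of your outline, but it may affect how you would actually write down the definition of $\phi_n^\sigma$.
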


Another important fact about envelopes is that if $\mathsf{ZF} + \mathsf{DC}_\mathbb{R}$ holds and the boldface ambiguous part $\bfDelta_\Gamma$ of the pointclass $\Gamma$ is determined, as it is here, then $\bfEnv(\Gamma)$ is determined and projectively closed (Wilson \cite{wilson2012contributions, wilson2015envelope}; based on work of Kechris, Woodin, and Martin.)
Therefore Wadge's lemma applies to it, as one can easily verify that the relevant games are determined.
 Moreover, the Wadge preordering\footnote{We are abusing notation here; really it is a preordering of pairs $\{B,\neg B\}$ where $B \in \bfEnv(\Gamma)$.} of $\bfEnv(\Gamma)$ is a prewellordering:
 otherwise by $\mathsf{DC}_\mathbb{R}$ we could choose a sequence of pointsets in $\bfEnv(\Gamma)$ that was strictly decreasing in the Wadge ordering, but then by the proof of the Martin--Monk theorem we get a contradiction.
 (Again one can easily verify that the relevant games are determined.)

Note that the prewellorderings of a scale as in Lemma \ref{lemma:scale-from-strong-compactness} must be Wadge-cofinal in $\bfEnv(\Gamma)$; otherwise the sequence of prewellorderings itself would be coded by a set of reals in $\bfEnv(\Gamma)$, which is impossible as mentioned above.
From such a scale, it then follows by a general argument (see Jackson \cite{Jackson} and the straightforward adaptation \cite[Section 4.3]{wilson2012contributions} to the partial determinacy context) that we can obtain a self-justifying system contained in $\bfEnv(\Gamma)$.\footnote{We don't know if it is possible to obtain a self-justifying system contained in the lightface envelope, but this will not matter for our application.}

\begin{lemma}\label{lemma:sjs-from-strong-compactness}
 Assume $\mathsf{ZF} + \mathsf{DC}$. Let $\Gamma$ be an inductive-like pointclass with the scale property such that $\bfDelta_\Gamma$ is determined.
 Suppose that $\omega_1$ is $\Env(\Gamma)$-strongly compact.
 Then there is a self-justifying system $\mathcal{A} \subset \bfEnv(\Gamma)$ containing a universal $\Gamma$ set.
\end{lemma}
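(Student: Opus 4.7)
The plan is to follow the template indicated by the paragraph immediately preceding the statement: use Lemma \ref{lemma:scale-from-strong-compactness} to obtain a scale with norms in $\Env(\Gamma)$, upgrade this to Wadge-cofinality in $\bfEnv(\Gamma)$, and then apply a Jackson-style closure argument to construct the self-justifying system.

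First, I would apply Lemma \ref{lemma:scale-from-strong-compactness} to the pointclass $\Gamma$, noting that $\omega_1$ is in particular $\Env(\Gamma)$-strongly compact by hypothesis. This produces a scale $\vec{\varphi} = (\varphi_n : n < \omega)$ on a universal $\check{\Gamma}$ set $U$, each of whose prewellorderings $\le_{\varphi_n}$ lies in $\Env(\Gamma) \subset \bfEnv(\Gamma)$.

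Second, I would verify that the collection $\{\le_{\varphi_n} : n < \omega\}$ is Wadge-cofinal in $\bfEnv(\Gamma)$. Suppose for contradiction that there is $B \in \bfEnv(\Gamma)$ Wadge-strictly-above every $\le_{\varphi_n}$. Since $\bfEnv(\Gamma)$ is determined and projectively closed (as noted in the excerpt), and Wadge comparability holds within $\bfEnv(\Gamma)$, the sequence of norms would then be coded by a single pointset in $\bfEnv(\Gamma)$. This would yield a $\bfEnv(\Gamma)$ uniformization of the universal $\check{\Gamma}$ set $U$, contradicting the standard Moschovakis-style non-uniformization argument for the pointclass $\check{\Gamma}$ (the same argument invoked in the excerpt to show that the prewellorderings cannot belong to $\mathrm{sLp}^{^\gTheta\Omega}(\mathbb{R},\mathrm{Code}(\Omega))$).

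Third, with Wadge-cofinality in hand, I would build the sjs by the standard Jackson construction (as adapted in \cite[Section~4.3]{wilson2012contributions}). Begin with $\mathcal{A}_0 = \{U, \neg U\}$, together with the pointsets coding $\vec{\varphi}$ (which are all in $\bfEnv(\Gamma)$). At each subsequent stage $n$, for every $A$ already in $\mathcal{A}_n$, adjoin (i) its complement, and (ii) a scale on $A$ with norms in $\bfEnv(\Gamma)$. The scale in (ii) is not produced afresh from strong compactness; rather, since $A \in \bfEnv(\Gamma)$ has Wadge rank strictly below some $\varphi_n$ by cofinality, Jackson's scale-propagation/third-periodicity argument, applied inside the determined pointclass $\bfEnv(\Gamma)$, transfers norms from $\vec{\varphi}$ to norms on $A$ that again lie in $\bfEnv(\Gamma)$. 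After $\omega$ stages the union $\mathcal{A} = \bigcup_n \mathcal{A}_n$ is a countable sjs contained in $\bfEnv(\Gamma)$ and containing the universal $\Gamma$ set $\neg U$ (equivalently, a universal $\Gamma$ set).

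The main obstacle is step three: one needs the full third-periodicity-style scale-propagation inside the envelope, and in particular the closure of $\bfEnv(\Gamma)$ under the game quantifiers that feature in that argument. This requires both the hypothesis that $\bfDelta_\Gamma$ is determined and the projective closure and Wadge-prewellordering properties of $\bfEnv(\Gamma)$ recalled in the excerpt. Fortunately, the verification has been carried out in \cite{wilson2012contributions}, and the adaptation to the partial-determinacy setting at hand is straightforward given those tools.
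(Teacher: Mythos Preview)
Your proposal is correct and follows essentially the same approach as the paper: apply Lemma~\ref{lemma:scale-from-strong-compactness}, argue that the resulting prewellorderings are Wadge-cofinal in $\bfEnv(\Gamma)$ (else the scale itself would lie in $\bfEnv(\Gamma)$, which is impossible), and then invoke the Jackson-style construction as adapted in \cite[Section~4.3]{wilson2012contributions}. The paper's proof is precisely the paragraph preceding the lemma, and you have simply unpacked it in greater detail.
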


We will use this lemma together with the hypotheses of Theorems \ref{theorem:ADR-equiconsistency} or \ref{theorem:ADR-DC-equiconsistency}, to obtain 
a self-justifying system $\mathcal{A} \subset \bfEnv(\Gamma)$ containing a universal $\Gamma$ set.
We begin with the observation that the length of the Wadge prewellordering of $\bfEnv(\Gamma)$ is at most $\Theta$ by the usual argument: the initial segment corresponding to a set $B \in \bfEnv(\Gamma)$ is the image of $\mathbb{R}$
 under the function $y \mapsto g_y^{-1}[B]$, where $g_y$ denotes the continuous function coded by the real $y$.
 Moreover, the lightface envelope $\Env(\Gamma)$ admits a wellordering (essentially an ultrapower of the canonical wellordering of the $\Delta_\Gamma$-in-an-ordinal sets by Martin's cone measure, which measures the relevant sets by $\Env(\Gamma)$-determinacy.)

\begin{lemma}
 Let $\Gamma$ be an inductive-like pointclass with the scale property such that $\bfDelta_\Gamma$ is determined.
 Assume $\mathsf{ZF} + \mathsf{DC} + {}$``$\omega_1$ is $\Theta$-strongly compact.'' Then there is a self-justifying system $\mathcal{A} \subset \bfEnv(\Gamma)$ containing a universal $\Gamma$ set.
\end{lemma}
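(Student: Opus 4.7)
The plan is to reduce the statement to Lemma~\ref{lemma:sjs-from-strong-compactness}: that lemma already extracts the desired self-justifying system $\mathcal{A} \subset \bfEnv(\Gamma)$ (containing a universal $\Gamma$-set) from the hypothesis that $\omega_1$ is $\Env(\Gamma)$-strongly compact. Thus the real content of the lemma is the implication ``$\omega_1$ is $\Theta$-strongly compact $\Longrightarrow$ $\omega_1$ is $\Env(\Gamma)$-strongly compact,'' and by the standard surjection principle for strong compactness this in turn reduces to exhibiting a surjection from $\Theta$ onto $\Env(\Gamma)$.

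To construct such a surjection I would invoke the two structural observations made in the paragraph preceding the lemma. First, the Wadge prewellordering of $\bfEnv(\Gamma)$ has length at most $\Theta$, via the map $y \mapsto g_y^{-1}[B]$. Second, $\Env(\Gamma)$ is wellorderable, with the wellordering given essentially as the ultrapower by Martin's cone measure of the canonical enumeration of the $\Delta_\Gamma$-in-an-ordinal pointsets. That canonical enumeration is indexed by pairs $(\varphi,\xi)$ where $\varphi$ is a formula in the language of $\rm{sLp}^{^\gTheta\Omega}(\mathbb{R},\rm{Code}(\Omega))|\alpha$ and $\xi < \alpha$ is an ordinal parameter; since $\alpha = ({\bf \delta}^2_1)^{\rm{sLp}^{^\gTheta\Omega}(\mathbb{R},\rm{Code}(\Omega))}$ opens the last gap of that model, $\alpha \le \Theta$, so the canonical enumeration already has length at most $\Theta$.

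The main technical step is to verify that the Martin-measure ultrapower preserves the length bound $\Theta$. The approach is as follows: each $A \in \Env(\Gamma)$ is represented by a Turing-invariant function $d \mapsto A_d$ where $A_d$ is $\Delta_\Gamma$-in-an-ordinal parameter $< \alpha$ and $A_d$ agrees with $A$ on the reals recursive in $d$. One then bounds the $\mu$-rank of each such representing function below $\Theta$, using countable completeness of Martin's measure together with suitable closure properties of $\Theta$ under $\mathbb{R}$-indexed sups of ordinals below $\Theta$. This yields a wellordering of $\Env(\Gamma)$ of length at most $\Theta$, and hence a surjection $\Theta \twoheadrightarrow \Env(\Gamma)$. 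Pushing forward a $\Theta$-strong compactness measure for $\omega_1$ along this surjection gives $\Env(\Gamma)$-strong compactness of $\omega_1$, and Lemma~\ref{lemma:sjs-from-strong-compactness} then supplies the desired self-justifying system $\mathcal{A}$. The hardest part is the ultrapower bound, since Martin-style ultrapowers do in general shift ordinals, and one must check that no blow-up past $\Theta$ occurs for the representing functions of envelope sets.
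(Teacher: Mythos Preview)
Your overall strategy is correct and matches the paper: reduce to Lemma~\ref{lemma:sjs-from-strong-compactness} by producing a surjection $\Theta \twoheadrightarrow \Env(\Gamma)$ and pushing forward the strong compactness measure. The gap is in how you obtain that surjection.

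You propose to bound the length of the Martin-measure wellordering of $\Env(\Gamma)$ by $\Theta$ directly, and you correctly identify this as the hard step. But you do not actually carry it out, and the justification you sketch (``closure properties of $\Theta$ under $\mathbb{R}$-indexed sups'') is not available from the hypotheses: $\Theta$ can be singular, and an ultrapower by Martin's measure can certainly move ordinals past $\alpha$. The paper in fact explicitly remarks, in parentheses, that it is \emph{not clear} whether the Martin-measure wellordering has length at most $\Theta$. So you are attempting precisely the step the authors chose to avoid.

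The paper's fix is a one-line trick you are missing: take the lexicographic product of the Wadge prewellordering (restricted to $\Env(\Gamma)$) with \emph{any} wellordering of $\Env(\Gamma)$. In this refined wellordering, the initial segment below any $A \in \Env(\Gamma)$ is contained in the Wadge-initial segment $\{B \in \Env(\Gamma) : B \le_{\mathrm{W}} A\}$, which is a surjective image of $\mathbb{R}$ and hence (being wellorderable) has order type strictly less than $\Theta$. So the refined wellordering has length at most $\Theta$, and you get your surjection without ever needing to control the Martin-measure ultrapower.
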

\begin{proof}
 Consider the restriction of the Wadge prewellordering of $\bfEnv(\Gamma)$ to the lightface envelope $\Env(\Gamma)$.
 We can refine this prewellordering to a wellordering
 by taking its lexicographical product with a wellordering of $\Env(\Gamma)$, which exists, as mentioned above.
 This refinement has the property that
 its length is at most $\Theta$, because its initial segment below any set $A \in \Env(\Gamma)$ is contained in the Wadge-initial segment $\{B \in \Env(\Gamma) : B \le_{\text{W}} A\}$.
 (It's not clear whether the original wellordering of $\Env(\Gamma)$ described above has this property.)
 Therefore our hypothesis implies that $\omega_1$ is $\Env(\Gamma)$-strongly compact, and the desired conclusion follows by Lemma \ref{lemma:sjs-from-strong-compactness}.
\end{proof}

\begin{lemma}
 Let $\Gamma$ be an inductive-like pointclass with the scale property such that $\bfDelta_\Gamma$ is determined.
 Assume $\mathsf{ZF} + \mathsf{DC}_\mathbb{R} + {}$``$\omega_1$ is $\mathbb{R}$-strongly compact and $\Theta$ is singular.'' Then there is a self-justifying system $\mathcal{A} \subset \bfEnv(\Gamma)$ containing a universal $\Gamma$ set.
\end{lemma}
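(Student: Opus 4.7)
The plan is to reduce to Lemma \ref{lemma:sjs-from-strong-compactness}, which requires $\mathsf{DC}$ together with $\Env(\Gamma)$-strong compactness of $\omega_1$. I will first pass to an inner model in which $\mathsf{DC}$ holds and $\omega_1$ remains $\mathbb{R}$-strongly compact; then use the singularity of $\Theta$ to produce a surjection $\mathbb{R}\to\Env(\Gamma)$ in that model, and hence an $\Env(\Gamma)$-strong compactness measure by pushforward; and finally transfer the resulting self-justifying system back to $V$.

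For the inner model step, let $\mu$ be a fine, countably complete measure on $\powerset_{\omega_1}(\mathbb{R})$, let $\lambda=\cof(\Theta)<\Theta$, and let $\vec{\theta}=\langle\theta_\xi:\xi<\lambda\rangle$ be cofinal in $\Theta$. I will form the inner model $M=L(\Omega_\Sigma,\mathbb{R})[\mu,\vec{\theta},\Sigma,\Omega]$ by constructing relative to these as predicates. In $M$ every set is a surjective image of $\mathrm{Ord}\times\mathbb{R}$, so a standard argument shows that $\mathsf{DC}_\mathbb{R}$ upgrades to full $\mathsf{DC}$ in $M$, while $\mu\cap M$ witnesses $\mathbb{R}$-strong compactness of $\omega_1$ in $M$. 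Because $M$ contains $\Sigma$, $\Omega$, $\Code(\Omega)$, and enough of the $\mathrm{sLp}^{^\gTheta\Omega}(\mathbb{R},\Code(\Omega))$-hierarchy, the pointclasses $\Gamma$ and $\Env(\Gamma)$ are computed the same way in $M$ as in $V$.

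Working in $M$, I will refine the Wadge prewellordering of $\Env(\Gamma)$ by the canonical $\OD$-in-$\Sigma$ wellordering of $L(\Omega_\Sigma,\mathbb{R})$, exactly as in the previous lemma, to obtain a wellordering $\preceq$ of $\Env(\Gamma)$ of length at most $\Theta$. Using $\vec{\theta}$, I pick a $\preceq$-cofinal sequence $\langle B_\xi:\xi<\lambda\rangle$. For each $\xi$ the $\preceq$-initial segment below $B_\xi$ has length strictly less than $\Theta$, hence is a surjective image of $\mathbb{R}$ uniformly in $\xi$ via $\preceq$. Combining via a surjection $\mathbb{R}\to\lambda$ (which exists since $\lambda<\Theta$) yields a surjection $\mathbb{R}\to\Env(\Gamma)$, and pushing forward $\mu\cap M$ along it shows that $\omega_1$ is $\Env(\Gamma)$-strongly compact in $M$. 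At that point Lemma \ref{lemma:sjs-from-strong-compactness} applies in $M$ and gives a self-justifying system $\mathcal{A}\subset\bfEnv(\Gamma)^M$ containing a universal $\Gamma$-set.

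To finish I observe that $\mathcal{A}$ remains an sjs in $V$: the sjs property depends only on countable families of scales and is absolute, while $\bfEnv(\Gamma)^M\subseteq\bfEnv(\Gamma)^V$ follows from the local (countable-$\sigma$) definition of the envelope, since any $M$-witnessing $\Delta_\Gamma$-in-ordinal approximant is \emph{a fortiori} a $V$-witness. The main obstacle will be the absoluteness setup for the inner model: ensuring that $M$ computes $\Env(\Gamma)$ correctly requires that $M$ contain enough of the hybrid-mouse hierarchy defining $\Gamma$, and this is where one uses that $\Omega$ is self-iterable and determines itself on generic extensions, so that including $\Omega$ and $\Sigma$ as predicates in the construction of $M$ suffices.
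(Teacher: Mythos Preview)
Your overall strategy---get a surjection $\mathbb{R}\to\Env(\Gamma)$ from singularity of $\Theta$, push forward the measure, and invoke Lemma~\ref{lemma:sjs-from-strong-compactness} inside an inner model where $\mathsf{DC}$ holds---is the same as the paper's. But there are two genuine problems with your execution.

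First, your wellordering of $\Env(\Gamma)$. You invoke ``the canonical $\OD$-in-$\Sigma$ wellordering of $L(\Omega_\Sigma,\mathbb{R})$,'' but this presupposes $\Env(\Gamma)\subset L(\Omega_\Sigma,\mathbb{R})$, and the paper explicitly says this is \emph{not} known at this stage (indeed the lemma is stated for an arbitrary inductive-like scaled $\Gamma$, with no $\Sigma$ or $\Omega_\Sigma$ in sight). The wellordering actually used---in both the previous lemma and this one---is intrinsic to $\Gamma$: it is essentially the ultrapower of the canonical wellorder of the $\Delta_\Gamma$-in-an-ordinal sets by Martin's cone measure. Your description ``exactly as in the previous lemma'' is therefore wrong on this point.

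Second, and more structurally, you reverse the order of operations: you pass to the inner model \emph{first} and then try to build the surjection there. This forces you to verify that $M$ computes $\Gamma$ and $\Env(\Gamma)$ correctly, which you flag as ``the main obstacle'' and do not resolve. The paper avoids this entirely by building the surjection $\mathbb{R}\to\bfEnv(\Gamma)$ in $V$ first (this requires no choice beyond $\mathsf{DC}_\mathbb{R}$: one assigns to each real $x$ the $<$-least $B_x\in\Env(\Gamma)$ of Wadge rank $\geq\beta_x$, then $(x,y)\mapsto g_y^{-1}[B_x]$ surjects $\mathbb{R}\times\mathbb{R}$ onto $\bfEnv(\Gamma)$). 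Only \emph{then} does one pass to an inner model, and the model is chosen to make absoluteness trivial: take $X=\Env(\Gamma)^\omega\cup\mathbb{R}$ and work in $L(X)[\mu]$. This model contains all of $\Env(\Gamma)$ and all countable sequences from it by fiat, $\mathsf{DC}_X$ upgrades to $\mathsf{DC}$, the pushed-forward measure on $\powerset_{\omega_1}(\Env(\Gamma))$ restricts to a fine countably complete measure there, and the resulting sjs (a countable sequence from $\Env(\Gamma)$) lies in $X\subset V$ with its sjs property upward absolute. No need to identify $\Env(\Gamma)^M$ or to carry $\Omega_\Sigma$, $\Sigma$, $\vec\theta$ into the model.
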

\begin{proof}
 Let $\lambda$ denote the length of the Wadge prewellordering of $\bfEnv(\Gamma)$ and fix a cofinal function $\mathbb{R} \to \lambda$, say $x \mapsto \beta_x$. (The case $\lambda = \Theta$ is where we use the assumption that $\Theta$ is singular, although it turns out that this case cannot occur when $\Theta$ is singular.)
 Note that the lightface pointclass $\Env(\Gamma)$ is Wadge-cofinal in the boldface pointclass $\bfEnv(\Gamma)$ because every subset of $\mathbb{R}$ in $\bfEnv(\Gamma)$ is a section of some subset of $\mathbb{R} \times \mathbb{R}$ in $\Env(\Gamma)$.

 Fix a wellordering $<$ of the lightface envelope $\Env(\Gamma)$.
 Then to each real $x$ we can assign the $<$-least set $B_x \in \Env(\Gamma)$ whose rank in the Wadge prewellordering of $\bfEnv(\Gamma)$ is at least $\beta_x$.
 Then the family of sets $\{B_x : x \in \mathbb{R}\}$ is cofinal in the Wadge prewellordering of $\bfEnv(\Gamma)$, and can we obtain a surjection from $\mathbb{R} \times \mathbb{R}$ onto $\bfEnv(\Gamma)$ by
 $(x,y) \mapsto g_y^{-1}[B_x]$, where $g_y$ is the continuous function coded by the real $y$.

 Therefore there is a surjection from $\mathbb{R}$ onto $\bfEnv(\Gamma)$,
 and by our hypothesis that $\omega_1$ is $\mathbb{R}$-strongly compact, it follows that $\omega_1$ is $\bfEnv(\Gamma)$-strongly compact.  In particular it is $\Env(\Gamma)$-strongly compact, which is all we need.
 We could now apply Lemma \ref{lemma:sjs-from-strong-compactness} to obtain the desired conclusion, except for the problem that we only have $\mathsf{DC}_\mathbb{R}$ in place of $\mathsf{DC}$.  This problem can be solved by passing to an inner model.

 Take a fine, countably complete measure $\mu$ on $\powerset_{\omega_1}(\Env(\Gamma))$ and consider the
 model $L(X)[\mu]$
 where $X = \Env(\Gamma)^\omega \cup \mathbb{R}$.
 In $V$ we have $\mathsf{DC}_\mathbb{R}$ and we have a surjection from $\mathbb{R}$ to  $X$, so we have $\mathsf{DC}_X$.
 Because an $\omega$-sequence of elements of $X$ can be coded by a single element of $X$, we have $\mathsf{DC}_X$ in  $L(X)[\mu]$ as well.
 In $L(X)[\mu]$ every set is a surjective image of $X \times \xi$ for some ordinal $\xi$, so $\mathsf{DC}$ follows from $\mathsf{DC}_X$ by a standard argument.
 Then we can apply Lemma \ref{lemma:sjs-from-strong-compactness} in $L(X)[\mu]$ and note that the conclusion is upward absolute to $V$.
\end{proof}

Now that we have obtained a self-justifying system $\mathcal{A} = (A_i : i<\omega)$ sealing the envelope of $\Gamma$,
we may proceed as in the ``gap in scales'' case of
Section \ref{CMIInsideLp}
to get a pair $(\mathcal{Q}, \Lambda)$ such that $\mathcal{Q}$ is an
$\Gamma$-suitable ($\g$-organized) $\Omega$-premouse and $\Lambda$ is the $(\omega_1,\omega_1)$-iteration strategy for $\mathcal{Q}$ guided by $\mathcal{A}$.
A slight difference from Section \ref{CMIInsideLp} is caused by the fact that, at this stage in the argument, we do not know how to rule out the possibility that the pointclass $\bfEnv(\Gamma)$ properly contains the pointclass $\Omega_\Sigma = \powerset(\mathbb{R})\cap \rm{sLp}^{^\gTheta\Omega}(\mathbb{R},\rm{Code}(\Omega))$.

However, this difference does not create any problem because
the important thing is that every set $A \in \bfEnv(\Gamma)$
(and in particular every set $A_i$ in our self-justifying system
$\mathcal{A}$) has the property that,
for a cone of $b \in \HC$,
the hybrid lower part mouse $\rm{Lp}^{^\g\Omega,\Gamma}(b)$
has a $\Coll(\omega,b)$-term for a set of reals that locally captures $A$.
(If $A$ is in the lightface envelope then the base of the cone is $\emptyset$ and this holds for all $b \in \HC$.)
For a proof, see Wilson \cite[Section 4.2]{wilson2012contributions}.
This local term-capturing property is sufficient to make sense of the notion of $A$-iterability, to prove the existence of $A$-iterable premice, and to get an iteration strategy $\Lambda$ guided by the self-justifying system $\mathcal{A}$.

Defining the $\Sigma$-CMI operator $\mathcal{F} = \Lambda$,
we can then use Lemma \ref{lemma-not-square-implies-M1Fsharp} to construct a sequence of $\Sigma$-CMI operators $(J_n : n<\omega)$, where $J_0 = \mathcal{F}$ and $J_{n+1} = \M_1^{\sharp, J_n}$ for all $n>0$.
Because $\mathcal{A}$ and $\mathcal{F}$ are projectively equivalent (in any reasonable coding)
this shows the existence of a determined projective-like hierarchy just beyond $\bfEnv(\Gamma)$, and therefore beyond the maximal model of $\mathsf{AD}^+ + \Theta = \theta_\Sigma$.

To continue further and get a model of $\mathsf{ZF} + \mathsf{AD}^+ + \Theta > \theta_\Sigma$, we proceed along the lines of
Section \ref{CMIInsideLp}.
The difference is that now the operator $\mathcal{F}$ is here to stay: we must
consider $\mathcal{F}$-hybrid mice from this point on,
and never return to considering $\Omega$-hybrid mice
because they cannot give us anything new.

Our model of $\mathsf{AD}^+ + \Theta > \theta_\Sigma$
will be obtained as the maximal model of 
$\mathsf{AD}^+ + \Theta = \theta_\Lambda$ (and $\theta_\Sigma$ will be the penultimate member of its Solovay sequence.)
The existence of this maximal model is established by the results of
Section \ref{CMIInsideLp}
with the suitable pair $(\mathcal{Q},\Lambda)$ and its associated operator $\mathcal{F}$ in place of the hod pair (or suitable pair, or empty pair) $(\mathcal{P},\Sigma)$ and its associated operator $\Omega$.
(For this reason it is important that we allowed suitable pairs as well as hod pairs and empty pairs in Sections \ref{OneStep} and \ref{CMIInsideLp}.)

To obtain the maximal model of $\mathsf{AD}^+ + \Theta = \theta_\Lambda$,
it remains only to show that $\Lambda$ can be extended to a $\Theta+1$-iteration strategy with branch condensation. (In fact, we will show that it can be extended to a $\Theta^+$-iteration strategy with branch condensation.) As remarked in Section \ref{CMIInsideLp}, this strategy extension is necessary to define the model $\rm{sLp}^{^\gTheta\F}(\mathbb{R},\rm{Code}(\F))$ via $\g$-organization, which in turn is necessary to analyze the pattern of scales in this model.

Note that because the iteration strategy $\Lambda$ is guided by a self-justifying system, it has branch condensation and hull condensation
and the set of reals coding it is Suslin.
Accordingly, we can use the following lemma to extend $\Lambda$.
Our argument is based on Schindler and Steel \cite[Lemmas 2.1.11 and 2.1.12]{CMI},
but some adaptations are necessary in the absence of $\mathsf{AC}$.
A similar argument is also found in Steel \cite{PFA}.

Before proving the lemma (which will take the remainder of this section)
let us note that the hypothesis that every uncountable regular cardinal $\le \Theta$ is threadable
follows from the hypotheses of Theorems \ref{theorem:ADR-equiconsistency} and \ref{theorem:ADR-DC-equiconsistency}.
(In particular, it follows from the hypothesis $\mathsf{ZF} + \mathsf{DC} + {}$``$\omega_1$ is $\Theta$-strongly compact'' and also from the hypothesis $\mathsf{ZF} + \mathsf{DC}_{\mathbb{R}} + {}$``$\omega_1$ is $\mathbb{R}$-strongly compact and $\Theta$ is singular.'')
Note also that the conclusion that the extension of $\Lambda$ has hull condensation, together with the fact that the original $\omega_1$-iteration strategy $\Lambda$ has branch condensation, implies that the
extension strategy also has branch condensation by an easy Skolem hull argument. (We can take the Skolem hull in an inner model of $\mathsf{ZFC}$, so that no choice is required.)

\begin{lemma}\label{lemma-extend-hull-condensation}
 Assume that $\mathsf{ZF}$ holds and let $\Lambda$ be an $\omega_1$-iteration strategy with hull condensation for a premouse\footnote{By a premouse here we mean an $\mathcal{F}$-premouse where $\mathcal{F}$ is an operator that condenses finely (such as the core model induction operators that we consider in this paper.)  Alternatively we could use coarse mice here, because we will only need the extended strategy for genericity iterations.} $\mathcal{Q}$.  Assume that $\operatorname{Code}(\Lambda)$ is Suslin.  Let $\eta$ be an uncountable cardinal and assume that every uncountable regular cardinal $\le \eta$ is threadable.
 Then $\Lambda$ has a (necessarily unique) extension to an $\eta^+$-iteration strategy with hull condensation.
\end{lemma}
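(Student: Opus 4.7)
The plan is to define the extended strategy $\Lambda^*$ by induction on the length $\lambda < \eta^+$ of the iteration tree, using hull condensation of $\Lambda$ together with the Suslin representation of $\operatorname{Code}(\Lambda)$ to force the branch choice to be canonical. Fix a tree $U$ on $\omega \times \kappa$ projecting to $\operatorname{Code}(\Lambda)$, so that the relation ``$b = \Lambda(\bar{\mathcal{T}})$'' on countable trees $\bar{\mathcal{T}}$ is absolute to any transitive model of a sufficient fragment of set theory containing $U$. Because we are working only in $\mathsf{ZF}$, hull arguments are carried out inside an inner model $W$ of $\mathsf{ZFC}$ obtained by relativizing $L$ to $U$, $\mathcal{T}$, and $\Lambda \cap \mathsf{HC}$; the conclusions are pulled back to $V$ via absoluteness provided by $U$.

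Given $\mathcal{T}$ of limit length $\lambda \in [\omega_1, \eta^+)$, the aim is to produce a canonical sequence $\vec{C} = (C_\alpha : \alpha \in \operatorname{lim}(\lambda))$ from which a cofinal branch can be threaded. For each limit $\alpha < \lambda$, take (in $W$) a continuous $\subseteq$-increasing chain of countable elementary substructures $\sigma \prec H(\theta)^W$ containing $U$ and $\mathcal{T} \restriction \alpha$, cofinal in $\alpha$; each $\sigma$ has a transitive collapse yielding a countable hull $\bar{\mathcal{T}}_\sigma$ of $\mathcal{T} \restriction \alpha$ via an uncollapse map $\pi_\sigma$. Apply $\Lambda$ to get $\bar{b}_\sigma = \Lambda(\bar{\mathcal{T}}_\sigma)$ and set $C_\alpha = \bigcup_\sigma \pi_\sigma[\bar{b}_\sigma]$. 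Hull condensation of $\Lambda$ implies that each $\pi_\sigma[\bar{b}_\sigma]$ is an initial segment of a genuine branch of $\mathcal{T} \restriction \alpha$; Suslin absoluteness implies that $C_\alpha$ is independent of the choice of chain and is a club in $\alpha$; and iterating hull condensation yields coherence, namely $C_\alpha = C_\beta \cap \alpha$ whenever $\alpha \in \operatorname{lim}(C_\beta)$.

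If $\lambda$ is a regular uncountable cardinal (necessarily $\le \eta$), then $\vec{C}$ is a coherent sequence on $\lambda$ and by our threadability hypothesis it admits a thread $D$; let $b := \bigcup\{C_\alpha : \alpha \in \operatorname{lim}(D)\}$ and set $\Lambda^*(\mathcal{T}) = b$. If $\lambda$ is singular, fix a continuous cofinal sequence of length $\operatorname{cf}(\lambda)$ and reduce to the regular case at $\operatorname{cf}(\lambda)$, which is a regular cardinal strictly below $\lambda \le \eta$, hence threadable (or $=\omega$, in which case the branch is assembled directly from countably many of the $C_\alpha$). Uniqueness of $\Lambda^*$ and hull condensation for $\Lambda^*$ follow because the whole construction commutes with taking further hulls: any hull of $(\mathcal{T}, \Lambda^*(\mathcal{T}))$ reduces via an appropriate $\pi$ to one of the $(\bar{\mathcal{T}}_\sigma, \bar{b}_\sigma)$, on which $\Lambda$ was directly applied.

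The main obstacle is to justify coherence and canonicity of $\vec{C}$ in the absence of $\mathsf{DC}$: without choice one cannot freely select compatible systems of hulls, and without coherence one cannot invoke threadability. The Suslin representation of $\operatorname{Code}(\Lambda)$ is what breaks this impasse, since it makes the branch choice on countable hulls absolute and hence forces $C_\alpha$ to be definable from $\mathcal{T} \restriction \alpha$ and $U$ alone inside $W$, removing all need for choice in the construction of $\vec{C}$. The overall shape of the argument parallels Schindler--Steel's Lemmas 2.1.11--2.1.12 in \cite{CMI}, with the inner model $W$ and absoluteness via $U$ playing the role that $\mathsf{AC}$ plays in the $\mathsf{ZFC}$ proofs.
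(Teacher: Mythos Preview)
Your approach is on the right track in the uncountable-cofinality case: once the induction is set up, your $C_\alpha$ simply recovers the branch $[0,\alpha)_{\mathcal{T}}$ already present in $\mathcal{T}$, and this is exactly the coherent sequence the paper threads directly. But the real difficulty lies in the case $\cf(\lh(\mathcal{T})) = \omega$, and here your proposal has a genuine gap.

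You say that when $\cf(\lambda) = \omega$ the branch is ``assembled directly from countably many of the $C_\alpha$'', and that ``iterating hull condensation yields coherence'' of $\vec{C}$. Neither claim is justified. For countable hulls $X \subseteq Y$ both cofinal in $\lambda$, hull condensation gives $\Lambda(\mathcal{T}_X) = \sigma_{XY}^{-1}[\Lambda(\mathcal{T}_Y)]$ \emph{only if} the range of the factor map $\sigma_{XY}$ meets $\Lambda(\mathcal{T}_Y)$ cofinally --- and there is no a priori reason it should. Equivalently, the branches $[0,\alpha)_{\mathcal{T}}$ for $\alpha$ ranging over a cofinal $\omega$-sequence in $\lambda$ need not be nested, since distinct cofinal branches of an iteration tree are eventually disjoint; you cannot simply take their union.

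The paper handles this via the notion of a \emph{stable} hull $X$: one for which $\Lambda(\mathcal{T}_X)$ does pull back correctly from $\Lambda(\mathcal{T}_Y)$ for every larger appropriate $Y$. Existence of a stable $X$ is proved by contradiction inside $N' = L[q,\mathcal{T},S,f]$ (with $S$ the Suslin tree for $\operatorname{Code}(\Lambda)$ and $f:\omega \to \lambda$ cofinal). The crucial point, absent from your argument, is that $\omega_1^{N'} < \omega_1^V$: the model $N'$ satisfies $\square_\omega$, while threadability of $\omega_1$ in $V$ gives $\neg\square_\omega$. So in $N'$ one builds a continuous chain of unstable hulls of length $\omega_1^{N'}$; this chain is countable in $V$, so $\Lambda$ applies to the tree collapsed from its union, and hull condensation then forces a tail of the chain to be stable after all --- contradiction. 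The same $\omega_1^N < \omega_1$ device is also needed in the uncountable-cofinality case to upgrade ``club-many countable hulls of $\mathcal{T}^\frown b$ are by $\Lambda$'' to ``all of them are''. Your Suslin tree is doing only local absoluteness in your sketch; its substantive role in the proof is to make these chains definable inside an inner model whose $\omega_1$ is countable in $V$.
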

\begin{proof}
 Let $\mathcal{T}$ be a putative iteration tree on $\mathcal{Q}$ of length less than $\eta^+$
 and such that every countable hull of $\mathcal{T}$ is by $\Lambda$.
 (A putative iteration tree is like an iteration tree except that its last model, if it has one, is allowed to be illfounded.)
 What we want to show is that if $\mathcal{T}$ has a last model, then this last model is wellfounded, and if $\mathcal{T}$ has limit length, then it has a unique cofinal wellfounded branch $b$ such that 
 every countable hull $\bar{\mathcal{T}}^\frown \bar{b}$ of $\mathcal{T}^\frown b$ is also by $\Lambda$ (in which case our extension of $\Lambda$ can and must choose this branch.)
 
 In the case that $\mathcal{T}$ has a last model, it is easy to see that the last model must be wellfounded; otherwise by taking a Skolem hull (of $L_{\eta^+}[\mathcal{Q},\mathcal{T}]$, say, so that no choice is required) we may obtain a countable hull of $\mathcal{T}$ whose last model is illfounded, but the last model of the hull must be wellfounded because the hull is by the iteration strategy $\Lambda$.
 
 Now suppose that $\mathcal{T}$ has limit length.
 This case will require a bit more work.  First we note that it suffices to find some cofinal branch $b$ of $\mathcal{T}$ such that every countable hull $\bar{\mathcal{T}}^\frown \bar{b}$ of $\mathcal{T}^\frown b$ is by $\Lambda$; then a Skolem hull argument shows that there can be at most one such branch and that any such branch is wellfounded.
  Let $q$ be a real coding the premouse $\mathcal{Q}$. We consider two subcases.

 \begin{enumerate}
  \item\label{item-T-uncountable-cofinality} $\lh(\mathcal{T})$ has uncountable cofinality.
\end{enumerate}

 In this subcase,
 we use the general fact about iteration trees that
 the sequence of branches  $[0,\alpha)_{\mathcal{T}}$ for limit ordinals $\alpha < \lh(\mathcal{T})$ is a coherent sequence of clubs.
 Here $\lh(\mathcal{T})$ is threadable (equivalently, has threadable cofinality,)
 so the tree $\mathcal{T}$ has a unique cofinal branch $b$ obtained by threading this coherent sequence.
 Let $\bar{\mathcal{T}}^\frown \bar{b}$ be a countable hull of $\mathcal{T}^\frown b$.
 We want to show that $\bar{\mathcal{T}}^\frown \bar{b}$ is by $\Lambda$.
 
 Let $x$ be a real coding $\bar{\mathcal{T}}^\frown \bar{b}$.
 The model $N = L[q,\mathcal{T},b,\Lambda,x]$\footnote{We are abusing notation here.  For example, instead of $\Lambda$ itself as a predicate we mean $\{(\mathcal{U},\xi) : \xi \in \Lambda(\mathcal{U})\}$.}
 satisfies $\mathsf{AC}$ and therefore $\square_\omega$, whereas $V$ satisfies ``$\omega_1$ is threadable'' and therefore $\neg \square_\omega$, so $\omega_1^N < \omega_1$.
 Note that the model $N$ sees that $\bar{\mathcal{T}}^\frown \bar{b}$ is a hull of $\mathcal{T}^\frown b$ by the absoluteness of wellfoundedness for the tree of attempts to build a map
 $\lh(\bar{\mathcal{T}}) \to \lh(\mathcal{T})$ witnessing this (or we could just put such a map into the model.)
 The model $N$ also sees, of course, that $\lh(\mathcal{T})$ has uncountable cofinality.
 
 Working in $N$,
 by a Skolem hull argument we can take a hull $\mathcal{T}^{* \frown} b^*$ of $\mathcal{T}^\frown b$ such that
 $\lh(\mathcal{T}^*)$ has cardinalilty and cofinality $\omega_1$ 
 and $\bar{\mathcal{T}}^\frown \bar{b}$ is a hull of $\mathcal{T}^{* \frown} b^*$.
 Because the tree $\mathcal{T}^*$ is countable in $V$ the branch $\Lambda(\mathcal{T}^*)$ is defined,
 and the model $N$ can see it.
 In $N$ the tree $\mathcal{T}^*$ can have at most one cofinal branch because its length has uncountable cofinality, so  $\Lambda(\mathcal{T}^*) = b^*$.
 Therefore the hull $\mathcal{T}^{* \frown} b^*$ is by $\Lambda$, and by hull condensation \emph{its} hull $\bar{\mathcal{T}}^\frown \bar{b}$ is also by $\Lambda$, as desired.
 
 \begin{enumerate}
 \setcounter{enumi}{1}
   \item $\lh(\mathcal{T})$ has countable cofinality.
 \end{enumerate}
 
 In this subcase, 
 we define an elementary substructure $X \prec L_{\eta^+}[\mathcal{Q},\mathcal{T}]$ in $V$ to be \emph{appropriate}
 if $\mathcal{Q} \cup \{\mathcal{Q},\mathcal{T}\} \subset X$, $X$ is countable, and $X \cap \lh(\mathcal{T})$ is cofinal in $\lh(\mathcal{T})$.
 For an appropriate elementary substructure $X \prec L_{\eta^+}[\mathcal{Q},\mathcal{T}]$,
 let $\sigma_X : M_X \to X$ denote the uncollapse map of $X$, define the tree $\mathcal{T}_X = \sigma_X^{-1}(\mathcal{T})$ on $\mathcal{Q}$, and note that $\mathcal{T}_X$ is a hull of $\mathcal{T}$
 as witnessed by the map $\sigma_X \restriction \lh(\mathcal{T}_X)$.

 Furthermore, for any two appropriate elementary substructures $X, Y \prec L_{\eta^+}[\mathcal{Q},\mathcal{T}]$ such that $X\subset Y$,
 let $\sigma_{XY} :M_X \to M_Y$ denote the factor map $\sigma_{Y}^{-1}\circ \sigma_X$
 and note that $\mathcal{T}_X$ is a hull of $\mathcal{T}_Y$ as witnessed by the map $\sigma_{XY} \restriction \lh(\mathcal{T}_X)$.
 
 We say that an elementary substructure $X \prec L_{\eta^+}[\mathcal{Q},\mathcal{T}]$ is \emph{stable} if it is appropriate
 and for every appropriate
 elementary substructure $Y \prec L_{\eta^+}[\mathcal{Q},\mathcal{T}]$ such that $X \subset Y$ we have
 \[ \Lambda(\mathcal{T}_X) = \sigma_{XY}^{-1}[\Lambda(\mathcal{T}_Y)]. \]
 Note that an equivalent condition would be $\sigma_X[\Lambda(\mathcal{T}_X)] \subset \sigma_Y[\Lambda(\mathcal{T}_Y)]$
 because distinct cofinal branches are eventually disjoint.
 
 Assume for the moment that there is a stable elementary substructure $X\prec L_{\eta^+}[\mathcal{Q},\mathcal{T}]$.
 Then we can define the branch $b$ of $\mathcal{T}$
 to be the downward closure of the set $\sigma_X[\Lambda(\mathcal{T}_X)]$ in the $\mathcal{T}$-ordering.
 For every appropriate elementary substructure $Y \prec L_{\eta^+}[\mathcal{Q},\mathcal{T}]$ such that $X \subset Y$, we have $\sigma_Y^{-1}[b] = \Lambda(\mathcal{T}_Y)$.
 Moreover, the tree $\mathcal{T}_Y^\frown \sigma_Y^{-1}[b]$ is a hull of $\mathcal{T}^\frown b$.\footnote{In general if $\bar{\mathcal{U}}$ is a hull of an iteration tree $\mathcal{U}$ as witnessed by a map $\sigma : \lh(\bar{\mathcal{U}}) \to \lh(\mathcal{U})$, $c$ is a cofinal branch of $\mathcal{U}$, and $c \cap \operatorname{range}(\sigma)$ is cofinal in $\lh(\mathcal{U})$, then $\bar{\mathcal{U}}^\frown\sigma^{-1}[c]$ is a hull of $\mathcal{U}^\frown c$.} 
 Therefore club many countable hulls of $\mathcal{T}^\frown b$ are by $\Lambda$ and we can argue as in subcase \eqref{item-T-uncountable-cofinality} that every countable hull of $\mathcal{T}^\frown b$ is by $\Lambda$.
 
 So assume toward a contradiction that there is no stable $X$.
 Let $S$ be a tree on $\omega \times \Ord$ that projects to $\operatorname{Code}(\Lambda)$,
 let $f : \omega \to \lh(\mathcal{T})$ be a cofinal map,
 and define
 the model $N' = L[q,\mathcal{T},S,f]$.  (Recall that $q$ is a real coding the premouse $\mathcal{Q}$.)
 Note that the model $N'$ satisfies the statement ``there is no stable $X$'' as well as $V$ does: for any appropriate elementary substructure $X \prec L_{\eta^+}[\mathcal{Q},\mathcal{T}]$ in $N'$, we may use the absoluteness of wellfoundedness of the tree of attempts to find an appropriate elementary substructure $Y \prec L_{\eta^+}[\mathcal{Q},\mathcal{T}]$ such that $X \subset Y$
 but $\Lambda(\mathcal{T}_X) \ne \sigma_{XY}^{-1}[\Lambda(\mathcal{T}_Y)]$. (We may use the tree $S$ to witness values of $\Lambda$.)
 
 Define $\gamma = \omega_1^{N'}$ and note that $\gamma < \omega_1$, just as for the model $N$ in the uncountable cofinality case.
 In the model $N'$ we can build a continuous, $\subset$-increasing sequence $(X_\alpha : \alpha \le \gamma)$ of appropriate elementary substructures of $L_{\eta^+}[\mathcal{Q},\mathcal{T}]$ 
 such that 
 \[\Lambda(\mathcal{T}_\alpha) \ne \sigma_{\alpha,\alpha+1}^{-1}[\Lambda(\mathcal{T}_{\alpha+1})]\] for all $\alpha < \gamma$,
 where we define $\mathcal{T}_\alpha = \mathcal{T}_{X_\alpha}$, $\sigma_\alpha = \sigma_{X_\alpha}$, \emph{etc.}
 
 Define the cofinal branch $b = \Lambda(\mathcal{T}_{\gamma})$ of $\mathcal{T}_{\gamma}$ and
 note that this branch is in the model $N'$ because it can be computed using the tree $S \in N'$.
 For all sufficiently large $\alpha < \gamma$ the intersection $b \cap \sigma_{\alpha,\gamma}[\lh(\mathcal{T}_\alpha)]$ is cofinal in $\lh(\mathcal{T}_\gamma)$,
 which implies that the tree $\mathcal{T}_{\alpha}^\frown \sigma_{\alpha,\gamma}^{-1}[b]$ is a hull of
 $\mathcal{T}_{\gamma}^\frown b$.
 So by hull condensation we have $\sigma_{\alpha,\gamma}^{-1}[b] = \Lambda(\mathcal{T}_{\alpha})$ for all such $\alpha$,
 and by considering such an $\alpha$ and its successor we get $\Lambda(\mathcal{T}_\alpha) = \sigma_{\alpha,\alpha+1}^{-1}[\Lambda(\mathcal{T}_{\alpha+1})]$, a contradiction. 
\end{proof}

\section{$\Omega^*$ IS CONSTRUCTIBLY CLOSED}
\label{omega_closed}

The main theorem of this section is the following.
\begin{theorem}[$\sf{ZF+DC}_\mathbb{R}$]\label{OmegaClosed}
Assume there are no transitive $\sf{AD}^+$ models $M$ containing $\mathbb{R}\cup \rm{OR}$ such that there is a pointclass $\Gamma\subsetneq \powerset(\mathbb{R})^M$ such that $L(\Gamma)\cap\powerset(\mathbb{R})=\Gamma$ and $L(\Gamma)\vDash \sf{AD}_\mathbb{R} + \sf{DC}$. Then $L(\Omega^*)\cap \powerset(\mathbb{R}) = \Omega^*$.
\end{theorem}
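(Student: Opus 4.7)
The plan is to argue by contradiction. Assume $L(\Omega^*)\cap \powerset(\mathbb{R})\supsetneq \Omega^*$; we derive a contradiction with the smallness hypothesis. Note first that the hypothesis of the theorem implies $(\dag)$, so $\Omega^*$ is a Wadge hierarchy and the core model induction machinery of Sections \ref{OneStep}--\ref{section:Theta-gt-theta-Sigma} is available.

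First I would show $L(\Omega^*)\vDash \sf{AD}^+$. For each hod or suitable pair $(\P,\Sigma)$ produced by the core model induction, the arguments of Section \ref{CMIInsideLp} (iterated as in Section \ref{section:Theta-gt-theta-Sigma}) give the corresponding maximal $\sf{AD}^+$-model $L(\Omega_\Sigma,\mathbb{R})$ as an inner model of $L(\Omega^*)$; by $(\dag)$ the collection of these models is linearly ordered by Wadge and cofinal in $L(\Omega^*)$. Since $\sf{AD}^+$ is preserved under such Wadge-increasing directed unions of $\sf{AD}^+$-submodels (scales and Suslin representations transfer upward through the chain), $L(\Omega^*)\vDash \sf{AD}^+$. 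Full $\sf{DC}$ in $L(\Omega^*)$ follows from $\sf{DC}_\mathbb{R}$ in $V$ by the standard observation that every element of $L(\Omega^*)$ is a surjective image of $\mathbb{R}\times \alpha$ for some ordinal $\alpha$.

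Next, observe that under our assumption $\Omega^*\subsetneq \Gamma_0:=L(\Omega^*)\cap \powerset(\mathbb{R})$, any $A\in \Gamma_0\setminus \Omega^*$ cannot lie in any $L(\Omega_\Sigma,\mathbb{R})$ with $\Sigma$ indexing $\Omega^*$, since the sets of reals of that model are $\Omega_\Sigma\subseteq \Omega^*$. Hence the Solovay sequence of $L(\Omega^*)$ strictly exceeds every $\theta_\Sigma$ coming from $\Omega^*$; applying Sargsyan's hod-mouse direct-limit analysis from \cite{ATHM}, the Solovay sequence of $L(\Omega^*)$ has limit length, and $\sf{DC}$ in $L(\Omega^*)$ forces its cofinality to be uncountable. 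By Solovay \cite{solovay1978independence} this is equivalent to $L(\Omega^*)\vDash \sf{AD}_\mathbb{R}+\sf{DC}$, and a further application of the same analysis, using that $\Gamma_0$ strictly exceeds $\bigcup_\Sigma \Omega_\Sigma = \Omega^*$, shows that the Solovay sequence of $L(\Omega^*)$ exceeds the minimal $\sf{AD}_\mathbb{R}+\sf{DC}$-length of $\omega_1$. Consequently $L(\Omega^*)$ sees a proper inner model $L(\Gamma)$ of $\sf{AD}_\mathbb{R}+\sf{DC}$ with $\Gamma\subsetneq \Gamma_0$ and $L(\Gamma)\cap \powerset(\mathbb{R})=\Gamma$; setting $M=L(\Omega^*)$ then violates the smallness hypothesis.

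The main obstacle will be the third paragraph: one must pin down explicit hod pairs inside $L(\Omega^*)$ whose direct limit drives the Solovay sequence strictly beyond length $\omega_1$. This requires pushing the self-justifying-system construction of Lemma \ref{lemma:sjs-from-strong-compactness} one level further past the Wadge-cofinal collection $\bigcup_\Sigma \Omega_\Sigma$, essentially one additional iteration of the gap analysis of Sections \ref{CMIInsideLp} and \ref{section:Theta-gt-theta-Sigma}, now applied above the maximal-$\sf{AD}^+$-with-$\Theta=\theta_\Sigma$ stratification.
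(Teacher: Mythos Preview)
Your approach has a circularity at the outset. You claim $L(\Omega^*)\vDash\sf{AD}^+$ because ``$\sf{AD}^+$ is preserved under Wadge-increasing directed unions of $\sf{AD}^+$-submodels,'' but that argument only shows that every set of reals in $\Omega^*=\bigcup_\Sigma\Omega_\Sigma$ has the $\sf{AD}^+$ properties; it says nothing about sets in $\Gamma_0\setminus\Omega^*$, and you are explicitly assuming such sets exist. Your assertion that the models $L(\Omega_\Sigma,\mathbb{R})$ are ``cofinal in $L(\Omega^*)$'' is either false (under your hypothesis $\Gamma_0\supsetneq\Omega^*$) or is the theorem itself. Establishing $L(\Omega^*)\vDash\sf{AD}^+$ is essentially equivalent here to proving $\powerset(\mathbb{R})\cap L(\Omega^*)=\Omega^*$, so you have assumed the conclusion. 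The subsequent claims---that the Solovay sequence of $L(\Omega^*)$ has limit length, and then exceeds $\omega_1$---are unsupported without $\sf{AD}^+$ in $L(\Omega^*)$, and even granting it the jump from ``the sequence exceeds every $\theta_\Sigma$ for $\Sigma$ indexing $\Omega^*$'' to ``the sequence has limit length'' is unjustified: a single new $\theta$ beyond $\Theta^{\Omega^*}$ would yield successor length.

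The paper's proof is entirely different and does not pass through $\sf{AD}^+$ in $L(\Omega^*)$. In the limit case for the Solovay sequence of $\Omega^*$, it forms the direct limit $\mathcal{H}$ of all $\Omega^*$-fullness-preserving hod pairs with branch condensation, proves that no level of $L[\mathcal{H}]$ projects across $\Theta^*$ (Lemma~\ref{NotProjecting}), and then shows via a derived-model resemblance argument (Lemma~\ref{DMResemblance}) that the Vop\v{e}nka algebras for adding bounded subsets of each $\theta_\alpha^*$ are uniformly definable inside $L[\mathcal{H}]$ (Corollary~\ref{PIsIn}). This yields that $L[\mathcal{H}](\Omega^*)$ is a symmetric extension of $L[\mathcal{H}]$ with $\powerset(\mathbb{R})\cap L[\mathcal{H}](\Omega^*)=\Omega^*$, and since $L(\Omega^*)\subseteq L[\mathcal{H}](\Omega^*)$ the theorem follows. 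The successor case is handled directly by the analysis of Section~\ref{CMIInsideLp}. None of this hod-mouse and Vop\v{e}nka machinery appears in your outline, and the ``one additional iteration of the gap analysis'' you propose in the last paragraph would itself need $\sf{AD}^+$ in $L(\Omega^*)$ as input.
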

\begin{remark}
We note that the smallness assumption in Theorem \ref{OmegaClosed} is stronger than $(\dag)$. It allows for the existence of a minimal model of ``$\sf{AD}_\mathbb{R}+\sf{DC}$" but not much more. The Solovay sequence of the minimal model of ``$\sf{AD}_\mathbb{R}+\sf{DC}$" has length $\omega_1$. We will use $(\dag^+)$ to denote this hypothesis.
\end{remark}
We assume $(\dag^+)$ throughout this section. Suppose the Solovay sequence of $\Omega^*$ is of successor length. Then by Section \ref{CMIInsideLp}, $\Omega^* = \powerset(\mathbb{R})\cap M$, where for some operator $\F$,
\begin{center}
$M = \bigcup\{\M \lhd \rm{Lp}$$^{^\gTheta\F}(\mathbb{R},\rm{Code}(\F))$$ \ | \ \M\vDash \sf{AD}^+ \wedge \M \textrm{ is self-iterable} \}$,\footnote{This means whenever $\M^*$ is countable, transtive and there is an elementary embedding from $\M^*$ into $\M$, then $\M^*$ is $(\omega,\omega_1+1)$-$\F$-iterable.}
\end{center}
and furthermore, Section \ref{CMIInsideLp} also shows that 
\begin{center}
$\powerset(\mathbb{R})\cap M = \powerset(\mathbb{R})\cap L[M]$.
\end{center}
Clearly, this then shows that $\Omega^* = \powerset(\mathbb{R})\cap L(\Omega^*)$.

Suppose now the Solovay sequence of $\Omega^*$ is of limit length. Let $\mathcal{H}$ be the direct limit of all hod pairs $(\Q,\Lambda)\in \Omega^*$ such that $\Lambda$ has branch condensation and is $\Omega^*$-fullness preserving. $\mathcal{H}$ is a union of hod premice and by $(\dag)$ and \cite{ATHM}, $\mathcal{H}$ has ordinal height $\Theta^{\Omega^*}$.\footnote{In fact, the universe of $\mathcal{H}$ is precisely the set of all $A$ bounded subset of $\Theta^{\Omega^*}$ such that $A$ is $OD$ in $L(B,\mathbb{R})$ for some $B\in \Omega^*$.} Let $\lambda$ be the order type of the Solovay sequence of $\Omega^*$; so $\lambda$ is a limit ordinal by the previous sections. By the smallness assumption of the theorem, $\lambda\leq \omega_1$. From now on, we write $\Theta^*$ for $\Theta^{\Omega^*}$ and $\theta^*_\alpha$ for each $\theta^{\Omega^*}_\alpha$ on the Solovay sequence of $\Omega^*$.

The following is the main lemma.
\begin{lemma}[$\sf{ZF} + \sf{DC}_\mathbb{R}$]\label{NotProjecting}
There are no $\M\unlhd L[\mathcal{H}]$ such that $\mathcal{H}\in \M$ and $\rho_\omega(\M)<\Theta^*$.
\end{lemma}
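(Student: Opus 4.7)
The plan is to argue by contradiction. Suppose $\M\unlhd L[\mathcal{H}]$ is a minimal counterexample, so $\mathcal{H}\in\M$ and $\rho=\rho_\omega(\M)<\Theta^*$ while every strict initial segment $\M'\pins\M$ with $\mathcal{H}\in\M'$ has $\rho_\omega(\M')\geq\Theta^*$. Fix $n<\omega$ with $\rho_{n+1}(\M)\leq\rho<\rho_n(\M)$. Standard fine-structural theory applied to levels of $L[\mathcal{H}]$ produces a $\Sigma_{n+1}(\M)$ surjection $f\colon\rho\to\M$. Since $\mathcal{H}\in\M$ has height $\Theta^*$, composing $f$ with the height function on $\mathcal{H}$ yields a surjection $\tilde{f}\colon\rho\twoheadrightarrow\Theta^*$ that is definable over $\M$ from finitely many ordinal parameters.

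Since $\lambda$ is a limit ordinal and $\rho<\Theta^*=\sup_{\alpha<\lambda}\theta^*_\alpha$, we may fix $\beta<\lambda$ with $\rho<\theta^*_\beta$. The key step is to show that $\tilde{f}$ is ordinal-definable in $V$ from a set of reals in $\Omega^*$ of Wadge rank bounded strictly below $\Theta^*$. For this, we invoke that $\mathcal{H}$ is the directed union of the hod premice $\P$ arising from hod pairs $(\P,\Sigma)\in \Omega^*$ with $\Sigma$ having branch condensation and being $\Omega^*$-fullness preserving; each such $\Sigma$ lies in $\Omega^*$ at Wadge rank strictly below $\Theta^*$. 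Since $\tilde{f}$ is definable over an initial segment of $L[\mathcal{H}]$ containing $\mathcal{H}$, the hod-mouse-hierarchical analysis of \cite{ATHM} shows $\tilde{f}$ is ordinal-definable from a single strategy $\Sigma^\mathcal{H}_\delta$ for some $\delta<\lambda$, whose code $A$ has Wadge rank below $\theta^*_{\delta+1}$.

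Now since $\rho<\theta^*_\beta$, there is an $\mathrm{OD}(B)$ surjection $\mathbb{R}\twoheadrightarrow\rho$ for some $B\in\Omega^*$ of Wadge rank below $\theta^*_\beta$. Composing with $\tilde{f}$ yields a surjection $\mathbb{R}\twoheadrightarrow\Theta^*$ that is $\mathrm{OD}$ from $C=A\oplus B\in\Omega^*$, a set of Wadge rank at most $\theta^*_{\gamma+1}$ where $\gamma=\max(\delta,\beta)<\lambda$. By the defining clause of the Solovay sequence applied to $C$, the existence of such a surjection forces $\Theta^*\leq\theta^*_{\gamma+2}$. But $\lambda$ is a limit ordinal, so $\gamma+2<\lambda$ and hence $\theta^*_{\gamma+2}<\Theta^*$, a contradiction.

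The main obstacle will be the second paragraph: cleanly showing that every relevant level of $L[\mathcal{H}]$ strictly below $\M$ is ordinal-definable from a set of reals of Wadge rank strictly below $\Theta^*$. This requires the minimality of $\M$---so that no strict initial segment of $L[\mathcal{H}]$ between $\mathcal{H}$ and $\M$ projects below $\Theta^*$, and thus each such segment is ``captured'' by a bounded portion of the hod hierarchy of $\mathcal{H}$---together with the uniform definability of the strategies $\Sigma^\mathcal{H}_\alpha$ in $\Omega^*$ afforded by $\Omega^*$-fullness preservation and the theory of hod mice developed in \cite{ATHM}. The use of $\mathsf{DC}_\mathbb{R}$ is confined to the ambient reasoning about ordinal-definable surjections from $\mathbb{R}$.
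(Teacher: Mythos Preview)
Your proposal has a genuine gap in the second paragraph. You claim that $\tilde{f}$ (and hence $\M$) is ordinal-definable from a single strategy $\Sigma^{\mathcal{H}}_\delta$ with $\delta<\lambda$, invoking ``the hod-mouse-hierarchical analysis of \cite{ATHM}'' without further argument. But this is precisely the heart of the matter, and it does not follow from anything in \cite{ATHM}. The structure $\M$ contains all of $\mathcal{H}$, which has height $\Theta^*$ and (via its internal strategy predicates) encodes $\Sigma_\alpha$ for every $\alpha<\lambda$; there is no reason to expect $\M$ to be definable from a proper initial segment of this data. More fundamentally, you never specify in which model the ordinal definability is to be computed. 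For the Solovay-sequence contradiction to fire, the surjection $\mathbb{R}\to\Theta^*$ must be $\mathrm{OD}(C)$ in a model where $(\theta^*_\alpha)_{\alpha<\lambda}$ is actually the Solovay sequence. But for any $C\in\Omega^*$ of bounded Wadge rank, $L(C,\mathbb{R})$ has $\Theta^{L(C,\mathbb{R})}<\Theta^*$ and so does not even contain $\mathcal{H}$; and for $L(\Omega^*,\mathbb{R})$ we have not yet established $\mathsf{AD}^+$---that is the goal of this section. You correctly flag this as ``the main obstacle,'' but the proposal does not overcome it.

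The paper's argument avoids this circularity by reflecting to a countable elementary substructure $\sigma\prec L_\gamma(\mathbb{R},B,\N)$ (this is where $\mathsf{DC}$, available inside that model, is used). Letting $\N_\sigma$, $\mathcal{H}_\sigma$ be the transitive collapses, each $\Sigma_{\mathcal{H}_\sigma(\alpha)}$ is now a genuine pullback strategy lying in $\Omega^*$, and the subset $A\subseteq\delta_n^{\N_\sigma}$ witnessing $\rho_{k+1}(\N_\sigma)\leq\delta_n^{\N_\sigma}$ is shown to be $\mathrm{OD}_{\Sigma_{\mathcal{H}_\sigma(n)}}$ in a concrete model $L(C,\mathbb{R})$ with $C\in\Omega^*$. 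One then invokes Strong Mouse Capturing in $L(C,\mathbb{R})$ together with $\Omega^*$-fullness of $\N_\sigma(n+1)$ to conclude $A\in\N_\sigma$, contradicting the choice of $A$. The decisive tool you are missing is $\mathsf{SMC}$: the contradiction is not a Solovay-sequence bound on surjections, but the capture of a specific bounded subset of $\delta_n^{\N_\sigma}$ inside the hod mouse itself.
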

\begin{proof}
Suppose not. Let $\N\unlhd L[\mathcal{H}]$ be least such that $\rho_\omega(\N)<\Theta^*$. Let $B\in \Omega^*$ be of Wadge rank $\theta^*_{n+1}$ where $n<\lambda$ is such that $\rho_\omega(\N) \leq \theta_n^*$ and $\theta^*_n \geq \upsilon$, where $\upsilon$ is the $\N$-cofinality of $\lambda$. Suppose $k$ is the least such that a $\rho_{k+1}(\N)<\Theta^*$; we may assume $\rho_{k+1}(\N)\leq\theta_n^*$. Let $M = L_\gamma(\mathbb{R},B,\N)$, where $\gamma$ is some sufficiently large cardinal so that $L_\gamma(\mathbb{R},B,\N)\vDash \sf{ZF}^-+\sf{DC}$.

For countable $\sigma\prec M$ containing all relevant objects, let $\pi_\sigma: M_\sigma\rightarrow M$ be the transitive uncollapse map whose range is $\sigma$. Such a $\sigma$ exists by $\sf{DC}$ in $L(\mathbb{R},B,\N)$. For each such $\sigma$, let $\pi_\sigma(\mathcal{H}_\sigma,\Theta_\sigma,\lambda_\sigma, \N_\sigma, B_\sigma,\upsilon_\sigma) = (\mathcal{H},\Theta^*,\lambda,\N, B,\upsilon)$. Let $\Sigma_\sigma^- = \oplus_{\alpha<\lambda_\sigma} \Sigma_{\mathcal{H}_\sigma(\alpha)}$. Note that for each $\alpha<\lambda_\sigma$, $\Sigma_{\mathcal{H}_\sigma(\alpha)}$ acts on all countable stacks as it is the pullback of some hod pair $(\R,\Lambda)$ with the property that $\M_\infty(\R,\Lambda) = \mathcal{H}(\pi_\sigma(\alpha))$.

Let $\sigma\prec M$ be such that $\omega_1^{M_\sigma}>n$; this is possible since $n <\lambda\leq \omega_1$. $\Sigma_{\mathcal{H}_\sigma(n+1)}$ is $\Omega^*$-fullness preserving and has branch condensation. This follows from the choice of $B$, which gives that $(\mathcal{H}_\sigma(n+1),\Sigma_{\mathcal{H}_\sigma(n+1)})$ is a tail of some hod pair $(\Q,\Lambda)\in M_\sigma$ such that $\Q$ has $n+1$ Woodin cardinals and $\Lambda$ has branch condensation and is $\Omega^*$-fullness preserving. We let $\Sigma_\sigma^{n}$ be the fragment of $\Sigma^-_\sigma$ for stacks on $\N_\sigma$ above $\delta^{\N_\sigma}_n$. Note that $\Sigma_\sigma^n$ is an iteration strategy of $\N_\sigma$ above $\delta^{\N_\sigma}_n$  since $\Sigma^n_\sigma$-iterations are above $\upsilon_\sigma$, which may be measurable in $\N_\sigma$, and hence does not create new Woodin cardinals. $\Sigma_\sigma^n$ has branch condensation. We then have that $\Sigma_\sigma^n\in \Omega^*$; otherwise, by results in the previous sections, we can show $L(\Sigma^n_\sigma,\mathbb{R})\vDash \sf{AD}^+$ and this contradicts the definition of $\Omega^*$.\footnote{We also have that $\Sigma_\sigma^n$ is the join of countably many sets of reals, each of which is in $\Omega^*$ and hence is Suslin co-Suslin. This implies that $\Sigma_\sigma^n$ is self-scaled.} Also, by \cite[Theorem 3.26]{ATHM}, $\Sigma^n_\sigma$ is $\Gamma=_{\rm{def}} \Gamma(\N_\sigma,\Sigma^n_\sigma)$-fullness preserving.


We then consider the directed system $\mathcal{F}$ of tuples $(\Q,\Lambda)$ where $\Q$ agrees with $\N_\sigma$ up to $\delta_n^{\N_\sigma}$, and $(\Q,\Lambda)$ is Dodd-Jensen equivalent to  $(\mathcal{H}_\sigma,\Sigma_\sigma^{n})$, that is $(\Q,\Lambda)$ and $(\mathcal{H}_\sigma,\Sigma_\sigma^{n})$ coiterate (above $\delta_n^{\N_\sigma}$) to a hod pair $(\R,\Psi)$. $\mathcal{F}$ can be characterized as the directed system of hod pairs $(\Q,\Lambda)$ extending $(\N_\sigma(n),\Sigma_{\N_\sigma(n)})$ such that $\Gamma(\Q,\Lambda)= \Gamma$, $\Lambda$ has branch condensation and is $\Gamma$-fullness preserving. We note that $\F$ is $OD_{\Sigma_{\mathcal{H}_\sigma(n)}}$ in $L(C,\mathbb{R})$ for some $C\in\Omega^*$. We fix such a $C$; so $L(C,\mathbb{R})\vDash \sf{AD}^+ + \sf{SMC}$. Let $A\subseteq \delta^{\N_\sigma}_n$ witness $\rho_{k+1}(\N_\sigma)\leq \delta_n^{\N_\sigma}$. Then $A$ is $OD_{\Sigma_{\mathcal{H}_\sigma(n)}}$ in $L(C,\mathbb{R})$. By $\sf{SMC}$ in $L(C,\mathbb{R})$ and the fact that $\N_\sigma(n+1)$ is $\Omega^*$-full, $A\in \rm{Lp}$$^{\Sigma_{\mathcal{H}_\sigma(n)}}(\N_\sigma|\delta^{\N_\sigma}_n)\in \N_\sigma$. This contradicts the definition of $A$.

\end{proof}

In the following, we write, for $\alpha<\lambda$, $\powerset_{\theta_\alpha}(\mathbb{R})$ for $(\powerset_{\theta_\alpha}(\mathbb{R}))^{\Omega^*}$, $\Sigma_\alpha$ for $\Sigma^{\mathcal{H}}_\alpha$. We also need the following notation: let $(\P,\Sigma)\in\Omega^*$ be a hod pair; let $\M^\sharp_{\P,\Sigma} = \M_\omega^{\Sigma,\sharp}$ be the minimal $\P$-sound, active $\Sigma$-mouse with $\omega$ many Woodin cardinals $\delta_0^{\M_{\P,\Sigma}}<\delta_1^{\M_{\P,\Sigma}} < \dots$, and $\delta_\omega^{\M_{\P,\Sigma}} = $ sup$_i\delta_i^{\M_{\P,\Sigma}}$.\footnote{Sections \ref{OneStep} and \ref{CMIInsideLp} show that $\M^\sharp_{\P,\Sigma}$ exists and its canonical strategy is in $\Omega^*$.} Finally, we let $\M_{\P,\Sigma}=\M_{\omega}^{\Sigma}$ be the corresponding class mouse obtained from $\M^\sharp_{\P,\Sigma}$ by iterating the top extender OR many times. We remind the reader that at this point, we assume that $\lambda$ is a limit ordinal.
\begin{lemma}[$\sf{ZF+DC}_\mathbb{R}$]\label{DMResemblance}
Fix $s\in (\Theta^*)^{<\omega}$ and $\alpha<\lambda$ be such that $s\in (\theta^*_\alpha)^{<\omega}$.  Then for any formula $\psi$, for any hod pair $(\Q,\Lambda)\in\Omega^*$ such that $\Lambda$ is $\Omega^*$-fullness preserving, has branch condensation, and $\Gamma(\Q,\Lambda)=\powerset_{\theta_\alpha}(\mathbb{R})$,
\begin{center}
$L(\Lambda,\mathbb{R}) \vDash \psi[s] \Leftrightarrow \M_{\alpha,\infty}\vDash$ ``the derived model   satisfies $\psi[i^{\Sigma_\alpha}_{\mathcal{H}(\alpha),\infty}(s)]$", \ \ \ $(\textasteriskcentered)$
\end{center}
where $\M_{\alpha,\infty}$ is the direct limit of all iterates of $\M_{\Q,\Lambda}$ below $\delta_0^{\M_{\Q,\Lambda}}$ via its canonical strategy. 
\end{lemma}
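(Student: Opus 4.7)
My plan is to prove this as a standard Derived Model resemblance, adapted to the hybrid setting of $(\Q,\Lambda)$. Throughout, I will use that by Sections \ref{OneStep} and \ref{CMIInsideLp}, $\M_{\Q,\Lambda}$ exists and its canonical $\omega_1$-strategy lies in $\Omega^*$, so $\M_{\alpha,\infty}$ is well-defined and the ``below-$\delta_0$'' iteration embeddings preserve the sequence $\langle \delta_i^{\M_{\Q,\Lambda}} : i \le \omega\rangle$ of Woodins. I will also tacitly use that $\Lambda$, having branch condensation and being $\Omega^*$-fullness preserving with $\Gamma(\Q,\Lambda)=\powerset_{\theta_\alpha}(\mathbb{R})$, generates all of $\powerset_{\theta_\alpha}(\mathbb{R})$ as Wadge initial segment.

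First I would set up the Derived Model side. Let $\mathcal{N}=\M_{\alpha,\infty}$, with Woodin sequence $\langle \delta_i : i<\omega\rangle$ and $\delta_\omega = \sup_i \delta_i$, and let $D(\mathcal{N})$ denote its derived model at $\delta_\omega$, computed from any $\mathcal{N}$-generic $G\subseteq \Coll(\omega,{<}\delta_\omega)$ using the symmetric reals $\mathbb{R}^*_G$ and the symmetric sets of reals $\Gamma^*_G$. By the Derived Model Theorem relativized to the $\Lambda$-hybrid hierarchy (which applies because below $\delta_0$ the iteration doesn't disturb the $\omega$ Woodins, and $\mathcal{N}$ inherits $\omega_1$-iterability from $\M_{\Q,\Lambda}$), $D(\mathcal{N})\models\AD^+$, and the canonical copy $\Lambda^*$ of $\Lambda$ living on $\mathbb{R}^*_G$ belongs to $D(\mathcal{N})$. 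Moreover $\Gamma^*_G$ identifies with $\Gamma(\Q,\Lambda)=\powerset_{\theta_\alpha}(\mathbb{R})^V$ under the natural Wadge-isomorphism given by absoluteness of the tree representations for $\Lambda$ and its tails; in particular $L(\Lambda^*,\mathbb{R}^*_G)^{D(\mathcal{N})}$ is canonically isomorphic to $L(\Lambda,\mathbb{R})^V$.

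Next I would handle the translation of the parameter $s$. Since $s\in(\theta^*_\alpha)^{<\omega}$, we may view $s$ as a tuple of ordinals of $\mathcal{H}(\alpha)$. The direct limit embedding $i^{\Sigma_\alpha}_{\mathcal{H}(\alpha),\infty}$ maps $\mathcal{H}(\alpha)$ into the hod initial segment $\mathcal{H}_\infty(\alpha)$ of the HOD of $L(\Lambda,\mathbb{R})$, and standard HOD-analysis identifies $\mathcal{H}_\infty(\alpha)$ with a hod initial segment of $\mathcal{N}$ of height below $\delta_0^{\mathcal{N}}$ (this identification is the point of taking below-$\delta_0$ iterates of $\M_{\Q,\Lambda}$). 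Under this identification $i^{\Sigma_\alpha}_{\mathcal{H}(\alpha),\infty}(s)$ is a tuple of ordinals of $\mathcal{N}$, and by elementarity of the direct limit maps together with fullness preservation of $\Lambda$, the image corresponds to the same ordinal parameter as seen by $L(\Lambda^*,\mathbb{R}^*_G)^{D(\mathcal{N})}$ under the isomorphism of the previous paragraph.

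Finally I would conclude by combining the two. The canonical isomorphism $L(\Lambda,\mathbb{R})^V \cong L(\Lambda^*,\mathbb{R}^*_G)^{D(\mathcal{N})}$ sends $s$ to $i^{\Sigma_\alpha}_{\mathcal{H}(\alpha),\infty}(s)$, so the equivalence $(\textasteriskcentered)$ is immediate once we express the right-hand side as ``$\mathcal{N}\models$ `$D(\mathcal{N})$ at $\delta_\omega$ satisfies $\psi[i^{\Sigma_\alpha}_{\mathcal{H}(\alpha),\infty}(s)]$''', which is a first-order assertion about $\mathcal{N}$ by definability of the derived-model-forcing. The main obstacle, and the one requiring the most care, is justifying that the direct limit under below-$\delta_0$ iterations correctly computes $\mathcal{H}_\infty$ and that the derived model of $\mathcal{N}$ really is $L(\Lambda,\mathbb{R})$; here I would rely on branch condensation of $\Lambda$ (to compare the direct limit systems), on $\Omega^*$-fullness preservation (to ensure $\Gamma^*_G$ exhausts $\powerset_{\theta_\alpha}(\mathbb{R})$ and not a proper initial segment), and on $\textsf{MC}(\Lambda)$ together with $(\dag^+)$ (to rule out extraneous sets of reals in the derived model). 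Once these are in place the lemma follows.
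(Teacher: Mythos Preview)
Your proposal has a genuine gap at its central step: the claimed ``canonical isomorphism'' between $L(\Lambda,\mathbb{R})^V$ and the derived model $D(\mathcal{N})$ of $\mathcal{N}=\M_{\alpha,\infty}$ does not exist in any literal sense, and the elementary equivalence you need (with the correct parameter translation $s\mapsto i^{\Sigma_\alpha}_{\mathcal{H}(\alpha),\infty}(s)$) is precisely the content of the lemma, not a free consequence of the Derived Model Theorem. The symmetric reals $\mathbb{R}^*_G$ over $\mathcal{N}$ are not $\mathbb{R}^V$, so ``absoluteness of tree representations'' does not by itself produce an identification of the two models; what is true is that $L(\Lambda,\mathbb{R})$ is the derived model of a \emph{further} iterate of $\M_{\Q,\Lambda}$ obtained by an $\mathbb{R}^V$-genericity iteration, and one must then relate $\M_{\alpha,\infty}$ to that iterate. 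You acknowledge this is ``the main obstacle'' but the ingredients you list (branch condensation, fullness preservation, $\textsf{MC}(\Lambda)$) do not constitute an argument.

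The paper's proof supplies the missing mechanism. One assumes $L(\Lambda,\mathbb{R})\vDash\neg\psi[s]$, fixes a cofinal sequence $(\P_n,\Sigma_n)$ in the below-$\delta_0$ direct limit system with $\P_0=\M_{\Q,\Lambda}$, and performs a \emph{simultaneous $\mathbb{R}$-genericity iteration} producing models $\P_n^\omega$ whose derived model (for the specific generic making $\mathbb{R}^V$ generic) is $L(\Lambda,\mathbb{R})$; this yields $\P_n^\omega\vDash 1\Vdash\text{``the derived model satisfies }\neg\psi[\dots]\text{''}$. Taking the direct limit gives $\P^\omega_\omega$ together with an elementary map $j_\omega:\M_{\alpha,\infty}\to\P^\omega_\omega$ fixing the relevant parameter, and one pulls the forced statement back along $j_\omega$ to $\M_{\alpha,\infty}$, contradicting the right-hand side. (There is also the technical point, noted in the paper, that one should run this inside an $L[T,x]$ for a suitable tree $T$ rather than in $V^{\Coll(\omega,\mathbb{R})}$, since iterability of $(\P,\Sigma)$ in the full collapse extension is not available.) The genericity iteration is not optional scaffolding; it is what connects the internal derived-model forcing statement of $\M_{\alpha,\infty}$ to truth in $L(\Lambda,\mathbb{R})^V$, and without it your argument is circular.
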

\begin{proof}
Fix $s,\psi,\alpha,(\Q,\Lambda)$ as in the statement of the lemma. First we note that $\Sigma_\alpha$ is a tail of $\Lambda$. Let $\P=\M_{\Q,\Lambda}$ and $\Sigma$ be the canonical strategy of $\P$ extending $\Lambda$. Note that for any $\Sigma$-iterate $\P^*$ of $\Sigma$, we can iterate $\P^*$ using $\Sigma$ to some $\P'$ such that $L(\Lambda,\mathbb{R})$ the derived model of $\P'$ at $\delta_{\omega}^{\P'}$.\footnote{This is analogous to the fact that $L(\mathbb{R})$ is the derived model of an iterate of $\M_\omega$.} We may assume also that $s$ is in the range of the direct limit map from $\P$ into $\M_{\alpha,\infty}$.

Suppose the left hand side of the equivalence fails, that is
\begin{center}
$L(\Lambda,\mathbb{R})\vDash \neg \psi[s]$.
\end{center}
Work in $V^{Col(\omega,\mathbb{R})}$, let $\{\P_n,\Sigma_n) \ | \ n<\omega\wedge (\P,\Sigma_n)\in I(\P,\Sigma)\}$ be cofinal in the directed system of $\Sigma$-iterates below $\delta_0^{\P}$; here we take $(\P_0,\Sigma_0)=(\P,\Sigma)$.\footnote{There is an awkward point here. We don't know that $(\P,\Sigma)$ is iterable in $V^{Col(\omega,\mathbb{R})}$; but we can run the argument below inside an $L[T,x]$ where $T$ is a tree projecting some universal $\Gamma$ set $A$ and $\Gamma$ is an inductive-like, scaled pointclass beyond $\powerset_{\theta_\alpha}(\mathbb{R})$ and $x\in\mathbb{R}$ codes $\P$ as well as the reduction of $A$ to $Code(\Sigma)$. We may also assume $(\textasteriskcentered)$ is absolute between $\Omega$ and the model $L[T,x]$. Since $\mathbb{R}\cap L[T,x]$ is countable, we can proceed with the argument below pretending that $V$ is $L[T,x]$.} For $m\leq n<\omega$, let $i_n:\P_n\rightarrow \P_{n+1}$ be the iteration map and $i_{m,n}:\P_m\rightarrow \P_n$, $i_{m,\infty}:\P_m\rightarrow \M_{\alpha,\infty}$ be the natural maps. Set $s_0 = i_{0,\infty}^{-1}(s)$ and let $s_n = i_{0,n}(s_0)$. Let $(\P^\omega_k : k<\omega)$, $(\pi_{k,l}:\P^\omega_k\rightarrow \P^\omega_l : k\leq l<\omega)$ come from the simultaneous $\mathbb{R}$-genericity iteration construction described in \cite[Lemma 6.51]{steel2012hod}. We also let $j_i:\P_i\rightarrow \P_i^\omega$ be the iteration map; here the iterations are above the $s_i$'s, i.e. 
\begin{center}
$j_i(s_i) = s_i$.
\end{center}
By properties of the construction, for $k\leq l<\omega$
\begin{center}
$j_l\circ i_{k,l} = \pi_{k,l}\circ j_k$.
\end{center}
Let $\P^\omega_\omega$ be the direct limit of $\P^\omega_k$ under embeddings $\pi_{k,l}$'s and let $\pi_{i,\omega}:\P^\omega_i\rightarrow \P^\omega_\omega$, $j_\omega:\M_{\alpha,\infty}\rightarrow \P^\omega_\omega$ be the natural maps. Note that $j_\omega(s)=s$.

By our assumptions, for each $i$, 
\begin{center}
$\P^\omega_i \vDash 1 \Vdash$ ``the derived model satisfies $\neg \psi[s] + s = i_{\P^\omega_i,\infty}(s_i)$".
\end{center}
Let $k$ be such that for all $l\geq k$, $\pi_{l,l+1}(s)=s$ ($k$ exists because $\P^\omega_\omega$ is well-founded), and let $s^* = \pi_{k,\omega}(s)$. By elementarity,
\begin{center}
$\P^\omega_\omega \vDash 1 \Vdash$ ``the derived model satisfies $\neg\psi[s^*] + s^* = i_{\P^\omega_\omega,\infty}(s)$".
\end{center}
By elementarity of $j_\omega$ and the fact that $j_\omega(s)=s$, we get
\begin{center}
$\M_{\alpha,\infty} \vDash 1 \Vdash$ ``the derived model satisfies $\neg\psi[i_{\P^\omega_\omega,\infty}(s)]$". 
\end{center}
Contradiction. The other direction is proved similarly.
\end{proof}
\begin{remark}\label{UniformDef}
The right hand side of $(\textasteriskcentered)$ can be defined in $\mathcal{H}$ from $\Sigma_\alpha$ uniformly in $\Sigma_\alpha$. This is because the right hand side of $(\textasteriskcentered)$ is equivalent to the statement: in the derived model of $L[\mathcal{H}]$ at the supremum of its Woodin cardinals, the model $L(\Sigma_\alpha,\mathbb{R}^*) \vDash \psi[i^{\Sigma_\alpha}_{\mathcal{H}(\alpha),\infty}]$, where $\mathbb{R}^*$ is the Col$(\omega,<\Theta)$-symmetric reals. This, in turns, is because we can do an $\mathbb{R}^*$-genericity iterations of $\M_{\alpha,\infty}$ in Col$(\omega,<\Theta)$. 
\end{remark}
Recall from \cite{trang2013hod} the following version of the Vopenka algebra. For each $\alpha<\lambda$, let $\mathbb{P}^*_\alpha$ be the boolean algebra $(\{A\subseteq \powerset(\xi)^n \ | \ n<\omega \wedge \xi<\theta_\alpha\wedge \exists B\in\Omega\ A \textrm{ is } OD^{L(B,\mathbb{R})}\},\subseteq)$; let $\mathbb{P}_\alpha\in \mathcal{H}\cap \powerset(\theta_\alpha)$ be the isomorphic copy of $\mathbb{P}_\alpha^*$. It's clear that for each $\alpha$, $\mathbb{P}_\alpha^*$ and $\mathbb{P}_\alpha$ are $OD$ in $L(\powerset_{\theta_\beta})(\mathbb{R})$ for any $\beta>\alpha$ and the definition is uniform in $\alpha$. Furthermore, for $\alpha<\beta$, there is a natural embedding of $\mathbb{P}_\alpha^*$ into $\mathbb{P}_\beta^*$ (and hence from $\mathbb{P}_\alpha$ into $\mathbb{P}_\beta$) and these embeddings are also $OD$ in $L(\powerset_{\theta_\gamma}(\mathbb{R}))$ for any $\gamma>\beta$ and again, the definition is uniform in $\alpha,\beta$. Let $\mathbb{P}$ be the direct limit of the $\mathbb{P}_\alpha$'s under the natural embeddings. The following corollary of Lemma \ref{DMResemblance} shows that $\mathbb{P}\in L[\mathcal{H}]$. We note that in the corollary below, the language of the structure $L[\mathcal{H}]$ has the predicate for the sequence of strategies $\{\Sigma_\alpha \ | \ \alpha<\lambda\}$.

\begin{corollary}\label{PIsIn}
For each $\alpha < \lambda$, $\mathbb{P}_\alpha$ is definable in $L[\mathcal{H}]$ from $\{\theta_{\alpha+1},\Sigma_{\alpha+1}\}$; the definition is uniform in $\alpha$. Similarly, for $\alpha<\beta$, the natural embedding from $\mathbb{P}_\alpha$ into $\mathbb{P}_\beta$ is definable in $L[\mathcal{H}]$ uniformly in $\{\theta_{\alpha+1},\theta_{\beta+1},\Sigma_{\alpha+1},\Sigma_{\beta+1}\}$. Consequently, $\mathbb{P}\in L[\mathcal{H}]$.
\end{corollary}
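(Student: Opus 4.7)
The plan is to leverage Lemma \ref{DMResemblance} and Remark \ref{UniformDef} in order to internally evaluate $OD$-truth in the models $L(B,\mathbb{R})$ relevant to defining $\mathbb{P}_\alpha^*$, inside $L[\mathcal{H}]$, using only $\theta_{\alpha+1}$ and $\Sigma_{\alpha+1}$ as additional parameters. First I would argue that every element $A$ of the Boolean algebra $\mathbb{P}_\alpha^*$ is actually of the form $A = \{x \in \powerset(\xi)^n : L(\Sigma_{\alpha+1},\mathbb{R}) \vDash \psi[x,\vec\eta]\}$ for some formula $\psi$ and some ordinal tuple $\vec\eta \in (\theta_{\alpha+1})^{<\omega}$. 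This uses the definition of the Solovay sequence: since $\xi < \theta_\alpha$ and $A$ is $OD^{L(B,\mathbb{R})}$ for some $B\in\Omega^*$, the code of $A$ is $OD$ from ordinals $<\theta_{\alpha+1}$ in a level sitting Wadge-below $\Sigma_{\alpha+1}$ (in the sense that $(\mathcal{H}(\alpha+1),\Sigma_{\alpha+1})$ is a hod pair with $\Gamma(\mathcal{H}(\alpha+1),\Sigma_{\alpha+1}) = \powerset_{\theta_{\alpha+1}}(\mathbb{R})$, so any such $B$ is projective in $\Sigma_{\alpha+1}$ modulo a real).

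Next, applying Lemma \ref{DMResemblance} to the hod pair $(\mathcal{H}(\alpha+1),\Sigma_{\alpha+1})$, each statement $L(\Sigma_{\alpha+1},\mathbb{R}) \vDash \psi[x,\vec\eta]$ is equivalent to a statement about the derived model of $\M_{\alpha+1,\infty}$ evaluated at the image $i^{\Sigma_{\alpha+1}}_{\mathcal{H}(\alpha+1),\infty}(\vec\eta)$, together with a code of $x$. By Remark \ref{UniformDef}, this condition is definable inside $\mathcal{H}$ uniformly from $\Sigma_{\alpha+1}$ and the parameters, via passage to the derived model of $L[\mathcal{H}]$ at the sup of its Woodin cardinals and the $\Coll(\omega,{<}\Theta)$-symmetric reals. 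Consequently, working in $L[\mathcal{H}]$ we can form, uniformly in $(\psi,\vec\eta,\xi,n)$, the set $A_{\psi,\vec\eta,\xi,n}$, and $\mathbb{P}_\alpha^*$ is precisely the collection of these sets (ordered by inclusion). Using any canonical $L[\mathcal{H}]$-definable coding of these objects by ordinals below $\theta_\alpha$ (e.g.\ via the $L[\mathcal{H}]$-least wellordering), the isomorphic copy $\mathbb{P}_\alpha \in \mathcal{H} \cap \powerset(\theta_\alpha)$ becomes definable in $L[\mathcal{H}]$ from $\{\theta_{\alpha+1},\Sigma_{\alpha+1}\}$, uniformly in $\alpha$.

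For the natural embedding $\mathbb{P}_\alpha \to \mathbb{P}_\beta$ (for $\alpha < \beta < \lambda$), the map at the level of $\mathbb{P}_\alpha^*$ and $\mathbb{P}_\beta^*$ is just the inclusion, since any $A\in \mathbb{P}_\alpha^*$ (being $OD^{L(B,\mathbb{R})}$ with Wadge rank of $B$ bounded by $\theta_{\alpha+1}<\theta_{\beta+1}$) also belongs to $\mathbb{P}_\beta^*$. Hence on the coded copies, the embedding sends the ordinal coding $A$ in the $L[\mathcal{H}]$-enumeration relative to $(\psi,\vec\eta,\xi,n)$ for $\Sigma_{\alpha+1}$ to the ordinal coding the same $A$ relative to the enumeration for $\Sigma_{\beta+1}$. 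Since both enumerations are $L[\mathcal{H}]$-definable uniformly in their parameters, and since translating between them only requires evaluating the two derived-model formulas and comparing, this yields a definition of the embedding in $L[\mathcal{H}]$ from $\{\theta_{\alpha+1},\theta_{\beta+1},\Sigma_{\alpha+1},\Sigma_{\beta+1}\}$. Finally, $\mathbb{P}$ is the direct limit of this system of embeddings, and since the entire system is uniformly $L[\mathcal{H}]$-definable from the sequence $\langle \theta_{\alpha+1},\Sigma_{\alpha+1} : \alpha<\lambda\rangle$ (which lies in $L[\mathcal{H}]$), we conclude $\mathbb{P} \in L[\mathcal{H}]$.

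The main obstacle I expect is the first step: pinning down that the $OD$-in-$L(B,\mathbb{R})$ sets appearing in $\mathbb{P}_\alpha^*$ can uniformly be taken with $B$ of Wadge rank $<\theta_{\alpha+1}$, so that a single hod pair $(\mathcal{H}(\alpha+1),\Sigma_{\alpha+1})$ suffices to apply Lemma \ref{DMResemblance}. Once this reflection is in hand, the rest is a mechanical unfolding of the uniform definability supplied by Remark \ref{UniformDef} together with a routine ordinal-coding argument inside $L[\mathcal{H}]$.
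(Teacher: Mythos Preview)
Your approach is correct and rests on the same two ingredients as the paper (Lemma~\ref{DMResemblance} and Remark~\ref{UniformDef}), but you take a more roundabout route. The paper does not reconstruct $\mathbb{P}_\alpha^*$ element by element. Instead it exploits the fact, stated just before the corollary, that $\mathbb{P}_\alpha$ is already a single element of $\mathcal{H}(\beta)$ for any $\beta>\alpha$; taking $\beta=\alpha+1$, one applies Lemma~\ref{DMResemblance} once to the first-order statement ``$y$ is the Vop\v{e}nka algebra at $\theta_\alpha$''. This pins down $\mathbb{P}_\alpha$ as the unique $y\in\mathcal{H}(\alpha+1)$ such that, in the derived model of $L[\mathcal{H}]$, the model $L(\Sigma_{\alpha+1},\mathbb{R}^*)$ satisfies ``$i_{\mathcal{H}(\alpha+1),\infty}(y)$ is the Vop\v{e}nka algebra at $i_{\mathcal{H}(\alpha+1),\infty}(\theta_\alpha)$''. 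No pointwise decoding of elements $x\in\powerset(\xi)^n$ or auxiliary ordinal-coding is needed.

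Your version works, but it incurs two extra obligations: first, the reflection step (your ``main obstacle'') that each $A\in\mathbb{P}_\alpha^*$ is already $\mathrm{OD}$ in $L(\Sigma_{\alpha+1},\mathbb{R})$---this is exactly the fact stated before the corollary, that $\mathbb{P}_\alpha^*$ is $\mathrm{OD}$ in $L(\powerset_{\theta_\beta}(\mathbb{R}))$ for any $\beta>\alpha$---and second, the need to code each $x\in\powerset(\xi)^n$ by an ordinal so that Lemma~\ref{DMResemblance} (which is stated for ordinal parameters $s$) applies pointwise. Both steps are fine, but they are subsumed by the paper's one-shot identification of $\mathbb{P}_\alpha$ as a single object satisfying a single first-order property. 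The same simplification carries over to the embeddings $\mathbb{P}_\alpha\to\mathbb{P}_\beta$.
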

\begin{proof}
We just prove the first clause, the proof of the second clause is similar. Fix any $\beta>\alpha$; let $(\Q,\Lambda),(\P,\Sigma), \M_{\beta,\infty}$ be defined as in the proof of Lemma \ref{DMResemblance} but for $\Sigma_\beta$. Note that $\P_\alpha\in \mathcal{H}(\beta)$. By Lemma \ref{DMResemblance},
\begin{center}
$L[\mathcal{H}]\vDash 1\Vdash$ in the derived model, $L(\Sigma_\beta,\mathbb{R}^*)$ satisfies ``$i_{\mathcal{H}(\beta),\infty}(\mathbb{P}_\alpha)$ is the Vopenka algebra at $i_{\mathcal{H}(\beta),\infty}(\theta_\alpha)$". 
\end{center} 
The above gives a uniform definition of $\mathbb{P}_\alpha$ from $\{\theta_\beta,\Sigma_\beta\}$ inside $L[\mathcal{H}]$ for any $\beta>\alpha$.

Clearly, the third clause follows from the first two clauses.
\end{proof}
Using Corollary \ref{PIsIn} and \cite[Theorem 4.3.19]{trang2013hod}, we can conclude that
\begin{itemize}
\item $L[\mathcal{H}](\Omega^*)$ is a symmetric extension of $L[\mathcal{H}]$ via $\mathbb{P}$.
\item $\powerset(\mathbb{R})\cap L[\mathcal{H}](\Omega^*) = \Omega^*$.
\end{itemize}
These, in particular, imply $L(\Omega^*)\cap \powerset(\mathbb{R})=\Omega^*$. This completes the proof of Theorem \ref{OmegaClosed}.

Lemma \ref{NotProjecting} shows that $V_{\Theta^\Omega}\cap L[\mathcal{H}] = |\mathcal{H}|$. In the case $L[\mathcal{H}] \vDash$ ``the set of Woodin cardinals has limit order type", let $M$ be the derived model of $L[\mathcal{H}]$ (at the supremum of $L[\mathcal{H}]$'s Woodin cardinals). Then $M\vDash \sf{AD}_\mathbb{R}$ (cf. \cite[Section 3.3]{ATHM}). This, combined with the result of the previous section, prove Theorem \ref{theorem:ADR-equiconsistency}; Theorem \ref{OmegaClosed} proves something stronger, namely, $\Omega^*$ is constructibly closed. 
\begin{lemma}\label{ADRDC}
If $\sf{DC}$ holds, then cof$(\Theta^*)>\omega$ and $L[\mathcal{H}](\Omega^*) \vDash \sf{AD}_\mathbb{R} + \sf{DC}$.
\end{lemma}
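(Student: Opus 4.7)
The plan is to split the argument into two main claims: first, cof$(\Theta^*)^V > \omega$; second, $L[\mathcal{H}](\Omega^*) \vDash \sf{AD}_\mathbb{R} + \sf{DC}$.

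\textbf{Part 1 (the main obstacle): cof$(\Theta^*)^V > \omega$.} Since the map $\alpha \mapsto \theta^*_\alpha$ is continuous and increasing, $\Theta^* = \sup_{\alpha < \lambda}\theta^*_\alpha$ and so cof$^V(\Theta^*) = $ cof$^V(\lambda)$. We are in the case where $\lambda$ is a limit ordinal with $\lambda \le \omega_1$, and $\sf{DC}$ implies $\omega_1$ is regular, so cof$(\lambda) > \omega$ iff $\lambda = \omega_1$. Suppose for contradiction $\lambda < \omega_1$. Using $\sf{DC}^V$, fix a sequence $\langle \alpha_n : n<\omega\rangle$ cofinal in $\lambda$ and form $\Psi = \bigoplus_{n<\omega}\Sigma_{\alpha_n}$, a single set of reals in $V$. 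Each $\Sigma_{\alpha_n}$ is an $\Omega^*$-fullness preserving hod strategy with branch condensation, and since these strategies extend one another up the hod hierarchy, the join $\Psi$ can be repackaged as a $\Sigma_{\alpha_0}$-CMI operator in the sense of Definition \ref{cmi operator}. I will then apply the full CMI machinery of Sections \ref{OneStep}, \ref{CMIInsideLp}, and \ref{section:Theta-gt-theta-Sigma} to $\Psi$: the successor step (Lemma \ref{lemma-not-square-implies-M1Fsharp}) and the gap-in-scales and end-of-gap analyses yield a maximal model $L(\Omega_\Psi, \mathbb{R}) \vDash \sf{AD}^+ + \Theta = \theta_\Psi$ that contains $\Psi$ as a set of reals. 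By downward absoluteness of $\sf{AD}^+$, $L(\Psi, \mathbb{R}) \vDash \sf{AD}^+$, hence $\Psi \in \Omega^*$. But the Wadge rank of $\Psi$ majorizes the Wadge ranks of the $\Sigma_{\alpha_n}$'s, which are cofinal in $\Theta^*$; so the Wadge rank of $\Psi$ in $\Omega^*$ would be at least $\Theta^*$, contradicting $\Psi \in \Omega^*$.

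\textbf{Part 2: $L[\mathcal{H}](\Omega^*) \vDash \sf{AD}_\mathbb{R}$.} This is almost immediate from the paragraph preceding the lemma. Since $\lambda$ is a limit ordinal, the set of Woodin cardinals of $L[\mathcal{H}]$ has limit order type with supremum $\Theta^*$, and the derived model $M$ of $L[\mathcal{H}]$ at $\Theta^*$ satisfies $\sf{AD}_\mathbb{R}$. The standard derived model analysis applied to the direct limit $\mathcal{H}$ gives $\powerset(\mathbb{R})^M = \Omega^*$, and since $\mathcal{H}$ is OD from $\Omega^*$, one has $L[\mathcal{H}](\Omega^*) = L(\Omega^*, \mathbb{R}) = M$, which satisfies $\sf{AD}_\mathbb{R}$.

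\textbf{Part 3: $L[\mathcal{H}](\Omega^*) \vDash \sf{DC}$.} I will invoke Solovay's theorem \cite{solovay1978independence} inside $L[\mathcal{H}](\Omega^*)$: under $\sf{ZF} + \sf{AD}_\mathbb{R}$, $\sf{DC}$ is equivalent to cof$(\Theta) > \omega$. Since $L[\mathcal{H}](\Omega^*) \subseteq V$, any cofinal $\omega$-sequence in $\Theta^*$ belonging to $L[\mathcal{H}](\Omega^*)$ would already witness cof$^V(\Theta^*) = \omega$, but Part 1 rules this out. Hence cof$^{L[\mathcal{H}](\Omega^*)}(\Theta^*) \ge $ cof$^V(\Theta^*) > \omega$, and Solovay's theorem applied in $L[\mathcal{H}](\Omega^*)$ (justified by Part 2) yields $\sf{DC}$ there.

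The hardest step is Part 1, and specifically the verification that $\Psi$, being only an $\omega$-join of coherent hod strategies (not itself a hod pair strategy), can nevertheless serve as the base of the core model induction. The key point is that the standing strong-compactness and anti-square hypotheses on $\omega_1$ drive the induction at every successor and limit stage uniformly for operators built from $\Psi$, so the gap-in-scales analysis of Sections \ref{CMIInsideLp} and \ref{section:Theta-gt-theta-Sigma} goes through with $\Psi$ in the role played by $\Sigma$ there, producing an $\sf{AD}^+$ model which absorbs $\Psi$ and contradicts the maximality of $\Omega^*$.
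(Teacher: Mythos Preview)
Your Parts~2 and~3 are fine and match the paper's treatment: once cof$(\Theta^*)>\omega$ is known, the derived-model analysis together with $\powerset(\mathbb{R})\cap L[\mathcal{H}](\Omega^*)=\Omega^*$ and Solovay's equivalence $\sf{DC}\Leftrightarrow\text{cof}(\Theta)>\omega$ under $\sf{AD}_\mathbb{R}$ finish the job.

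The difficulty is in Part~1, and your execution has a genuine gap. The objects $\Sigma_{\alpha_n}=\Sigma^{\mathcal{H}}_{\alpha_n}$ are the \emph{internal} strategies for the layers $\mathcal{H}(\alpha_n)$ of the uncountable direct limit $\mathcal{H}$; they are not strategies for countable hod premice in $V$, so the join $\Psi=\bigoplus_n\Sigma_{\alpha_n}$ is not a priori a set of reals at all. Even if you instead choose (via $\sf{DC}$) countable hod pairs $(\Q_n,\Lambda_n)\in\Omega^*$ with $\Gamma(\Q_n,\Lambda_n)=\powerset_{\theta^*_{\alpha_n}}(\mathbb{R})$ and set $\Psi=\bigoplus_n\textrm{Code}(\Lambda_n)$, this $\Psi$ does \emph{not} fit Definition~\ref{cmi operator}: a $\Sigma$-cmi operator is either a nice mouse operator or a single iteration strategy for a $1$-suitable premouse guided by a self-justifying system. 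An $\omega$-join of hod strategies is neither, and the paper's machinery in Sections~\ref{OneStep}--\ref{section:Theta-gt-theta-Sigma} never constructs such a thing. The induction produces new operators one Solovay step at a time; it gives no mechanism for leaping over a limit of Solovay levels in a single bound, and your final paragraph does not supply one.

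The paper's argument avoids this by packaging everything into a single countable hod premouse. One takes a countable $\sigma\prec M$ (with $M$ large enough to contain $\mathcal{H}^+=L[\mathcal{H}]|\gamma$ and $\Omega^*$) that is \emph{cofinal} in $\Theta^*$, which is possible exactly because cof$(\Theta^*)=\omega$. The collapse gives a countable $\mathcal{H}_\sigma$, and the $\pi_\sigma$-realizable strategy $\Sigma_\sigma$ (below $\Theta_\sigma$ it is just $\Sigma_\sigma^-=\oplus_{\alpha<\lambda_\sigma}\Sigma_{\mathcal{H}_\sigma(\alpha)}$, since cof$(\Theta^*)=\omega$ means no measurable-cofinality issue arises) is, after passing to an iterate, a single hod strategy with branch condensation. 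Now the argument of Lemma~\ref{NotProjecting} applies verbatim to place $\Sigma_\sigma\in\Omega^*$. The contradiction is then that the direct-limit map $i$ agrees with $\pi_\sigma$ on $\Theta_\sigma$, hence is cofinal in $\Theta^*$, forcing $\M_\infty|\delta^{\M_\infty}=\mathcal{H}$ and contradicting $\Sigma_\sigma\in\Omega^*$. If you try to make your approach precise (coherently thread the $(\Q_n,\Lambda_n)$ into a single limit hod premouse), you will find yourself essentially reproducing this hull construction.
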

\begin{proof}
Suppose cof$(\Theta^*) = \omega$. Let $M$ be a transitive structure containing $\mathcal{H}^+\cup \Omega^*$, where $\mathcal{H}^+=L[\mathcal{H}]|\gamma$, where $\gamma>\Theta^*$ is a regular cardinal in $L[\mathcal{H}]$. Let $\sigma\prec M$ be countable such that $\sigma$ is cofinal in $\Theta^*$. Now the $\pi_\sigma$-realizable strategy $\Sigma_\sigma$ defined in the proof of Lemma \ref{NotProjecting} acts on $\pi^{-1}_\sigma(\mathcal{H}^+)$. $\Sigma_\sigma$ on stacks below $\Theta_\sigma$ is simply $\Sigma_\sigma^-$ in this case; by replacing $(\N_\sigma,\Sigma_\sigma)$ by an iterate, we may assume $\Sigma_\sigma$ has branch condensation. We can show then that $\Sigma_\sigma\in \Omega^*$ as before. Furthermore, letting $i$ be the direct limit map from $\pi_\sigma^{-1}(\mathcal{H}^+)$ into the direct limit $\M_\infty$ of all of its $\Sigma_\sigma$-iterates in $\Omega^*$, then by elementarity
\begin{center}
$\pi_\sigma \rest \Theta_\sigma = i\rest \Theta_\sigma$.
\end{center}
So $i$ is cofinal in $\Theta^*$. This means $\M_\infty|\delta^{\M_\infty} = \mathcal{H}\notin \Omega^*$. But this contradicts the fact that $\Sigma_\sigma\in \Omega^*$.

The second clause follows immediately from the first and the remarks above.

\end{proof}

Lemma \ref{ADRDC} completes the proof of the following theorems.
\begin{theorem}[$\sf{ZF+DC}_\mathbb{R}$]\label{ADR}
Suppose $\Omega^*=\{A\subseteq \mathbb{R} \ | \ L(A,\mathbb{R})\vDash \sf{AD}^+\}$ and $(\dag^+)$ holds. Suppose $\Omega^*\neq \emptyset$ and for every suitable pair $(\P,\Sigma)$ or hod pair $(\P,\Sigma)$ such that $\Sigma$ has branch condensation and is $\Omega^*$-fullness preserving, $\Sigma\in \Omega^*$. If the Solovay sequence of $\Omega^*$ has limit length, then $\Omega^* = L(\Omega^*,\mathbb{R})\cap \powerset(\mathbb{R})$ and $L(\Omega^*,\mathbb{R})\vDash \sf{AD}_\mathbb{R}$.
\end{theorem}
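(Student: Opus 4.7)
The plan is to invoke Theorem \ref{OmegaClosed} for the constructible closure clause and then extract $\sf{AD}_\mathbb{R}$ from the direct-limit/derived-model machinery already set up in the preceding pages. First, observe that all the standing hypotheses of Theorem \ref{OmegaClosed} are in force: we have $\sf{ZF}+\sf{DC}_\mathbb{R}+(\dag^+)$. Since we are in the limit-Solovay-sequence case, the successor-case argument at the beginning of Section \ref{omega_closed} does not apply, so we are directly in the limit case treated by Lemmas \ref{NotProjecting}--\ref{PIsIn} and the displayed bullet points following Corollary \ref{PIsIn}. Those give exactly $L(\Omega^*)\cap\powerset(\mathbb{R})=\Omega^*$, and since $\Omega^*\neq\emptyset$ implies $\mathbb{R}\in L(\Omega^*)$, we obtain $L(\Omega^*,\mathbb{R})\cap\powerset(\mathbb{R})=\Omega^*$, which is the first conclusion.

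For the $\sf{AD}_\mathbb{R}$ clause, I would use the direct limit hod $\mathcal{H}$ constructed in Section \ref{omega_closed}. Because the Solovay sequence has limit length $\lambda$, the ordinal height $\Theta^*$ of $\mathcal{H}$ is a limit of Woodin cardinals of $L[\mathcal{H}]$ (namely the $\delta_\alpha$'s for $\alpha<\lambda$). Next, combining Lemma \ref{NotProjecting}, Lemma \ref{DMResemblance}, Remark \ref{UniformDef}, and Corollary \ref{PIsIn}, we have that $L[\mathcal{H}](\Omega^*)$ is the symmetric $\Coll(\omega,{<}\Theta^*)$-extension of $L[\mathcal{H}]$ via the Vopenka algebra $\mathbb{P}\in L[\mathcal{H}]$, and that the sets of reals of this extension are exactly $\Omega^*$.

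The key remaining step is to identify $L(\Omega^*,\mathbb{R})$ with the derived model of $L[\mathcal{H}]$ at $\Theta^*$, and then to invoke Woodin's derived-model theorem at a limit of Woodin cardinals to conclude $\sf{AD}_\mathbb{R}$ in that derived model. The inclusion of the derived model into $L(\Omega^*,\mathbb{R})$ uses that every set of reals in the derived model is definable from a tail strategy $\Sigma_\alpha$ with parameters and hence belongs to $\Omega^*$ by the hypothesis that every such strategy with branch condensation and $\Omega^*$-fullness preservation is in $\Omega^*$; the reverse inclusion uses Lemma \ref{DMResemblance} to realize each $A\in\Omega^*$, coded via some $\Sigma_\alpha$, inside the derived model. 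Once these are matched, real games in $L(\Omega^*,\mathbb{R})$ are determined by the standard derived-model computation: translate a real game into an integer game on an iterate $\M_{\alpha,\infty}$ absorbing the real moves as generic reals over the next Woodin cardinal of $\mathcal{H}$, which exists since $\lambda$ is a limit.

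The main obstacle is the derived-model identification at the limit of Woodins, specifically checking that the sets of reals of the symmetric extension coincide with those defined inside the derived model construction; this requires carefully threading through the uniform definability given by Remark \ref{UniformDef} so that the Vopenka-symmetric picture and the $\Col(\omega,{<}\Theta^*)$-symmetric picture produce the same $\powerset(\mathbb{R})$. Note that, in contrast with Lemma \ref{ADRDC}, we do not need $\mathrm{cof}(\Theta^*)>\omega$ here because we are not asserting $\sf{DC}$ in $L(\Omega^*,\mathbb{R})$; $\sf{AD}_\mathbb{R}$ alone follows from having a limit of Woodins in $\mathcal{H}$ together with the symmetric-extension computation, and that is all the weakened hypothesis $\sf{DC}_\mathbb{R}$ is being asked to support.
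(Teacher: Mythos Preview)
Your proposal is correct and follows essentially the same approach as the paper. The paper's proof of Theorem~\ref{ADR} is the entire build-up of Section~\ref{omega_closed}: Theorem~\ref{OmegaClosed} gives the constructible-closure clause, and the short paragraph before Lemma~\ref{ADRDC} handles $\sf{AD}_\mathbb{R}$ by observing that Lemma~\ref{NotProjecting} forces $V_{\Theta^*}\cap L[\mathcal{H}]=|\mathcal{H}|$, so $L[\mathcal{H}]$ has a limit of Woodins, and then its derived model at $\Theta^*$ satisfies $\sf{AD}_\mathbb{R}$---with the identification of that derived model's $\powerset(\mathbb{R})$ with $\Omega^*$ deferred to \cite[Section 3.3]{ATHM} and \cite[Theorem 4.3.19]{trang2013hod}. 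You unpack exactly this identification (Vopenka-symmetric vs.\ $\Coll(\omega,{<}\Theta^*)$-symmetric) using Lemmas~\ref{DMResemblance}--\ref{PIsIn}, which is more explicit than the paper but amounts to the same argument; your closing remark that $\mathrm{cof}(\Theta^*)>\omega$ is not needed here (only for the $\sf{DC}$ upgrade in Lemma~\ref{ADRDC}) is also on point.
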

\begin{theorem}[$\sf{ZF+DC}$]\label{ADR+DC}
Suppose $\Omega^*=\{A\subseteq \mathbb{R} \ | \ L(A,\mathbb{R})\vDash \sf{AD}^+\}$ and $(\dag^+)$ holds. Suppose $\Omega^*\neq\emptyset$ and for every suitable pair $(\P,\Sigma)$ or hod pair $(\P,\Sigma)$ such that $\Sigma$ has branch condensation and is $\Omega^*$-fullness preserving, $\Sigma\in \Omega^*$. If the Solovay sequence of $\Omega^*$ has limit length, then $L(\Omega^*,\mathbb{R})\cap \powerset(\mathbb{R})=\Omega^*$ and $L(\Omega^*,\mathbb{R})\vDash \sf{AD}_\mathbb{R} + \sf{DC}$.
\end{theorem}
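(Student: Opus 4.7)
The plan is to derive this theorem essentially as a corollary by assembling the results proved earlier in this section, namely Theorem \ref{OmegaClosed}, the symmetric-extension analysis following Corollary \ref{PIsIn}, and Lemma \ref{ADRDC}. Since the hypotheses of Theorem \ref{ADR+DC} are those of Theorem \ref{ADR} plus full $\mathsf{DC}$, the first two conclusions of Theorem \ref{OmegaClosed} (and its proof) will immediately give $\powerset(\mathbb{R})\cap L(\Omega^*,\mathbb{R}) = \Omega^*$ via the Vop\v{e}nka-style analysis: $L[\mathcal{H}](\Omega^*)$ is a symmetric extension of $L[\mathcal{H}]$ by $\mathbb{P}$, so its sets of reals coincide with $\Omega^*$, and since $\mathcal{H}$ is coded by the sequence $\langle\Sigma_\alpha:\alpha<\lambda\rangle$ (each $\Sigma_\alpha\in\Omega^*$ by hypothesis), the models $L[\mathcal{H}](\Omega^*)$ and $L(\Omega^*,\mathbb{R})$ agree on $\powerset(\mathbb{R})$.

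Next I would invoke Lemma \ref{ADRDC}. Since the Solovay sequence of $\Omega^*$ has limit length and $\mathsf{DC}$ holds in $V$, the lemma yields $\cf(\Theta^*) > \omega$ and
\[L[\mathcal{H}](\Omega^*)\models \mathsf{AD}_\mathbb{R} + \mathsf{DC}.\]
To transfer this to $L(\Omega^*,\mathbb{R})$, I would argue that the two models have the same sets of reals (by the previous paragraph) and the same hod analysis, so $\mathsf{AD}_\mathbb{R}$ passes between them. For $\mathsf{AD}_\mathbb{R}$ this is automatic because it is a statement about sets of reals and real games.

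The only genuine issue is verifying $\mathsf{DC}$ in $L(\Omega^*,\mathbb{R})$, and this is where I would spend the bulk of the argument. The main obstacle is that $L(\Omega^*,\mathbb{R})$ is not, a priori, a $V$-generic extension or symmetric submodel in an obvious way, so one cannot simply quote absoluteness of $\mathsf{DC}$. However, $\mathsf{DC}$ holds in $V$ by hypothesis, and every set in $L(\Omega^*,\mathbb{R})$ is a surjective image of $\Ord\times\mathbb{R}\times\Omega^*$-sequences; combined with $\cf(\Theta^*)>\omega$ (so there is a cofinal $\omega_1$-sequence through the Solovay sequence living in $L(\Omega^*,\mathbb{R})$), a standard argument (cf.\ Solovay \cite{solovay1978independence}) shows $\mathsf{DC}$ is equivalent to $\cf(\Theta^*)\ge\omega_1$ under $\mathsf{AD}_\mathbb{R}$, which we have just established. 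Alternatively, since $L[\mathcal{H}](\Omega^*)\models\mathsf{DC}$ and this model contains all countable sequences from $\Omega^*\cup\mathbb{R}\cup\Ord$ relevant for choice sequences living in $L(\Omega^*,\mathbb{R})$, $\mathsf{DC}$ transfers directly.

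The hard part will be ensuring this last transfer is clean: one needs to check that any $\mathsf{DC}$-sequence required in $L(\Omega^*,\mathbb{R})$ can be found in $L[\mathcal{H}](\Omega^*)$, which amounts to showing that $L(\Omega^*,\mathbb{R})$ and $L[\mathcal{H}](\Omega^*)$ have the same countable sequences. This follows from the fact that every element of $L(\Omega^*,\mathbb{R})$ is definable from a real, an ordinal, and a set in $\Omega^*$, and the corresponding definable closure in $L[\mathcal{H}](\Omega^*)$ agrees. With this verification, both conclusions hold and the proof is complete.
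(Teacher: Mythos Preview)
Your proposal is correct and follows the same route as the paper, which simply states that Lemma \ref{ADRDC} (together with Theorem \ref{OmegaClosed} and the symmetric-extension analysis) completes the proof. Your only overcomplication is the discussion of transferring $\mathsf{DC}$: the Solovay argument you give first already suffices cleanly, and in fact the models $L(\Omega^*,\mathbb{R})$ and $L[\mathcal{H}](\Omega^*)$ are equal, since $\mathcal{H}$ is definable from $\Omega^*$ (as the direct limit of hod pairs in $\Omega^*$) and conversely $\mathbb{R},\Omega^*\in L[\mathcal{H}](\Omega^*)$---so the subsequent worry about matching countable sequences is unnecessary.
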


The above theorems and results of the previous section complete the proof of Theorems \ref{theorem:ADR-equiconsistency} and \ref{theorem:ADR-DC-equiconsistency}.

\section{FURTHER RESULTS, QUESTIONS, AND OPEN PROBLEMS}

We first mention a few natural questions regarding possible weakenings of the hypotheses of Theorems \ref{theorem:AD-equiconsistency} and \ref{theorem:ADR-DC-equiconsistency}. (In some cases one could also formulate versions with fragments of $\mathsf{DC}$ along the lines of \ref{theorem:ADR-equiconsistency}.)

\begin{question}
What are the consistency strengths of the following theories:
\begin{enumerate}
\item $\sf{ZF} + \sf{DC} + ``\omega_1$ is $\omega_2$-strongly compact''?
\item $\sf{ZF} + \sf{DC} + ``\omega_1$ is $\Theta$-strongly compact''?
\end{enumerate}
Are they equiconsistent with $\mathsf{ZF} + \mathsf{DC} + \mathsf{AD}$
and $\mathsf{ZF} + \mathsf{DC} + \mathsf{AD}_\mathbb{R}$ respectively?
\end{question}

One could try to weaken the compactness hypotheses further:

\begin{question}
What are the consistency strengths of the following theories:
\begin{enumerate}
\item $\sf{ZF} + \sf{DC} + {}$``$\omega_1$ is threadable and $\neg \square_{\omega_1}$''?
\item\label{item:threadable-DC} $\sf{ZF} + \sf{DC} + {}$``every uncountable regular cardinal $\le \Theta$ is threadable''?
\end{enumerate}
Are they equiconsistent with $\mathsf{ZF} + \mathsf{DC} + \mathsf{AD}$
and $\mathsf{ZF} + \mathsf{DC} + \mathsf{AD}_\mathbb{R}$ respectively?
\end{question}

However, it may be overly ambitious at present to seek a positive answer especially in case
\ref{item:threadable-DC}; one could try to answer the following question first:

\begin{question}
What is the consistency strength of the theory
$\sf{ZF} + \sf{DC} + ``\omega_1$ is $\mathbb{R}$-strongly compact'' + ``$\Theta$ is singular or threadable''?
Is it equiconsistent with $\mathsf{ZF} + \mathsf{DC} + \mathsf{AD}_\mathbb{R}$?
\end{question}

We mention a corollary of the proof of Theorem \ref{theorem:AD-equiconsistency}.
\begin{theorem}\label{theorem:Theta_omega_2}
The following theories are equiconsistent:
\begin{enumerate}
\item \label{item:AD}$\sf{ZF+DC+AD}$
\item \label{item:Theta_bigger_omega_2}$\sf{ZF+DC} + \Theta>\omega_2 + ``\omega_1$ is $\mathbb{R}$-strongly compact."
\end{enumerate}
\end{theorem}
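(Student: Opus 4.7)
The plan is to show that each direction reduces easily to Theorem \ref{theorem:AD-equiconsistency}, via the observation that the hypothesis $\Theta > \omega_2$ together with $\mathbb{R}$-strong compactness of $\omega_1$ is enough to recover the $\omega_2$-strong compactness hypothesis of item \eqref{item-omega-2-strongly-compact} of that theorem.

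For the direction \eqref{item:AD} $\implies$ \eqref{item:Theta_bigger_omega_2}, I would simply observe that under $\sf{AD}$, Martin's cone measure witnesses that $\omega_1$ is $\mathbb{R}$-strongly compact, and Moschovakis's coding lemma implies $\Theta$ is strongly inaccessible, in particular $\Theta > \omega_2$. (Alternatively, this direction is also immediate from Theorem \ref{theorem:AD-equiconsistency}, since $\sf{AD}$ yields all the compactness needed and trivially $\Theta > \omega_2$.)

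For the harder direction $\Con\eqref{item:Theta_bigger_omega_2} \implies \Con\eqref{item:AD}$, the key reduction is as follows. Assume \eqref{item:Theta_bigger_omega_2}. Since $\omega_2 < \Theta$, there is by definition of $\Theta$ a surjection $f: \mathbb{R} \twoheadrightarrow \omega_2$. Let $\mu$ be a fine, countably complete measure on $\powerset_{\omega_1}(\mathbb{R})$ witnessing $\mathbb{R}$-strong compactness of $\omega_1$, and push it forward along the map $\sigma \mapsto f[\sigma]$ from $\powerset_{\omega_1}(\mathbb{R})$ to $\powerset_{\omega_1}(\omega_2)$; the resulting measure is easily checked to be fine and countably complete, so $\omega_1$ is $\omega_2$-strongly compact. (This is the elementary fact noted in the introduction that $X$-strong compactness transfers along surjections $X \twoheadrightarrow Y$.) Thus hypothesis \eqref{item-omega-2-strongly-compact} of Theorem \ref{theorem:AD-equiconsistency} holds, and the equiconsistency stated there yields $\Con(\sf{ZF+DC+AD})$.

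I expect there to be no real obstacle here: the argument is a one-step reduction to Theorem \ref{theorem:AD-equiconsistency}, and the only point that requires any care is checking that $\omega_2 < \Theta$ literally produces a surjection $\mathbb{R} \twoheadrightarrow \omega_2$ (which is immediate from the definition of $\Theta$ as the least ordinal that is not a surjective image of $\mathbb{R}$). All the substantive core model induction work has already been done in the proof of Theorem \ref{theorem:AD-equiconsistency} and is absorbed by the citation.
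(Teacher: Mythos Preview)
Your proposal is correct and essentially matches the paper's own proof. The only cosmetic difference is that you reduce the hard direction to item \eqref{item-omega-2-strongly-compact} of Theorem \ref{theorem:AD-equiconsistency} (via the push-forward surjection $\mathbb{R}\twoheadrightarrow\omega_2$), whereas the paper phrases the reduction as landing at item \eqref{item-not-square-omega-1}; since the proof of Theorem \ref{theorem:AD-equiconsistency} already establishes \eqref{item-omega-2-strongly-compact} $\implies$ \eqref{item-not-square-omega-1}, these are the same argument up to which step in the chain you cite.
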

\begin{proof}
\eqref{item:AD} $\implies$ \eqref{item:Theta_bigger_omega_2}: As explained above, ``$\omega_1$ is $\mathbb{R}$-strongly compact" is a consequence of the existence of the Turing cone measure, which follows from $\sf{AD}$. $\Theta>\omega_2$ follows from the Moschovakis coding lemma.

\eqref{item:Theta_bigger_omega_2} $\implies$ \eqref{item:AD}: Using results in Section \ref{section:intro}, it's easy to see that the hypothesis of \eqref{item:Theta_bigger_omega_2} implies the hypothesis of \ref{item-not-square-omega-1} of Theorem \ref{theorem:AD-equiconsistency}.
\end{proof}

If we strengthen the hypothesis of \eqref{item:Theta_bigger_omega_2} to ``$\omega_1$ is $\mathbb{R}$-supercompact", then we obtain an equiconsistency with ``there are $\omega^2$ many Woodin cardinals", which is strictly stronger than \eqref{item:AD} (or equivalently \ref{item:Theta_bigger_omega_2}) of Theorem \ref{theorem:Theta_omega_2}. This is a result of Woodin (see \cite{trang2012structureLRmu}). Similarly, if we strengthen the hypothesis in \ref{item-PR-strongly-compact-and-DC} of Theorem \ref{theorem:ADR-DC-equiconsistency} to $\sf{ZF+DC} + ``\omega_1$ is $\powerset(\mathbb{R})$-supercompact"\footnote{We say that $\omega_1$ is $X$-supercompact if there is a countably complete, fine, normal measure $\mu$ on $\powerset_{\omega_1}(X)$. $\mu$ is normal on $\powerset_{\omega_1}(X)$ if whenever $F:\powerset_{\omega_1}(X)\rightarrow \powerset_{\omega_1}(X)$ is such that $\{\sigma \mid F(\sigma)\subseteq \sigma \wedge F(\sigma)\neq\emptyset\}\in\mu$ then there is some $x\in X$ such that the set $\{\sigma \mid x\in F(\sigma)\}\in \mu$.} then one obtains the sharp for a model of $\sf{AD}_\mathbb{R} + \sf{DC}$. To see this, note that from the result of Theorem \ref{theorem:ADR-DC-equiconsistency}, we get a model $L(\Omega^*,\mathbb{R})\vDash \sf{AD}_\mathbb{R}+\sf{DC}$, where $\Omega^*\subseteq \powerset(\mathbb{R})$. Fix a countably complete, fine, normal measure $\mu$ on $\powerset_{\omega_1}(\powerset(\mathbb{R}))$. Then note that by normality, 
\begin{center}
$\forall^*_{\mu}\sigma \ M_\sigma=L(\Omega^*_\sigma,\mathbb{R}_\sigma)\vDash \sf{AD}_\mathbb{R}+\sf{DC}$,
\end{center} 
where we have that $\Omega^* = [\sigma\mapsto \Omega^*_\sigma]_\mu$ and $\mathbb{R}=[\sigma\mapsto\mathbb{R}_\sigma]_\mu$. Now, $\forall^*_\mu \sigma \ (\Omega^*_\sigma,\mathbb{R}_\sigma)^\sharp$ exists; by normality again, the sharp for $L(\Omega^*,\mathbb{R})$ exists.   This demonstrates that the theory $\sf{ZF+DC} + ``\omega_1$ is $\powerset(\mathbb{R})$-supercompact" is strictly stronger than $\sf{ZF+DC} + ``\omega_1$ is $\powerset(\mathbb{R})$-strongly compact".

However, we don't know the exact consistency strength of $\sf{ZF+DC} + ``\omega_1$ is $\powerset(\mathbb{R})$-supercompact".

\begin{question}
What is the exact consistency strength of $\sf{ZF+DC} + ``\omega_1$ is $\powerset(\mathbb{R})$-supercompact"?
\end{question}

We end with the following set of questions.
\begin{question}
What are the consistency strengths of the following theories:
\begin{enumerate}
\item ``$\sf{ZF + DC} + ``\omega_1$ is $\powerset(\powerset(\mathbb{R}))$-strongly compact"? 
\item ``$\sf{ZF + DC} + ``\omega_1$ is $\powerset(\powerset(\mathbb{R}))$-supercompact"? 
\item \label{item:strong_compact} ``$\sf{ZF + DC} + \omega_1$ is strongly compact"?
\item \label{item:super_compact}``$\sf{ZF + DC} + \omega_1$ is supercompact"?
\end{enumerate}
In particular, are the theories \eqref{item:strong_compact} and \eqref{item:super_compact} equiconsistent?
\end{question}

It's worth noting that Woodin (unpublished) has shown the theory ``$\sf{ZF + DC} + \omega_1$ is supercompact" is consistent relative to a proper class of Woodin limits of Woodin cardinals. We hope the techniques in this paper when combined with the theory of hod mice would allow us to make significant progress in answering these questions.

\bibliographystyle{plain}
\bibliography{Rmicebib}
\end{document}